\numberwithin{equation}{section}
\newtheorem{Thm}{Theorem}[section]
\newtheorem{Lemma}{Lemma}[section]
\newtheorem{prop}{Proposition}[section]
\newtheorem{Def}{Definition}[section]
\newtheorem*{rmk}{Remark}
\newtheorem*{rmks}{Remarks}
\newcommand{\abs}[1]{\left|#1\right|}
\newcommand{\sgn}[1]{\operatorname{sgn}\left(#1\right)}
\begin{document}
	\title{The Ginibre Ensemble Conditioned on an \linebreak  Overcrowding Event}
	\author{Offer Kopelevitch\textsuperscript{1}}
	\footnotetext[1]{School of Mathematical
		Sciences, Tel Aviv University, Tel Aviv, 69978, Israel. E-mail: Ofermoshek@mail.tau.ac.il. This research was supported by ISF Grants 1903/18 and 3537/24.}
	\maketitle
	\begin{abstract}
		We look at the eigenvalues of the complex Ginibre Ensemble of random matrices consisting of $N$ eigenvalues. We study the event that for $ {c \in [0,1]}$, $\lfloor cN \rfloor$ of the eigenvalues are located outside of a disk of radius $ R \in (\sqrt{1-c},1)$. Except for the case $c=1$ the eigenvalue process conditioned on this event is not determinantal. Nevertheless we are able to obtain asymptotic estimates of the probability of the event, and describe the conditional distribution in three spatial regions. For $\{ \lambda \in \mathbb{C} : \abs{\lambda} <R\}, \{\lambda \in \mathbb{C} :\abs{\lambda} > R+\epsilon\} $ the conditional distribution is asymptotically that of a Ginibre ensemble. Meanwhile, near the boundary of the disk, after rescaling by a factor of order $ N$, it tends to the determinantal point process that appears in the limit of the Ginibre ensemble near a hard wall in Seo \cite{CoulombGasNearWall}.

	\end{abstract}
	\section{Introduction}
	Consider the Ginibre ensemble defined by the probability distribution on $ \mathbb{C}^N $
	
	\begin{equation} \label{Ginibre def}
		\frac{1}{Z_N}  \prod_{1\leq j < k \leq N} \abs{\lambda_k -\lambda_j}^2 \prod_{j=1}^{N}  e^{-N\abs{\lambda_j}^{2}}\mathrm{d}\lambda_j 
	\end{equation}	
	where $Z_N $ is a normalisation constant.
	
	The Ginibre ensemble is the most well studied two-dimensional determinantal point process in the field of random matrices \cite[Chapter 15]{LogGases}. It is the distribution of the eigenvalues of an $N \times N$  random matrix whose entries are independent complex Gaussian variables centred around zero with variance $\frac{1}{N} $.
	
	The limiting distribution of the empirical measure of the Ginibre ensemble is the normalised Lebesgue measure on the unit disk (see Ginibre \cite{Ginibre}). In particular, for $ R \in (0,1) $, with probability one, $\frac{1}{N}\sum_{i=1}^N \mathds{1}_{\{ \abs{\lambda_i} >R \}} $ tends to $1- R^2$ as $ N $ tends to infinity. Moreover, according to the large deviation principle for the Ginibre Ensemble established by Petz and Hiai \cite{LargeDeviationsPH}, the event where the number of eigenvalues satisfying $\abs{\lambda}>R $ is $\lfloor cN \rfloor$ is a very rare event for $ c \in (1-R^2,1]$, with probability exponentially small in $N^2 $.
	
	A special case that was studied before is the hole event, corresponding to the case $c=1$. The Ginibre ensemble conditioned on this event forms a determinantal point process and there are several results about it including the leading orders of the asymptotics of the probability of that event by Charlier \cite{LargeGapAsymptotics} and the leading order of the hole probability in a general open set by Adhikari and Reddy \cite{OpenSetHoleProbability}.
	
	The overcrowding cases, where $ c \in (1-R^2,1)$, and similarly the undercrowding cases with $ c \in (0,1-R^2) $, are not determinantal and as such, dealing with them requires different tools. In this paper we take a different approach to deal with it and we get the asymptotics of the probability of the overcrowding event.
	\begin{Thm} \label{overcrowding probability}
		Let $\mathcal{G}_N$ be the Ginibre ensemble of size $N$ and assume $ R^2 >1-c $. The probability that exactly $N_c = \lfloor cN \rfloor $ eigenvalues of a size $N$ Ginibre ensemble are bigger in absolute value than $R$ satisfies 
		
		\begin{align*}
			\mathbb{P}_{\mathcal{G}_N} (\# \{\lambda: &\abs{\lambda} >R\} = N_c) = \\
			&\mathbb{P}_{\mathcal{G}_{N_{c}}} (\forall \lambda \colon \abs{\lambda}>R ) \cdot 	\sum_{l=0}^{\infty} p(l)\left(\frac{R^2}{1-c}\right)^{-l} \left(1+O\left(\frac{\log^{3}N}{N}\right)\right)
		\end{align*}
		where $p(l) $ is the partition function (see Appendix~\ref{partition function}).
	\end{Thm}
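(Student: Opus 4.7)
The plan is to reduce the non-determinantal overcrowding probability to a product of three asymptotically tractable factors through an inner/outer splitting of the eigenvalues. Labelling $N_c$ of the eigenvalues as \emph{outer} (in $A = \{\abs{\lambda}>R\}$) and $m := N-N_c$ as \emph{inner} (in $B = \{\abs{\mu}<R\}$), one obtains, up to the combinatorial factor $\binom{N}{N_c}$, a double integral whose squared Vandermonde factors as $\abs{\Delta(\lambda,\mu)}^2 = \abs{\Delta(\lambda)}^2 \abs{\Delta(\mu)}^2 \prod_{i,j}\abs{\lambda_i-\mu_j}^2$. Since $\abs{\lambda_i}>R>\abs{\mu_j}$, writing $\abs{\lambda_i-\mu_j}^2 = \abs{\lambda_i}^2\,\abs{1-\mu_j/\lambda_i}^2$ allows one to absorb $\prod_i\abs{\lambda_i}^{2m}$ into an effective outer weight, leaving the residual
\[
C(\lambda,\mu) := \prod_{i,j}\Bigl|1 - \tfrac{\mu_j}{\lambda_i}\Bigr|^2
\]
as a perturbation. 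This produces the decomposition
\[
\mathbb{P}_{\mathcal{G}_N}\bigl(\#\{\abs{\lambda}>R\}=N_c\bigr) = \binom{N}{N_c}\frac{Z_{\mathrm{out}}\,Z_{\mathrm{in}}}{Z_N}\,\langle C\rangle,
\]
where $Z_{\mathrm{out}}$ is the partition function of the $N_c$-particle radial ensemble on $A$ with weight $\abs{\lambda}^{2m}e^{-N\abs{\lambda}^2}$, $Z_{\mathrm{in}}$ that of $m$ particles on $B$ with weight $e^{-N\abs{\mu}^2}$, and $\langle\cdot\rangle$ is expectation under their product.

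Radial symmetry and orthogonality of the monomials $\{z^k\}$ on circles reduce $Z_{\mathrm{out}}$, $Z_{\mathrm{in}}$ and $Z_N$ to explicit products of upper and lower incomplete Gamma functions, as does the closed form for $\mathbb{P}_{\mathcal{G}_{N_c}}(\forall \lambda\colon \abs{\lambda}>R)$. The comparison of $\binom{N}{N_c}Z_{\mathrm{out}}Z_{\mathrm{in}}/Z_N$ with the hole probability then amounts to estimating ratios $\Gamma(k+1,NR^2)/k!$ uniformly for $k$ in the transition window $k\sim NR^2$; Temme-type uniform saddle-point asymptotics of the incomplete Gamma function give this matching with relative error $1+O(\log^3 N/N)$.

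The more delicate step is the evaluation of $\langle C\rangle$. Expanding $\log C = -2\operatorname{Re}\sum_{l\geq 1} P_l(\mu)T_l(\lambda)/l$ with $P_l(\mu) := \sum_j\mu_j^l$ and $T_l(\lambda) := \sum_i\lambda_i^{-l}$, the inner/outer independence and the radial symmetry make both $P_l$ and $T_l$ centred, and an exact evaluation against the monomial kernels yields $\operatorname{Var}(P_l)\to l(1-c)^l$ (since the inner ensemble is essentially $\mathcal{G}_m$ rescaled by $\sqrt{1-c}$) and $\operatorname{Var}(T_l)\to lR^{-2l}$. A Rider--Vir\'ag-type central limit theorem for analytic linear statistics on radial $\beta=2$ ensembles, applied independently for each $l$, then gives the single complex-Gaussian integration
\[
\mathbb{E}\exp\!\bigl(-\tfrac{2}{l}\operatorname{Re}(P_l T_l)\bigr) \longrightarrow \bigl(1 - \operatorname{Var}(P_l)\operatorname{Var}(T_l)/l^2\bigr)^{-1},
\]
and asymptotic joint independence of the modes $\{P_l,T_l\}_l$ collapses the product into
\[
\langle C\rangle \longrightarrow \prod_{l\geq 1}\frac{1}{1-\bigl((1-c)/R^2\bigr)^l} = \sum_{l\geq 0} p(l)\bigl((1-c)/R^2\bigr)^l,
\]
which is precisely the series in the theorem, by Euler's partition identity.

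The main obstacle is quantifying the convergences in the last step. One fixes a cutoff $L \asymp \log N$, makes the CLT and the joint-independence statement effective for all $l\leq L$ by controlling sufficiently many higher cumulants of the determinantal linear statistics, and handles the tail $l>L$ via the deterministic bounds $\abs{P_l}\leq m(\sqrt{1-c})^l$ and $\abs{T_l}\leq N_c R^{-l}$, whose contribution to $\log\langle C\rangle$ decays geometrically in $L$. Balancing the cutoff against the per-$l$ Gaussian approximation error and the truncation tail produces the $1+O(\log^3 N/N)$ bound, which combines with the incomplete-Gamma error from the partition-function matching to give the theorem.
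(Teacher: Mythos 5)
Your decomposition is exact, and after the radial-orthogonality evaluation the prefactor $\binom{N}{N_c}Z_{\mathrm{out}}Z_{\mathrm{in}}/Z_N$ equals $\prod_{j=m+1}^{N}Q(j,NR^2)\prod_{j=1}^{m}\bigl(1-Q(j,NR^2)\bigr)$ with $m=N-N_c$, which is precisely the weight $\tilde P_{\mathbf{0}}$ of the extremal index set in the paper's mixture representation (Theorem~\ref{unconditioned set probability}, \eqref{unconditional probability}); so this step needs no uniform saddle-point matching beyond observing that the second product is $1-O(Ne^{-\epsilon N})$. From there your route genuinely diverges from the paper: the paper never touches the angular interaction, since the Kostlan-type mixture representation turns the event into an explicit sum of incomplete-gamma products indexed by partitions, and the factor $\sum_l p(l)\bigl((1-c)/R^2\bigr)^l$ arises combinatorially; you instead push the entire partition structure into the interaction term $\langle C\rangle$ via the mode expansion and a Gaussian fluctuation computation, which does reproduce the correct constant.

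The evaluation of $\langle C\rangle$ is, however, where the argument has a genuine gap. First, $\langle C\rangle$ is an exponential moment of linear statistics, and a CLT --- even an effective joint CLT for all $l\le L$ --- does not control exponential moments: $\log C$ can be as large as $2mN_c\log 2=\Theta(N^2)$, and a priori the expectation could be dominated by rare configurations with a few inner points near $\abs{\mu}=R$, each costing only $e^{-\Theta(N)}$ in probability while boosting $\log C$ by up to $2N_c\log 2=\Theta(N)$; nothing in your sketch rules out this competition, and beating it is exactly what a rigorous treatment of $\mathbb{E}\,e^{\log C}$ requires. Second, the tail truncation rests on the bound $\abs{P_l}\le m(\sqrt{1-c})^{\,l}$, which is false deterministically: the inner points are only confined to $\abs{\mu}<R$, so the deterministic bound is $\abs{P_l}\le mR^{l}$, and then $\abs{P_lT_l}/l\le mN_c/l$ has no geometric decay in $l$; the $(\sqrt{1-c})^{\,l}$ bound holds only off an exceptional event, which feeds back into the uncontrolled exponential-moment issue above. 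Third, the claimed relative error $1+O(\log^{3}N/N)$ would require the Laplace transform of the $\sim\log N$ coupled modes $(P_l,T_l)$ to match its Gaussian prediction to precision $O(\log^{3}N/N)$, and no mechanism (cumulant bounds, Ward identities, determinant asymptotics) is offered for this --- it is where essentially all the work lies. The paper's mixture/Kostlan argument avoids fluctuation theory altogether, which is why its error analysis reduces to elementary incomplete-gamma and partition-function estimates; your plan could in principle be completed, but as written the key limit for $\langle C\rangle$ is a heuristic, not a proof.
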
	
	
	Note that using this theorem we can write an asymptotic estimate to the probability $\mathbb{P} (\# \{\lambda: \abs{\lambda} >R\} = N_c) $ up to a multiplicative error of $ (1+O(N^{-\frac{1}{12}})) $, by substituting the asymptotic expansion of $\mathbb{P}_{\mathcal{G}_{N_{c}}} (\forall \lambda \colon \abs{\lambda}>R ) $ found by Charlier \cite[Theorem 1.7]{LargeGapAsymptotics}.
	
	Next we study the limits of the conditional distributions given the overcrowding event. Figure~\ref{fig:GinibreSim} shows an illustration of the conditional distribution of the eigenvalues of a Ginibre matrix of size $N=4000$, conditioned on the event that $3600$ of them (corresponding to $c=0.9 $) satisfy $\abs{\lambda}\geq R=0.7$. Three regions are visible: the interior region $|\lambda|<R$, a narrow boundary layer around $|\lambda|=R$, and the exterior region $|\lambda|>R+\epsilon$.
	
	\begin{figure}[h]
		\centering
		\includegraphics[width=0.7\linewidth]{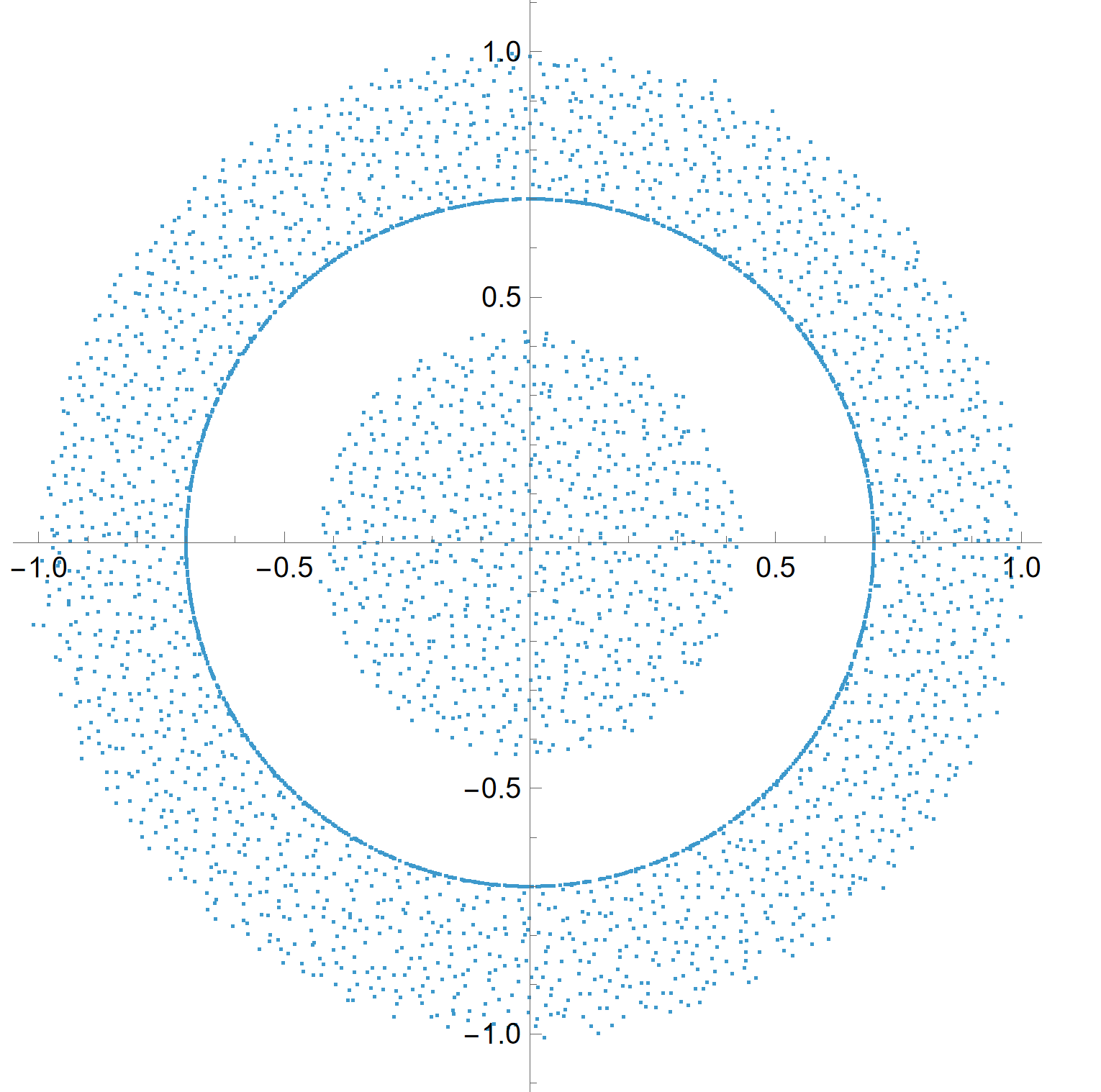}
		\caption[The conditional distribution of the eigenvalues]{Illustration of the conditional distribution of the eigenvalues of the Ginibre ensemble of size 4000, conditioned on the event that 90\% of them satisfy $\abs{\lambda}\geq R=0.7$, showing the three different regions of the distribution: inside the disk of radius 0.7, on the boundary of the disk and outside the disk.}
		\label{fig:GinibreSim}
	\end{figure}
	
	The main results of this paper are the limits of the conditional distribution. We separate the plane into three different regions and look at the limit in each of them.
	
	\begin{Thm} \label{conditional distribution summary}
		For $c \in (1-R^2,1] $, the conditional distribution of the Ginibre Ensemble of size $N$ given that $N_c =\lfloor cN \rfloor $ of them satisfy $\abs{\lambda} >R $ behaves asymptotically, as $N\to \infty $, in the following way:
		\begin{enumerate}
			\item Inside the disk $D(0,R)$, it is the same as a Ginibre ensemble of size $N-N_c$ scaled by $\sqrt{\frac{N-N_c}{N}}$.
			\item For any $\epsilon >0 $, outside the disk the disk $D(0,R+\epsilon)$ it is the same as a Ginibre ensemble of size $N$.
			\item Near the edge of the disk $D(0,R) $, after zooming in around $ R$ by a factor of $ \frac{N(R^2-1+c)}{R}$, it is the same as the determinantal point process with kernel 
			\begin{equation*}
				K_{\mathcal{X}}(z,w)=\frac{1-(z+\overline{w}+1) e^{-z-\overline{w}}} {\pi (z+\overline{w})^2} 
			\end{equation*}
			with respect to the Lebesgue measure on the right half plane.
		\end{enumerate}
		
	\end{Thm}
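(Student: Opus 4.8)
Write the conditioned configuration as the $m:=N-N_c$ points $x_1,\dots,x_m$ lying in $D(0,R)$ together with the $N_c$ points $y_1,\dots,y_{N_c}$ lying in $\{\abs{\lambda}>R\}$. Factorising the Vandermonde factor over the union into the Vandermonde over the $x$'s, the Vandermonde over the $y$'s, and the cross term $\prod_{i,k}\abs{x_i-y_k}^2$, the conditional density is proportional to
\begin{equation*}
	\prod_{i<j}\abs{x_i-x_j}^2\ \prod_{k<l}\abs{y_k-y_l}^2\ \prod_{i,k}\abs{x_i-y_k}^2\ \prod_i e^{-N\abs{x_i}^2}\prod_k e^{-N\abs{y_k}^2}
\end{equation*}
on $D(0,R)^m\times\{\abs{\lambda}>R\}^{N_c}$. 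The engine of the whole argument is that for $\abs{y}>R>\abs{x}$ one has $\abs{x-y}^2=\abs{y}^2\abs{1-x/y}^2$ with $\log\abs{1-x/y}^2$ harmonic in $x$ on $D(0,R)$ and of mean zero on every circle about the origin; consequently $\prod_{i,k}\abs{x_i-y_k}^2=\prod_k\abs{y_k}^{2m}\cdot\prod_{i,k}\abs{1-x_i/y_k}^2$, and the cross factor $\prod_{i,k}\abs{1-x_i/y_k}^2$ integrates to $1$ against any rotation–invariant measure in the $x$'s at fixed $y$'s, and against any rotation–invariant measure in the $y$'s at fixed $x$'s. This is the same decoupling mechanism that underlies Theorem~\ref{overcrowding probability}; the task is to make it quantitative in all three regions.

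\textbf{Interior.} Integrating out the exterior points, the interior marginal is proportional to $\prod_{i<j}\abs{x_i-x_j}^2\prod_i e^{-N\abs{x_i}^2}F(x)$, where $F(x)$ is the integral over the exterior points and, by the factorisation, $F(x)=F(0)\,\mathbb{E}_\nu\big[\prod_{i,k}\abs{1-x_i/y_k}^2\big]$ with $\nu$ the rotation–invariant determinantal ensemble of $N_c$ points with weight $\abs{y}^{2m}e^{-N\abs{y}^2}$ on $\{\abs{y}>R\}$, whose correlation kernel is the explicit sum over monomials. Writing $\prod_{i,k}\abs{1-x_i/y_k}^2=e^{W(x)}$, $W(x)=\sum_k h_x(y_k)$, $h_x(y)=\sum_i\log\abs{1-x_i/y}^2$, the function $h_x$ is harmonic on $\{\abs{y}\ge R\}$ with $\partial_y h_x(y)=\sum_{j\ge1}p_j y^{-j-1}$, $p_j=\sum_i x_i^j$. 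A no–outliers estimate for the interior gas together with concentration of the power sums $p_j$ yields $\sup_{\abs{y}\ge R}\abs{\partial_y h_x(y)}=O(\mathrm{polylog}\,N)$ off an event of probability $o(1)$, so that $F$ is slowly varying at the microscopic scale $N^{-1/2}$, while a variance bound for harmonic linear statistics of $\nu$ controls $\operatorname{Var}_\nu(W)$. After the substitution $x=\sqrt{(N-N_c)/N}\,\xi$ the factor $e^{-(N-N_c)\abs{\xi}^2}$ becomes $e^{-N\abs{x}^2}$, so $\prod_{i<j}\abs{x_i-x_j}^2\prod_i e^{-N\abs{x_i}^2}$ on $D(0,R)^m$ is exactly the density of the size-$(N-N_c)$ Ginibre ensemble rescaled by $\sqrt{(N-N_c)/N}$; since this gas concentrates on $D(0,\sqrt{1-c})$, which is compactly contained in $D(0,R)$, the interior correlation functions converge to those of that rescaled Ginibre ensemble.

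\textbf{Exterior and edge.} Conditionally on the interior configuration $x$, the exterior points form the determinantal ensemble with weight $w_x(y)=e^{-N\abs{y}^2}\prod_i\abs{y-x_i}^2=\abs{y}^{2m}e^{-N\abs{y}^2}e^{h_x(y)}$ on $\{\abs{y}>R\}$. The effective potential $\abs{y}^2-2(1-c)\log\abs{y}$ restricted to $\{\abs{y}>R\}$ is increasing (precisely because $R^2>1-c$), so its droplet is the annulus $\{R\le\abs{y}\le\sqrt{R^2+c}\}$ with uniform density $N/\pi$, a soft edge at $\abs{y}=\sqrt{R^2+c}$ and a hard edge at $\abs{y}=R$. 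On a set of interior configurations of probability $1-o(1)$ the factor $e^{h_x}$ is, by the bound on $\partial_y h_x$ above, slowly varying at scale $N^{-1}$, and such a factor does not affect the microscopic correlation functions; we are therefore reduced to the rotation–invariant ensemble $\nu$, whose kernel is the explicit monomial sum. For $\abs{y}>R+\epsilon$ the asymptotics of that kernel give the infinite Ginibre process at density $N/\pi$, which is the bulk of the size-$N$ Ginibre ensemble. Near $\abs{y}=R$ this is an instance of the hard–wall ensemble of Seo~\cite{CoulombGasNearWall}; the wall layer has width of order $R/\big(N(R^2-1+c)\big)$, so zooming by $\tfrac{N(R^2-1+c)}{R}$ makes it of unit size and produces the kernel $K_{\mathcal X}$.

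\textbf{Transfer and the main obstacle.} All of the above is conditioned on ``good'' interior configurations; one closes the argument by showing the complementary event is negligible also under the reweighting by $F(x)/F(0)=\mathbb{E}_\nu[e^{W(x)}]$, which follows from a uniform bound on $\mathbb{E}_\nu[e^{2W(x)}]$ on the good set together with a priori large–deviation bounds for the interior gas. \textbf{I expect the principal difficulty to be the quantitative decoupling near the wall:} establishing, with probability $1-o(1)$, that $h_x$ is slowly varying at the scale $N^{-1}$ — equivalently that $\sup_{\abs{y}\ge R}\big|\sum_{j\ge1}p_j y^{-j-1}\big|=O(\mathrm{polylog}\,N)$ — which needs both a sharp control that no interior point approaches the wall (all $\abs{x_i}\le\sqrt{1-c}+O(N^{-1}\log N)$) and simultaneous concentration of all the power sums $p_j$, and moreover for the $F$–reweighted interior law rather than for the plain Ginibre ensemble, forcing a short bootstrap between the two descriptions.
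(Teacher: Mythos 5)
Your strategy is genuinely different from the paper's. The paper exploits the radial symmetry through the algebraic structure of determinantal point processes: since the monomials $z^k$ are orthogonal both inside and outside $D(0,R)$, the restriction of the Ginibre kernel to either region has the same eigenbasis, and \cite[Theorem~4.5.3]{GAFBook} writes the restricted process as an explicit mixture of projection processes indexed by subsets $J\subseteq\{0,\dots,N-1\}$, with explicit mixing weights $\tilde P_J=\prod(1-a_k)\prod_{k\in J}\tfrac{a_k}{1-a_k}$. Conditioning on the overcrowding event simply fixes $\#J=N_c$, and the entire analysis reduces to showing (i) the mixture concentrates on $J$ near $J_{\mathbf 0}$ (Lemmas~\ref{first bound}, \ref{bound on big sum}, Theorem~\ref{set distribution}) and (ii) for such $J$ the kernel $K^J$ is within $O(\log N)$ of $K^{J_{\mathbf 0}}$, whose three regional limits are computed by elementary incomplete-gamma asymptotics. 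No potential theory, no concentration of power sums, no reweighting step. You instead factor the Vandermonde directly and try to decouple the interior and exterior gases analytically; this is a plausible alternative but substantially heavier, and your ``Transfer and the main obstacle'' paragraph correctly identifies that it requires a bootstrap the paper simply avoids.

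There is, however, a genuine error in what you call the ``engine of the whole argument.'' You write that because $\log\abs{1-x/y}^2$ is harmonic in $x$ and of circular mean zero, it follows that the cross factor $\prod_{i,k}\abs{1-x_i/y_k}^2$ ``integrates to $1$ against any rotation-invariant measure in the $x$'s at fixed $y$'s.'' This does not follow, and it is false: for a single $x$ uniform on $\abs{x}=r<\abs{y}$, harmonicity gives $\mathbb E[\log\abs{1-x/y}^2]=0$, but by Jensen (or by expanding $\prod_k(1-x/y_k)=\sum_j c_j x^j$ with $c_0=1$ and computing $\mathbb E\abs{\cdot}^2=\sum_j\abs{c_j}^2 r^{2j}>1$) one has $\mathbb E\big[\abs{1-x/y}^2\big]=1+r^2/\abs{y}^2>1$. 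What is true is that the geometric mean of the cross factor is $1$; the arithmetic mean exceeds $1$, and in fact the joint density does \emph{not} factorize over interior and exterior. The correct form of the decoupling — which the paper uses — is conditional independence given the random index set $J$ in the mixture representation, not a pointwise or averaged factorization of the Vandermonde. Your later formulation via $F(x)=F(0)\,\mathbb E_\nu[e^{W(x)}]$ quietly drops the ``integrates to $1$'' claim and replaces it with ``$F$ is slowly varying,'' which is the right target, but to reach any of the three parts of the theorem you would then need to make rigorous (a) a no-outlier bound for the $F$-reweighted interior gas, (b) uniform control of $\sum_{j\ge1}p_j y^{-j-1}$ over $\abs{y}\ge R$, and (c) transfer of these bounds across the reweighting — none of which is carried out and each of which is a nontrivial piece of work. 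Beyond that, you should beware that Proposition~\ref{Ginibre regions}(1) is a statement about fixed (macroscopic) test functions in $D(0,R)^k$, so ``slowly varying at scale $N^{-1/2}$'' is not obviously enough; the paper instead gets a uniform $O(\log N)$ kernel bound on all of $D(0,R)$ directly from the mixture concentration, bypassing this issue.
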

	\begin{rmks}
		\begin{enumerate}
			\item By behaving asymptotically the same we mean e.g. that the $k$-point functions of the two processes are asymptotically the same.
			\item Theorem \ref{conditional distribution summary} follows from Proposition~\ref{Ginibre regions} and Proposition~\ref{hard wall limit distribution} which include estimates for the rates of convergence.
			\item According to the rotational symmetry of the problem, when zooming in around a point $Re^{i \theta} $ by the factor of $\frac{N(R^2-1+c)}{R}$, the limiting process is the same process $ \mathcal{X}$ rotated by $\theta $.
			\item The process $\mathcal{X} $, in part 3 of Theorem \ref{conditional distribution summary}, appears in Seo \cite{CoulombGasNearWall} as the limit of a Ginibre ensemble with a hard wall inside the bulk near the hard wall.
		\end{enumerate}
	\end{rmks}

	The paper is organised as follows: In Section \ref{Eigenvlaue mixture probabilities} we use a result from \cite{GAFBook} to represent the distribution of the eigenvalues as a mixture of determinantal point precesses and show which of those processes have significant probability to appear. In Section \ref{Overcrowding Probability} we use the previous result to prove Theorem~\ref{overcrowding probability}. Then, in Section~\ref{CondDistSection}, we prove Theorem~\ref{conditional distribution summary}.
	
	\subsection{Additional and related results}
	The probability of an overcrowding event in similar processes was studied before, for example, a result from the physics literature by Majumdar, Nadal, Scardicchio and Vivo calculated the leading order asymptotics of the probability of overcrowding of eigenvalues in the positive half-line in the Gaussian unitary ensemble (GUE) \cite{PositiveEigenvaluesOfGUE}. As far as we know the next order term is only known for the hole event (see Dea\~no and Simm \cite{PositiveDefiniteGUE}).
	
	For the infinite Ginibre ensemble, the leading term of the overcrowding probability was shown by Shirai \cite{InfiniteGinibreOvercrowdingLDP} and Fenzl and Lambert found the next terms in the asymptotics of the overcrowding probability \cite{InfiniteGinibreOvercrowdingAsymptotics} which correspond to the Jancovici, Lebowitz, Manificat (JLM) predictions \cite{JLM}. The overcrowding event on general sets in the finite Ginibre ensemble was studied by Armstrong, Serfaty and Zeitouni who found properties the limiting distribution of the eigenvalues must satisfy \cite{OvercrowdingInGeneralSet}. In our case of overcrowding outside the disk $D(0,R) $ their result shows the existence of a singular part of the limiting measure supported on $\{z\in \mathbb{C} : \abs{z}=R \} $, and  that for some $\epsilon>0 $ the limiting measure has zero measure on $ \{z\in \mathbb{C} : \abs{z}\in [R-\epsilon,R) \}$, in accordance with our Theorem \ref{conditional distribution summary}.
	
	The main results of this paper can be generalised with minor modifications to the probability  distribution, studied by Charlier \cite{LargeGapAsymptotics},
		\begin{equation}
			\frac{1}{Z_N^{\alpha,b}} \prod_{1\leq j < k \leq N} \abs{\lambda_k -\lambda_j}^2 \prod_{j=1}^{N} \abs{\lambda_j}^{2\alpha} e^{-N\abs{\lambda_j}^{2b}}\mathrm{d}\lambda_j, \qquad b>0, \alpha >-1 .
		\end{equation}
	We show them only in the case of the Ginibre distribution, corresponding to $ \alpha = 0, b=1 $, for ease of computation.

	In addition, our method could be extended to results on the conditional distribution given an overcrowding event in other radially symmetric sets, such as unions of disks and annuli centred around the origin. An interesting question that would require different tools is the question of overcrowding on a non-radially-symmetric set.
	
	\paragraph{Notations}
		In this paper we will use the following notations:
		\begin{itemize}
			\item The incomplete gamma functions
			\begin{align*}
				\Gamma(a,s) = \int_{s}^{\infty} x^{a-1}e^{-x}\mathrm{d}x, && 			\gamma(a,s) = \int_{0}^{s} x^{a-1}e^{-x}\mathrm{d}x.
			\end{align*}
			\item The normalised incomplete gamma function 
			\begin{equation*}
				Q(a,z)=\frac{\Gamma(a,z)}{\Gamma(a)}.
			\end{equation*}
			See Appendix~\ref{incomplete gamma appendix} for details.
			\item $\mathcal{G}_{N} $ denotes the Ginibre ensemble of size $N$.
			\item $ \mathcal{G}_{N,c,R} $ denotes the Ginibre ensemble of size $N$ conditioned of the event that $N_c = \lfloor cN \rfloor $ eigenvalues satisfy $\abs{\lambda}>R $.
			
			\item For a point process $\mathcal{Y}$, $\mathcal{Y}^{\wedge k} $ denotes the point process defined by choosing $ k $ random points from $\mathcal{Y} $ without repetition.
			\item $D(z,r)$ for the disk in the complex plane centred at $z$ with radius $r$.
			\item $\mathbb{C}_+ = \{ z\in \mathbb{C} : \Re(z) >0 \} $ is the right half plane.
		\end{itemize}

	\paragraph{Acknowledgement} I would like to thank Aron Wennman for his insight that the Ginibre ensemble is well-suited to study the overcrowding event due to its radial symmetry. I would also like to thank Christophe Charlier and Arno Kuijlaars for the helpful discussions with them about this problem. I am grateful to Sasha Sodin and Ofer Zeitouni for their suggestions and remarks leading to improvements of the error term in the main result. Finally I would like to thank my advisor Alon Nishry for his continuous help and suggestions during this research.


	\section{Eigenvalue distribution inside and outside the disk} \label{Eigenvlaue mixture probabilities}
	
	We denote by $ \lambda_1,\dots, \lambda_N $ the eigenvalues of a Ginibre $N \times N $ random matrix, as in \eqref{Ginibre def} (see \cite{GAFBook}, \cite{Ginibre}). They form a determinantal point process with kernel
	\begin{equation}
		K_N(z,w)=\sum_{k=0}^{N-1} \frac{N^{k+1}(z\overline{w})^k e^{-\frac{1}{2}N(\abs{z}^2+\abs{w}^2)}}{\pi k!} .
	\end{equation}
	We start by noting that if we look at the process restricted to the complex plane outside a disk of radius $ R$, i.e. at the random set $ \left\{ \lambda_i \colon i\in \{1,\dots,N\}, \abs{\lambda_i}>R \right\} $ it is still a determinantal point process with the same kernel restricted to the set $ \mathbb{C} \backslash D(0,R) $.
	
	Outside the disk $D(0,R) $ the functions $ \phi_{k}(z) = \sqrt{\frac{N^{k+1}}{\pi \Gamma(k+1,NR^2)}}e^{-\frac{1}{2}N\abs{z}^2}z^k $ for $k=0,1,\dots $ are orthonormal and we can write the kernel as
	\begin{equation}
		 K_N(z,w) = \sum_{k=0}^{N-1} a_k \phi_{k}(z)\overline{\phi_{k}(w)}
	\end{equation}
	with $ a_k=\frac{\Gamma(k+1,NR^2)}{k!} $.
	Thus, as a special case of \cite[Theorem 4.5.3]{GAFBook}.
	
	\begin{Thm} \label{unconditioned set probability}
		
		The distribution of the Ginibre eigenvalues that satisfy $\abs{\lambda} >R $ is a mixture of determinantal point processes indexed by sets $ J \subseteq \{0,1,\dots,N-1\}$ with kernels 
		\begin{equation} \label{J kernel}
			K^J(z,w)=\sum_{k\in J} \phi_{k}(z)\overline{\phi_k(w)} \mathds{1}_{\abs{z},\abs{w}>R} .
		\end{equation}

		The probability of the process with index $J$ is 
		\begin{equation} \label{unconditional probability}
			\tilde{P}_J = \prod_{k=0}^{N-1} (1-a_k) \prod_{k\in J}  \frac{a_k}{1-a_k}.
		\end{equation}
	\end{Thm}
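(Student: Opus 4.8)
The plan is to verify that the kernel governing the exterior eigenvalues satisfies the hypotheses of \cite[Theorem 4.5.3]{GAFBook} and then read off that theorem's conclusion verbatim. Recall that \cite[Theorem 4.5.3]{GAFBook} asserts that a determinantal point process whose kernel is a self-adjoint, non-negative, locally trace class contraction on the ambient $L^2$ space can be sampled in two stages: first toss independent Bernoulli random variables whose parameters are the eigenvalues of the kernel, then sample the determinantal projection process onto the span of the eigenfunctions selected by those Bernoulli variables. Thus the whole task reduces to exhibiting the spectral decomposition of the operator associated with $\{\lambda_i : \abs{\lambda_i}>R\}$.

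First I would invoke the standard fact that restricting a determinantal point process with kernel $K$ and reference measure $\mu$ to a Borel set $B$ produces a determinantal point process with kernel $K|_{B\times B}$ and reference measure $\mu|_B$; this is immediate because the correlation functions of the restricted process are precisely the restrictions of the original correlation functions. Taking $K=K_N$, $\mu$ Lebesgue measure on $\mathbb{C}$, and $B=\mathbb{C}\setminus D(0,R)$, the exterior point set is determinantal on $\mathbb{C}\setminus D(0,R)$ with kernel $K_N|_{B\times B}$.

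Next I would check that $\{\phi_k\}_{k=0}^{N-1}$ is orthonormal in $L^2(\mathbb{C}\setminus D(0,R))$: in polar coordinates the angular integration kills the cross terms with $k\neq j$, and for $k=j$ the radial integral $\int_R^\infty e^{-Nr^2}r^{2k+1}\,\mathrm{d}r$ equals $\tfrac{1}{2N^{k+1}}\Gamma(k+1,NR^2)$ after the substitution $u=Nr^2$, which is exactly the normalising constant built into $\phi_k$. Since the factor $\Gamma(k+1,NR^2)$ cancels between $a_k$ and the normalisation of $\phi_k$, the identity $K_N(z,w)=\sum_{k=0}^{N-1}a_k\,\phi_k(z)\overline{\phi_k(w)}$ holds pointwise, and this display is therefore the genuine spectral decomposition of the restricted operator. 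Because $0<\Gamma(k+1,NR^2)<\Gamma(k+1)$ for $R\in(0,\infty)$, we have $a_k\in(0,1)$; the kernel has finite rank $N$, so it is trivially trace class, self-adjoint, and a contraction, and \cite[Theorem 4.5.3]{GAFBook} applies.

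Applying the theorem with $(I_k)_{k=0}^{N-1}$ independent, $\mathbb{P}(I_k=1)=a_k$, and $J=\{k : I_k=1\}$: conditionally on $J$ the process is the determinantal projection process with kernel $\sum_{k\in J}\phi_k(z)\overline{\phi_k(w)}$ on $\mathbb{C}\setminus D(0,R)$, which together with the indicator $\mathds{1}_{\abs{z},\abs{w}>R}$ is exactly \eqref{J kernel}, and the probability of index $J$ is $\prod_{k\in J}a_k\prod_{k\notin J}(1-a_k)=\prod_{k=0}^{N-1}(1-a_k)\prod_{k\in J}\tfrac{a_k}{1-a_k}$, where $a_k<1$ justifies dividing. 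The one genuine point of care is the orthonormality computation: it is what makes the displayed sum a true spectral decomposition rather than merely an expansion in a non-orthogonal frame, and it is what forces $a_k\in(0,1)$, so that both the hypotheses of \cite[Theorem 4.5.3]{GAFBook} and the formula for $\tilde{P}_J$ are legitimate. Everything else is a direct citation, so I do not expect any substantive obstacle.
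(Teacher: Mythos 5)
Your proposal is correct and follows the same route as the paper: restrict the determinantal process to $\mathbb{C}\setminus D(0,R)$, observe that the $\phi_k$ are orthonormal there so that $K_N=\sum a_k\phi_k\overline{\phi_k}$ is the spectral decomposition of the restricted kernel, and then read off the mixture representation and the probabilities $\tilde P_J$ from \cite[Theorem 4.5.3]{GAFBook}. You supply the explicit radial integral and the check that $a_k\in(0,1)$, which the paper leaves implicit, but the argument is otherwise the same.
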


	This Theorem shows that the process outside the disk is a mixture of projection point processes (and similarly inside the disk). The amount of points in each of those processes is deterministically $\#J $. Moreover, the overcrowding event depends only on the configuration of points outside the disk (in particular on their number) and thus conditioning the process on the overcrowding event and then restricting it to $\mathbb{C} \backslash D(0,R) $ is the same as restricting the process to $\mathbb{C} \backslash D(0,R) $ and then conditioning on the overcrowding event. Thus, we conclude the next Proposition.
	
	\begin{prop} \label{conditioned mixture}
		The distribution of the Ginibre eigenvalues that satisfy $\abs{\lambda}>R $ conditioned on the event that there are exactly $m$ such eigenvalues is a mixture of of determinantal point processes indexed by $ J \subseteq \{0,1,\dots,N-1\}$ with $ \#J = m $ with kernels as in \eqref{J kernel}. The probability of the process with index $J $ is 
		\begin{equation} \label{Set prob conditioned}
			P_J = K_m \prod_{k\in J}  \frac{a_k}{1-a_k}
		\end{equation}
		where $ K_m$ is a constant not depending on $J$.
	\end{prop}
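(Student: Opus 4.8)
The plan is to obtain this as a direct consequence of Theorem~\ref{unconditioned set probability} via an elementary conditioning argument. The key structural observation is that in the mixture of Theorem~\ref{unconditioned set probability} the number of points is a \emph{deterministic} function of the mixture index $J$: since the $\phi_k$ are orthonormal on $\mathbb{C}\setminus D(0,R)$, the kernel $K^J$ in \eqref{J kernel} is a Hermitian orthogonal projection of rank $\#J$, and a determinantal point process with a finite-rank projection kernel has exactly $\#J$ points almost surely (see \cite[Theorem 4.5.3]{GAFBook}). Writing $E_m$ for the event that the restricted process has exactly $m$ points, the component indexed by $J$ therefore lies in $E_m$ with probability one if $\#J=m$ and with probability zero otherwise. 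Note also that $a_k=Q(k+1,NR^2)\in(0,1)$, so every weight $\tilde P_J$ is strictly positive and conditioning on $E_m$ is well defined for each $m\in\{0,\dots,N\}$.

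Given this, I would apply Bayes' rule to the finite mixture. Writing the unconditioned restricted law as $\sum_{J\subseteq\{0,\dots,N-1\}}\tilde P_J\,\mu_J$, with $\mu_J$ the law of the determinantal process with kernel $K^J$, conditioning on $E_m$ retains only the terms with $\#J=m$, leaves each component law $\mu_J$ unchanged, and renormalises the weights:
\begin{equation*}
	P_J=\frac{\tilde P_J}{\sum_{J'\colon\#J'=m}\tilde P_{J'}}
	=\frac{\prod_{k\in J}\frac{a_k}{1-a_k}}{\sum_{J'\colon\#J'=m}\prod_{k\in J'}\frac{a_k}{1-a_k}},
\end{equation*}
where the common factor $\prod_{k=0}^{N-1}(1-a_k)$ from \eqref{unconditional probability} cancels. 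Setting $K_m=\bigl(\sum_{J'\colon\#J'=m}\prod_{k\in J'}\frac{a_k}{1-a_k}\bigr)^{-1}$, which manifestly does not depend on $J$, yields \eqref{Set prob conditioned}, and the conditioned process is again the mixture of the determinantal processes with kernels \eqref{J kernel}, as claimed. Finally, to connect this with the Ginibre ensemble conditioned on the overcrowding event and then restricted to $\mathbb{C}\setminus D(0,R)$, I would note that the event $\{\#\{\lambda\colon\abs{\lambda}>R\}=m\}$ is measurable with respect to the point configuration outside the disk, so conditioning on it commutes with the restriction map; combining this with the previous computation gives the Proposition.

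I do not anticipate a serious obstacle. The only point requiring care is the almost-sure cardinality of a finite-rank projection determinantal process, which is classical, together with the bookkeeping in the Bayes computation; the argument is essentially the remark that conditioning on the point count is the same as conditioning on the mixture index lying in a prescribed level set.
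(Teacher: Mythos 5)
Your proposal is correct and follows essentially the same route as the paper: it uses the deterministic cardinality $\#J$ of each projection determinantal component, the fact that the overcrowding event is measurable with respect to the configuration outside $D(0,R)$ (so conditioning commutes with restriction), and then Bayes' rule on the mixture weights. You have merely made explicit the Bayes computation and the closed form of $K_m$ that the paper leaves implicit.
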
 
	
	Similarly defining 
	\begin{equation} \label{inner orthogonal pol def}
		 \tilde{\phi}_k(z) = \sqrt{\frac{N^{k+1}}{\pi \gamma(k+1,NR^2)}}e^{-\frac{1}{2}N\abs{z}^2}z^k
	\end{equation}
	 we have
	
	\begin{prop} \label{conditioned mixture inner}
		The distribution of the Ginibre eigenvalues that satisfy $\abs{\lambda } <R $conditioned on the event that there are exactly $N-m$ such eigenvalues is a mixture of of determinantal point processes indexed by $ J \subseteq \{0,1,\dots,N-1\}$ with $ \#J = m $ with kernels
		\begin{equation} \label{J kernel inner}
			K^J(z,w)=\sum_{k\notin J} \tilde{\phi}_{k}(z)\overline{\tilde{\phi}_{k}(w)} \mathds{1}_{\abs{z},\abs{w}<R} .
		\end{equation}
		The probability of the process with index $J $ is given by \eqref{Set prob conditioned}.
	\end{prop}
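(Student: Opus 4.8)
The plan is to mirror the argument behind Theorem~\ref{unconditioned set probability} and Proposition~\ref{conditioned mixture}, interchanging the roles of the inside and the outside of $D(0,R)$ and replacing $a_k$ by $1-a_k$. First I would note that the restriction of the Ginibre point process to $D(0,R)$ is again determinantal, with kernel the restriction of $K_N$ to $D(0,R)\times D(0,R)$, and that the functions $\tilde{\phi}_k$ of \eqref{inner orthogonal pol def}, $k=0,1,\dots$, are orthonormal in $L^2(D(0,R))$; this is the polar-coordinates computation
\begin{equation*}
	\frac{1}{\pi}\int_{D(0,R)}\abs{z}^{2k}e^{-N\abs{z}^2}\,\mathrm{d}A(z)=\frac{\gamma(k+1,NR^2)}{N^{k+1}}.
\end{equation*}
Setting $b_k:=\frac{\gamma(k+1,NR^2)}{k!}$ and using $\Gamma(k+1,NR^2)+\gamma(k+1,NR^2)=\Gamma(k+1)=k!$ gives $b_k=1-a_k\in[0,1]$, while the elementary identity
\begin{equation*}
	\frac{N^{k+1}(z\overline{w})^k e^{-\frac{1}{2}N(\abs{z}^2+\abs{w}^2)}}{\pi k!}=b_k\,\tilde{\phi}_k(z)\overline{\tilde{\phi}_k(w)}
\end{equation*}
shows that on $D(0,R)$ one has $K_N(z,w)=\sum_{k=0}^{N-1}b_k\,\tilde{\phi}_k(z)\overline{\tilde{\phi}_k(w)}$; that is, the restricted kernel operator has eigenvalues $b_k$ with eigenfunctions $\tilde{\phi}_k$.

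Then I would apply \cite[Theorem 4.5.3]{GAFBook} exactly as in the proof of Theorem~\ref{unconditioned set probability}: the process inside $D(0,R)$ is the mixture over $I\subseteq\{0,\dots,N-1\}$ of the projection determinantal point processes with kernels $\sum_{k\in I}\tilde{\phi}_k(z)\overline{\tilde{\phi}_k(w)}\mathds{1}_{\abs{z},\abs{w}<R}$, where $k$ lies in $I$ independently with probability $b_k$, so the process indexed by $I$ has probability $\prod_{k=0}^{N-1}(1-b_k)\prod_{k\in I}\frac{b_k}{1-b_k}$ and carries exactly $\#I$ points. Since the event that $N_c=\lfloor cN\rfloor$ eigenvalues satisfy $\abs{\lambda}>R$ is the same as the event that $N-N_c$ eigenvalues satisfy $\abs{\lambda}<R$, which is measurable with respect to the configuration inside $D(0,R)$, conditioning on it commutes with restricting the process to $D(0,R)$ (the observation already made before Proposition~\ref{conditioned mixture}). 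Hence the conditional law inside $D(0,R)$ is the above mixture conditioned on $\#I=N-m$. Reindexing by the complement $J:=\{0,\dots,N-1\}\setminus I$, so that $\#J=m$ and the kernel becomes $\sum_{k\notin J}\tilde{\phi}_k(z)\overline{\tilde{\phi}_k(w)}\mathds{1}_{\abs{z},\abs{w}<R}$ as in \eqref{J kernel inner}, the probability of the process indexed by $J$ is proportional (over all $J$ with $\#J=m$) to
\begin{equation*}
	\prod_{k\notin J}\frac{b_k}{1-b_k}=\prod_{k\notin J}\frac{1-a_k}{a_k}=\left(\prod_{k=0}^{N-1}\frac{1-a_k}{a_k}\right)\prod_{k\in J}\frac{a_k}{1-a_k}.
\end{equation*}
The first factor on the right is independent of $J$, so after normalization the probability equals $K_m\prod_{k\in J}\frac{a_k}{1-a_k}$, and this $K_m$ is the very constant of \eqref{Set prob conditioned}: in both Proposition~\ref{conditioned mixture} and the present statement it is the reciprocal of $\sum_{\#J=m}\prod_{k\in J}\frac{a_k}{1-a_k}$.

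I do not expect any analytic difficulty here; the work is entirely in the bookkeeping. The one point requiring genuine care is the commutation of conditioning and restriction, which reduces to observing that the overcrowding event depends only on the configuration inside $D(0,R)$ (equivalently, only on its complement), just as was used for Proposition~\ref{conditioned mixture}. The rest is the verification that $b_k=1-a_k$ via the decomposition $\Gamma(k+1,s)+\gamma(k+1,s)=\Gamma(k+1)$, the spectral rewriting of the restricted kernel, and the passage from $I$ to its complement $J$.
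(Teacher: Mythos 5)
Your proposal is correct and is exactly the argument the paper intends: the paper states this proposition with only a ``similarly'' after Proposition~\ref{conditioned mixture}, and your writeup faithfully fills in that symmetry, including the key identities $b_k=1-a_k$ and $\prod_{k\notin J}\frac{b_k}{1-b_k}=\bigl(\prod_{k=0}^{N-1}\frac{1-a_k}{a_k}\bigr)\prod_{k\in J}\frac{a_k}{1-a_k}$, and the observation that conditioning on the count commutes with restriction to $D(0,R)$.
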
 
	
	From now on we will look at the parameters $R,c $ as constant and be interested in the asymptotics as $N$ goes to infinity. Moreover we assume the case of overcrowding outside the disk, i.e. $ 0<R<1, c>1-R^2 $. The case of undercrowding and the case $ R>1 $ are similar.
	We will denote by $ \mathbb{P}_c $ the probability space in Proposition~\ref{conditioned mixture}, i.e. the mixture of different values of $J$ conditioned on the event $ \# J =N_c $.
	
	In the rest of this section we will estimate the probability $P_J$ for such a set $ J$ of size $N_c = \lfloor cN \rfloor$. Denote $J = \{x_1,\dots,x_{cN}\} \subseteq \{1,\dots , N\}$ where ${x_1 < x_2 < \dots < x_{N_c}}$. It will be more convenient to describe this set by the variables $ n_k = N-N_c +k-x_k $. There is a bijection between such sets and the vectors $\mathbf{n}= (n_1,\dots, n_{N_c}) \in \mathbb{Z}^{N_c} $ satisfying $N-N_c \geq n_{1} \geq n_{2} \geq\dots \geq n_{N_c} \geq 0$. We will denote $n(\{x_1,\dots,x_{cN}\}) = (n_1,\dots, n_{N_c})$ and $J_{\mathbf{n}}=\{ x_1,\dots, x_{N_c}\}$.
	For simplicity we will denote $P_{J_\mathbf{n}}=P_{\mathbf{n}}, \tilde{P}_{J_\mathbf{n}}=\tilde{P}_{\mathbf{n}}$ and hence, by \eqref{unconditional probability} we have 
	\begin{equation} \label{Pn formula} 
		\tilde{P}_{\mathbf{n}} = \prod_{k \in A} Q(k,NR^{2}) \cdot \prod_{k \notin A} \left(1-Q(k,NR^{2})\right) .
	\end{equation}

	Above $ Q(a,z) = \frac{\Gamma(a,z)}{\Gamma(a)} $ is the normalised incomplete gamma function (see Appendix \ref{incomplete gamma appendix}), and in particular 
	\begin{equation} \label{probability quatient}
		\frac{\tilde{P}_{\mathbf{n}}}{\tilde{P}_{\mathbf{0}}} =\frac{P_{\mathbf{n}}}{P_{\mathbf{0}}} =
		\prod_{k=1}^{cN} \frac{Q(N-cN+k-n_k, NR^{2} )}{Q(N-cN+k, NR^{2} )} \frac{1 - Q(N-cN+k, NR^{2} )}{1- Q(N-cN+k-n_k, NR^{2} )} .
	\end{equation}

	To find the values of $n(J) $ with significant probability and to estimate $ P_{\mathbf{n}} $ for them we prove the following series of lemmas:
	
	\begin{Lemma} \label{first bound}
		There exists $ \epsilon>0 $ and $\delta >0 $ such that for $N $ big enough
		\begin{equation*}
			\mathbb{P}_c \left(\exists x\in \{1,\dots,N\}\backslash J : x>NR^2 (1-\delta) \right)  \leq e^{-\epsilon N} P_{\mathbf{0}} .
		\end{equation*} 
	
	\end{Lemma}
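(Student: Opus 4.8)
The plan is to establish the stronger bound $\sum_{\mathbf n\in\mathcal B}P_{\mathbf n}/P_{\mathbf 0}\le e^{-\epsilon N}$, where $\mathcal B$ is the set of $\mathbf n$ (equivalently index sets $J=J_{\mathbf n}$) for which some $x\notin J$ satisfies $x>NR^2(1-\delta)$; since $\mathbb P_c$ assigns mass $P_{\mathbf n}$ to $\mathbf n$ and $P_{\mathbf 0}\le 1$, this gives the lemma (the case $c=1$, where $N_c=N$ and $\mathcal B=\varnothing$, being trivial, so assume $c<1$). Writing $b_j:=Q(j,NR^2)/\bigl(1-Q(j,NR^2)\bigr)$, formula \eqref{probability quatient} reads
\[
\frac{P_{\mathbf n}}{P_{\mathbf 0}}=\frac{\prod_{j\in J_{\mathbf n}} b_j}{\prod_{j\in J_{\mathbf 0}} b_j},\qquad J_{\mathbf 0}=\{N-N_c+1,\dots,N\}.
\]
Since $Q(a,NR^2)=\mathbb P\bigl(\mathrm{Pois}(NR^2)<a\bigr)$ is increasing in the integer $a$, the sequence $b_j$ is increasing, so $J_{\mathbf 0}$ maximises $\prod_{j\in J}b_j$ over all $N_c$-subsets and every ratio $P_{\mathbf n}/P_{\mathbf 0}$ lies in $(0,1]$.

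Next I would reorganise the sum through the largest hole. A $J\in\mathcal B$ has a well-defined largest hole $x_0:=\max\bigl(\{1,\dots,N\}\setminus J\bigr)>M:=\lfloor NR^2(1-\delta)\rfloor$; then $\{x_0+1,\dots,N\}\subseteq J$ while $J\cap\{1,\dots,x_0-1\}$ is an arbitrary set $S$ of size $N_c-N+x_0$, which yields a bijection between $\mathcal B$ and pairs $(x_0,S)$. Expanding $\prod_{j\in J}b_j=\prod_{j\in S}b_j\prod_{j>x_0}b_j$, telescoping against $\prod_{j\in J_{\mathbf 0}}b_j$, and factoring out the largest monomial of the elementary symmetric polynomial $e_m$, one gets
\[
\sum_{\mathbf n\in\mathcal B}\frac{P_{\mathbf n}}{P_{\mathbf 0}}=\sum_{x_0=M+1}^{N}\frac{b_{N-N_c}}{b_{x_0}}\,H(x_0),\qquad H(x_0):=\frac{e_{N_c-N+x_0}(b_1,\dots,b_{x_0-1})}{\prod_{j=N-N_c}^{x_0-1}b_j}\ \ge\ 1.
\]
As $b_{x_0}\ge b_{M+1}$ and there are at most $N$ terms, the left side is at most $N\cdot\bigl(\max_{x_0}H(x_0)\bigr)\cdot b_{N-N_c}/b_{M+1}$.

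The factor $b_{N-N_c}/b_{M+1}$ is controlled by the incomplete–gamma (Poisson large–deviation) asymptotics of Appendix~\ref{incomplete gamma appendix}: for $a<NR^2$ one has $1-Q(a,NR^2)\to 1$ and $Q(a,NR^2)=e^{-NR^2 I(a/NR^2)\,(1+o(1))}$ with $I(\theta)=\theta\log\theta-\theta+1>0$ strictly decreasing on $(0,1)$. Since $(N-N_c)/NR^2\to(1-c)/R^2$ and $(M+1)/NR^2\to 1-\delta$, and since the hypothesis $R^2>1-c$ forces $(1-c)/R^2<1-\delta$ as soon as $\delta<\bigl(R^2-(1-c)\bigr)/R^2$, we get $I\bigl((1-c)/R^2\bigr)>I(1-\delta)$, hence $b_{N-N_c}/b_{M+1}\le e^{-\epsilon_1 N}$ for some $\epsilon_1>0$ and all large $N$; fix such a $\delta$, e.g. $\delta=\bigl(R^2-(1-c)\bigr)/(2R^2)$.

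The remaining step — which I expect to be the real obstacle — is to bound the entropy factor $\max_{x_0}H(x_0)$ by $e^{o(N)}$ (any subexponential bound, or even an exponential one with rate $<\epsilon_1$, suffices, and one then finishes with $\epsilon=\epsilon_1/2$). The naive estimate $H(x_0)\le\binom{x_0-1}{N_c-N+x_0}\le 2^N$ is \emph{not} good enough, since for small $R$ the gain $\epsilon_1$ can be below $\log 2$ and the combinatorial entropy would swamp it; one genuinely has to use the decay of the $b_j$. Writing a generic monomial of $e_{N_c-N+x_0}$ as the largest monomial times a product of consecutive ratios $\rho_t=b_t/b_{t+1}\le 1$, the Poisson tail bounds show that $\rho_t$ is bounded away from $1$ for every $t$ outside a window of width $O(\sqrt N\log N)$ around $NR^2$, while that window contains only $O(\sqrt N\log N)$ indices; summing the resulting weights over all subsets — a partition–function estimate of the same flavour as the one underlying Theorem~\ref{overcrowding probability} — gives $H(x_0)\le e^{o(N)}$ uniformly in $x_0$. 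Combining the three estimates yields $\sum_{\mathbf n\in\mathcal B}P_{\mathbf n}/P_{\mathbf 0}\le N\,e^{o(N)}\,e^{-\epsilon_1 N}\le e^{-\epsilon N}$ for $N$ large, which is the lemma.
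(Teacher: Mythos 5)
Your reduction and the largest-hole decomposition are correct as far as they go: the identity for $\sum_{\mathbf n\in\mathcal B}P_{\mathbf n}/P_{\mathbf 0}$ checks out, $H(x_0)\ge 1$, and the ``energy'' estimate $b_{N-N_c}/b_{M+1}\le e^{-\epsilon_1 N}$ follows correctly from Lemma~\ref{Incomplete Gamma estimates} once $\delta<(R^2-1+c)/R^2$. But the step you yourself flag as the obstacle is a genuine gap, and the mechanism you propose for closing it is wrong. The claim that $\rho_t=b_t/b_{t+1}$ is bounded away from $1$ outside a window of width $O(\sqrt N\log N)$ around $NR^2$ is false: writing $1-\rho_t=\mathbb P(\mathrm{Pois}(NR^2)=t)/\bigl(Q(t+1,NR^2)(1-Q(t,NR^2))\bigr)$, one finds $1-\rho_t\asymp\bigl(|t-NR^2|\vee\sqrt N\bigr)/N$, so $\rho_t\to 1$ throughout any $o(N)$-neighbourhood of $NR^2$ and is bounded away from $1$ only at distance $\Theta(N)$. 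A window of width $\eta N$ admits partitions with $\sum_k\nu_k$ of order $\eta^2N^2$ confined to it, so ``window is short, hence few subsets'' does not close the argument; as written, the entropy of choosing which elements of $T$ to displace near $NR^2$ is not beaten.

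The bound $\max_{x_0}H(x_0)\le e^{o(N)}$ is nevertheless true, but for a different structural reason which your parametrisation obscures: since $T=\{N-N_c,\dots,x_0-1\}$ is an \emph{interval}, every $S$ in the sum has the form $S=(T\setminus A)\cup B$ with $B\subseteq\{1,\dots,N-N_c-1\}$, i.e.\ every \emph{added} index lies below $N-N_c<NR^2(1-\delta)$, in the regime where $b_{j}/b_{j+1}\le\kappa<1$ by \eqref{Incomplete Gamma ratio}. Hence a swap of $r$ elements costs at most $\prod_{b\in B}b_b/b_{N-N_c}\le\kappa^{r(r+1)/2}$, while the entropy of choosing $A$ is only $\binom{m}{r}\le N^{r}$ and that of $B$ is controlled by the partition function; summing gives $H(x_0)\le e^{O(\log^2N)}$. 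With that repair your route works, and $\epsilon=\epsilon_1/2$ suffices. Note that the paper sidesteps all of this: it observes that the bad event forces $\sum_{k=1}^{D}n_k\ge D\asymp N$ with $D=\lfloor NR^2(1-\delta)\rfloor-(N-N_c)$, that every relevant index then lies below $NR^2(1-\delta)$ so each unit of this sum costs a factor $\kappa<1$, and that the configurational entropy is just $p(L)\le\tilde\kappa^{-L/2}$ by Lemma~\ref{partition function bound}; the delicate region near $NR^2$, which is where your $H(x_0)$ lives, never enters. Your decomposition is more refined (it isolates exactly where the exponential gain comes from) but pays for it by having to control $H(x_0)$ separately.
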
	

	\begin{rmk}
		This Lemma in implies that there exist $\delta>0 $ such that conditioned on the overcrowding event, the probability that there exist $x>NR^2 (1-\delta) $, $x \notin J $ is exponentially small in $N $. 
	\end{rmk}
	
	\begin{proof}[Proof of Lemma~\ref{first bound}]
		
	We will choose $\delta >0 $ such that $R^2(1-\delta) +c-1 > 0 $ and $N  $ big enough such that $ D= \lfloor {NR^2(1-\delta)} \rfloor +N_c -N > 0 $. Notice that if there exists $ x \in \{1,\dots,N\} \backslash J$ such that $ x>NR^{2}{(1-\delta)}$ then 
	for $k=1,\dots, D $ the $k$-lowest term in $J$ is less than the $k$-lowest term in $J_{\mathbf{0}}$ so we have
	\begin{equation*}
		\sum_{k=1}^{D} x_k \leq \sum_{k=1}^{D} \left[N -N_c +k\right] - D .
	\end{equation*}
	 Hence 
	
	\begin{equation} \label{BoundSumNk}
		\sum_{k=1}^{D} n_k = \sum_{k=1}^{D} (N-N_c+k-x_k) \geq  D .
	\end{equation}
	
	We use Lemma~\ref{series rep of incomplete} to get
	\begin{equation*}
		\frac{Q(n,NR^2)}{Q(n+1,NR^2)} = \frac{\sum_{i=0}^{n-1} \frac{(NR^2)^i}{i!}}{\sum_{i=0}^{n} \frac{(NR^2)^i}{i!}} = 1 - \frac{1}{1 + \sum_{i=1}^{n} \frac{n}{NR^2}\cdot \frac{n-1}{NR^2} \cdots \frac{i+1}{NR^2}} . 
	\end{equation*}

	Hence for $ n < NR^2 (1-\delta) $ there exists $0< \kappa <1 $ such that 
	\begin{equation} \label{Incomplete Gamma ratio}
		\frac{Q(n,NR^2)}{Q(n+1,NR^2)} <\kappa .
	\end{equation}
	And in particular
	\begin{equation} \label{BoundQForSmalln}
		Q(n,NR^2) \leq \kappa^{NR^2(1-\delta)-n} Q(NR^2(1-\delta),NR^2) \leq  \kappa^{NR^2(1-\delta)-n} .
	\end{equation}

	Now, combining \eqref{BoundSumNk},  \eqref{BoundQForSmalln} and \eqref{probability quatient}, we get that for $ L \geq D$ :
	\begin{align*}
		\mathbb{P}_c\left( \sum_{k=1}^{D} (n(J))_k =L  \right) &\leq P_{\mathbf{0}}  \sum_{n_1,\dots, n_{D}\colon \sum_{k=1}^{D}n_k = L} \prod_{k=1}^{D} \frac{Q(N-N_c+k-n_k, NR^{2} )}{Q(N-N_c+k, NR^{2} )}  \\ 
		&\leq P_{\mathbf{0}} \cdot \kappa^{L} \cdot p(L)
	\end{align*}
	where $ p(n) $ is the partition function (see Appendix~\ref{partition function}).
	Hence, according to Lemma~\ref{partition function bound}, for some $ 0<\tilde{\kappa} <1 $ and for $ N $ big enough $ \mathbb{P} \left( \sum_{k=1}^{D} (n(J))_k =L\right) \leq P_{\mathbf{0}} \cdot \tilde{\kappa}^L$ so 
	\begin{equation*}
		\mathbb{P}_c \left(\exists x\in \{1,\dots,N\}\backslash J : x>NR^2 (1-\delta) \right) \leq e^{-\epsilon N} P_{\mathbf{0}} .
	\end{equation*}

	\end{proof}

	\begin{Lemma} \label{bound on big sum}
		There exists $ M>0 $ big enough such that 
		\begin{equation*}
			\mathbb{P}_c \left(\sum_k (n(J))_k >M \log(N) \right) \leq P_{\mathbf{0}} \cdot N^{-10} . 
		\end{equation*}
	\end{Lemma}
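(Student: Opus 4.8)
The plan is to follow the same scheme as in Lemma~\ref{first bound}, but now to bound the probability of a large \emph{total} mass $\sum_k n_k$, rather than just the mass accumulated in the first $D$ coordinates. The starting point is again the exact ratio formula \eqref{probability quatient}: for any admissible $\mathbf{n}$,
\begin{equation*}
	\frac{P_{\mathbf{n}}}{P_{\mathbf{0}}} = \prod_{k=1}^{N_c} \frac{Q(N-N_c+k-n_k, NR^{2})}{Q(N-N_c+k, NR^{2})} \cdot \frac{1 - Q(N-N_c+k, NR^{2})}{1- Q(N-N_c+k-n_k, NR^{2})} .
\end{equation*}
By Lemma~\ref{first bound} we may, up to an error of $e^{-\epsilon N}P_{\mathbf{0}}$, restrict attention to configurations with $x_k \le NR^2(1-\delta)$ for all $k$ that were moved, i.e.\ to configurations where every index $N-N_c+k-n_k$ with $n_k>0$ lies below $NR^2(1-\delta)$. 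On that range the second factor $\frac{1-Q(\cdot)}{1-Q(\cdot)}$ is bounded above by a constant (since $Q$ there is already exponentially close to $1$, both numerator and denominator are comparable), so the product is controlled by $\prod_k \frac{Q(N-N_c+k-n_k,NR^2)}{Q(N-N_c+k,NR^2)}$, and by the geometric decay \eqref{Incomplete Gamma ratio}--\eqref{BoundQForSmalln} this is at most $\kappa^{\sum_k n_k}$ for some $\kappa\in(0,1)$.

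Next I would do the union bound over all $\mathbf{n}$ with $\sum_k n_k = L$. The number of such vectors is at most $p(L)$, the partition function, since admissible $\mathbf{n}$ are weakly decreasing nonnegative integer sequences summing to $L$ (the constraint $n_1\le N-N_c$ only shrinks this set). Hence
\begin{equation*}
	\mathbb{P}_c\!\left(\sum_k n(J)_k = L\right) \le P_{\mathbf{0}}\cdot C\cdot \kappa^{L} p(L) + e^{-\epsilon N} P_{\mathbf{0}},
\end{equation*}
and then Lemma~\ref{partition function bound} gives $\kappa^L p(L) \le \tilde\kappa^{\,L}$ for some $\tilde\kappa\in(0,1)$ once $L$ is large. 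Summing over $L > M\log N$ yields a geometric tail $\sum_{L>M\log N}\tilde\kappa^{\,L} = O(\tilde\kappa^{\,M\log N}) = O(N^{-M\log(1/\tilde\kappa)})$, which is $\le N^{-10}$ once $M$ is chosen with $M\log(1/\tilde\kappa) > 10$. The residual $e^{-\epsilon N}P_{\mathbf{0}}$ terms, summed over at most $N\cdot M\log N$ values of $L$ in the relevant range (or simply over all $L\le N(N-N_c)$), are still $o(N^{-10})P_{\mathbf{0}}$, so they are harmless; absorbing constants gives the claimed bound $\mathbb{P}_c(\sum_k n(J)_k > M\log N)\le P_{\mathbf{0}}\cdot N^{-10}$.

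The main obstacle is the bookkeeping around the second factor in the ratio \eqref{probability quatient}: when $n_k$ is large, the index $N-N_c+k-n_k$ can be small (close to $1$), where $Q(\cdot,NR^2)$ is \emph{not} close to $1$ and the factor $\frac{1-Q(N-N_c+k,NR^2)}{1-Q(N-N_c+k-n_k,NR^2)}$ could in principle be large. The resolution is that $1-Q(n,NR^2)=Q^{\mathrm{c}}$-type quantity is bounded below away from $0$ uniformly (it is at least $1-Q(N-N_c+1,NR^2)$, a fixed positive constant depending only on $R,c$, since $N-N_c+1 \sim (1-c)N < R^2 N$), while the numerator is at most $1$; so this factor is $O(1)$ uniformly, and one only needs this crude bound. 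After checking this uniform lower bound on $1-Q(n,NR^2)$ for $n$ in the relevant range — which follows from the standard Gaussian-type asymptotics of the incomplete gamma function recalled in Appendix~\ref{incomplete gamma appendix}, since $(1-c)N$ is strictly below the transition point $R^2 N$ — the rest is the routine partition-function tail estimate above.
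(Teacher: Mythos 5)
Your proposal follows the same route as the paper: restrict via Lemma~\ref{first bound} to configurations with $n_k=0$ for $k>D$, bound $P_{\mathbf{n}}/P_{\mathbf{0}}$ by $\kappa^{\sum_k n_k}$ using \eqref{Incomplete Gamma ratio}, count configurations with $\sum_k n_k=L$ by $p(L)$, and sum the resulting geometric tail. The one step where your justification does not hold up as written is the treatment of the factors $\frac{1-Q(N-N_c+k,NR^2)}{1-Q(N-N_c+k-n_k,NR^2)}$. First, in the relevant range $n<NR^2(1-\delta)$ the quantity $Q(n,NR^2)$ is exponentially close to $0$, not to $1$ (it is the lower tail of a Poisson$(NR^2)$ variable). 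Second, your lower bound on the denominator by $1-Q(N-N_c+1,NR^2)$ is not valid: by monotonicity of $Q$ in its first argument one only gets $1-Q(N-N_c+k-n_k,NR^2)\geq 1-Q(N-N_c+k,NR^2)$, not a comparison with the $k=1$ term. Moreover, bounding each such factor merely by a constant $C>1$ is not enough, since the product runs over up to $\sum_k n_k = L$ nontrivial factors and would contribute $C^L$, which must then be beaten by $\kappa^L$ --- not automatic for an unspecified $C$. The clean resolution, which is what the paper uses, is that Lemma~\ref{Incomplete gamma monotonicity} gives $Q(N-N_c+k-n_k,NR^2)\leq Q(N-N_c+k,NR^2)$, so each of these factors is exactly $\leq 1$ and can simply be dropped from \eqref{probability quatient}. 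With that one-line replacement your argument is the paper's proof.
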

	
	\begin{proof} 
		By Lemma~\ref{first bound} we have that except from a set of probability at most $ P_{\mathbf{0}} \cdot e^{-\epsilon N} $, for each $ k=1,\dots,N_c$ either $n_k=0 $ or $N-N_c+k< NR^{2}(1-\delta) $, i.e. outside this set for all $ {k> D=\lfloor {NR^2(1-\delta)} \rfloor +N_c -N} $, $ n_k =0$. Equivalently, $ n_{D+1}=0 $ . Hence we conclude that
		\begin{align*}
			\mathbb{P}_c&\left(\sum_{k=1}^{N_c} (n(J))_{k} >M\log(N)\right)   \\ 
			&\leq \mathbb{P}_c\left(\sum_{k=1}^{D} (n(J))_k > M \log(N) , (n(J))_{D+1}=0 \right) + e^{-\epsilon N}P_{\mathbf{0}} .
		\end{align*}
	
	Now, we have for $\mathbf{n} $ such that $  \sum_{k}^{D} n_{k} >M\log(N) $ and $n_{D+1}=0 $:
	\begin{align*}
		\frac{P_{\mathbf{n}}}{P_{\mathbf{0}}} &=  \prod_{k=1}^{\min(D, N_c)} \frac{ Q(N-N_c+k-n_k, NR^{2})} {Q(N-N_c+k, NR^{2})} \frac{1 - Q(N-N_c+k, NR^{2} )}{1- Q(N-N_c+k-n_k, NR^{2} )} \\
		&\leq \prod_{k=1}^{\min(D, cN)} \frac{ Q(N-N_c+k-n_k, NR^{2})} {Q(N-N_c+k, NR^{2})} 
	\end{align*}
	and by \eqref{Incomplete Gamma ratio} we get 
	\begin{equation*}
		\frac{P_{\mathbf{n}}}{P_{\mathbf{0}}} \leq \kappa^{\sum_{k} n_k} .
	\end{equation*}
	Hence there exists $\tilde{\kappa}<1 $ such that for $ L \geq  M \log(N) $ and $ N $ big enough
	\begin{equation*}
		\frac{\sum_{\mathbf{n} \colon \sum n_k =L} P_{\mathbf{n}}}{P_{\mathbf{0}}} \leq \kappa^L p(L) \leq \tilde{\kappa}^L .
	\end{equation*}
	So choosing $ M >\frac{10}{-\log(\tilde{\kappa})} $ we conclude that for big enough $N$ 
	\begin{equation*}
		\frac{\mathbb{P}_c\left(\sum_k (n(J))_k >M\log(N)\right)}{P_{\mathbf{0}}} = \sum_{\mathbf{n} \colon \sum_{k} n_{k}>M\log(N)}\frac{ P_{\mathbf{n}}}{P_{\mathbf{0}}} \leq \sum_{L=\lfloor M\log(N) \rfloor}^{\infty} \tilde{\kappa}^L \leq N^{-10}.
	\end{equation*}

	\end{proof}

	\begin{Thm} \label{set distribution}
		There exists a set of vectors, $\mathcal{B}_N$, such that ${\mathbb{P}_c \left(n(J) \notin \mathcal{B}_N \right) \leq K P_{\mathbf{0}} \cdot  N^{-10}} $ and for $\mathbf{n}\in \mathcal{B}_{N}$ and $ k = \log\frac{R^{2}}{1-c}>0$:
		\begin{equation}
			\frac{P_{\mathbf{n}}}{P_{\mathbf{0}}} = e^{-k\sum_{j=1}^{N_c}n_j} \left(1+O\left(\frac{\log^{3}N}{N}\right)\right).
		\end{equation}
		
	\end{Thm}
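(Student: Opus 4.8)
The plan is to take $\mathcal{B}_N$ to be the set of admissible vectors $\mathbf n$ with $\sum_{j=1}^{N_c} n_j \le M\log N$, where $M$ is the constant from Lemma~\ref{bound on big sum}; the estimate $\mathbb{P}_c(n(J)\notin\mathcal{B}_N) \le P_{\mathbf 0}\,N^{-10}$ is then exactly that lemma (so $K=1$ works). The point of this choice is a monotonicity remark: since $N-N_c \ge n_1 \ge n_2 \ge \cdots \ge 0$, any vector with $\sum_j n_j = L$ has $n_k = 0$ for every $k > L$. Thus for $\mathbf n\in\mathcal{B}_N$ all nonzero coordinates lie at positions $k \le M\log N$, and — writing $z := NR^2$ and $m_k := N-N_c+k$ — in \eqref{probability quatient} only the factors with $n_k\ge 1$ are nontrivial; for those the integers $j$ that will occur (with $m_k-n_k \le j \le m_k-1$) all satisfy $j = N-\lfloor cN\rfloor + O(\log N) = N(1-c) + O(\log N)$. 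Since $R^2 > 1-c$, this keeps $\rho_j := j/z$ inside a fixed compact subinterval of $(0,1)$, uniformly in $N$ and in the admissible $\mathbf n$. This is the only point at which the overcrowding hypothesis is used, and it is what will make the relevant power series converge geometrically.

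The next step is to split $P_{\mathbf n}/P_{\mathbf 0}$, via \eqref{probability quatient}, into a product of factors $\prod_{j=m_k-n_k}^{m_k-1} Q(j,z)/Q(j+1,z)$ and a product of factors $(1-Q(m_k,z))/(1-Q(m_k-n_k,z))$, the products being over the (at most $M\log N$) indices $k$ with $n_k\ge 1$. The second product is negligible: for integer $a$, $Q(a,z)$ is the probability that a $\mathrm{Poisson}(z)$ variable is at most $a-1$, and here $a-1 = N(1-c)+O(\log N)$ lies below $z=NR^2$ by a factor bounded away from $1$, so a Chernoff bound gives $Q(m_k,z),\,Q(m_k-n_k,z) = O(e^{-\varepsilon N})$ for some $\varepsilon>0$; hence each such factor is $1+O(e^{-\varepsilon N})$ and the whole second product is $1+O(N^{-10})$.

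The core estimate is for $Q(j,z)/Q(j+1,z)$. Using the series representation of the incomplete gamma function (Lemma~\ref{series rep of incomplete}), as already in the derivation of \eqref{Incomplete Gamma ratio}, one has
\begin{equation*}
\frac{Q(j,z)}{Q(j+1,z)} = \frac{1}{1+t_j}, \qquad \frac{1}{t_j} \;=\; \sum_{l=1}^{j}\ \prod_{d=0}^{l-1}\frac{j-d}{z} \;=\; \sum_{l=1}^{j} \rho_j^{\,l}\prod_{d=0}^{l-1}\Bigl(1-\frac{d}{j}\Bigr).
\end{equation*}
Every summand is at most $\rho_j^{\,l}$ and $\rho_j \le \rho < 1$ uniformly, so the tail $\sum_{l>C\log N}$ is $O(\rho^{C\log N}) = O(N^{-10})$ once $C$ is large enough; for $l\le C\log N$ one has $\prod_{d=0}^{l-1}(1-d/j) = 1+O(l^2/N)$, and summing the resulting geometric-type series gives $1/t_j = \frac{\rho_j}{1-\rho_j} + O(1/N)$, with the error controlled by the convergent series $\sum_l l^2\rho^l$. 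Rearranging, $1+t_j = \rho_j^{-1}+O(1/N)$, so $Q(j,z)/Q(j+1,z) = \rho_j\bigl(1+O(1/N)\bigr)$; and since $\rho_j = \frac{N-\lfloor cN\rfloor + O(\log N)}{NR^2} = \frac{1-c}{R^2}\bigl(1+O(\log N/N)\bigr)$, this equals $e^{-k}\bigl(1+O(\log N/N)\bigr)$ with $k = \log\frac{R^2}{1-c}$.

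To finish, I would multiply these back up: the first product in $P_{\mathbf n}/P_{\mathbf 0}$ consists of exactly $\sum_j n_j \le M\log N$ factors, each equal to $e^{-k}$ times $1+O(\log N/N)$, hence it equals $e^{-k\sum_j n_j}\cdot\prod(1+O(\log N/N)) = e^{-k\sum_j n_j}\bigl(1+O(\log^2 N/N)\bigr)$; combining with the $1+O(N^{-10})$ from the second product gives $P_{\mathbf n}/P_{\mathbf 0} = e^{-k\sum_{j=1}^{N_c} n_j}\bigl(1+O(\log^3 N/N)\bigr)$ as claimed (this bookkeeping in fact produces an $O(\log^2 N/N)$ error, comfortably inside the stated bound). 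The one genuinely delicate point — and the main obstacle — is getting the estimate of the series defining $1/t_j$ to hold uniformly over the whole $O(\log N)$-wide window of indices $j$ involved; everything there hinges on $\rho_j$ being bounded away from $1$, i.e. on the assumption $R^2>1-c$.
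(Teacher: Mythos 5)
Your proof is correct, and its skeleton is the same as the paper's: you take $\mathcal{B}_N=\{\mathbf{n}\colon\sum_k n_k\le M\log N\}$ via Lemma~\ref{bound on big sum}, observe that monotonicity of $\mathbf{n}$ confines the nontrivial factors of \eqref{probability quatient} to $O(\log N)$ indices near $N(1-c)$, and dispose of the $(1-Q)$ factors as $1+O(e^{-\varepsilon N})$ each. Where you genuinely diverge is in the central estimate. The paper evaluates each $Q(N-N_c+j-n_j,NR^2)$ individually using the uniform asymptotics of the normalised incomplete gamma function (Lemma~\ref{Incomplete Gamma estimates}, resting on the erfc representation) and then Taylor-expands the exponents to extract $e^{-n_j\log\frac{R^2}{1-c}}$ with an $O((jn_j+n_j^2)/N)$ error per factor. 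You instead telescope $Q(m_k-n_k,z)/Q(m_k,z)$ into single-step ratios $Q(j,z)/Q(j+1,z)$ and pin each one down as $\rho_j\left(1+O(1/N)\right)$ directly from the finite series representation (Lemma~\ref{series rep of incomplete}), by summing a geometric-type series; this sharpens the crude one-sided bound \eqref{Incomplete Gamma ratio}, which the paper uses only for the a priori localisation lemmas. Your route is more elementary — for this theorem it avoids the uniform asymptotics of $\Gamma(a,z)$ entirely — while the paper's route has the advantage of working verbatim for non-integer parameters (relevant to the generalisation with $\alpha\ne 0$, $b\ne 1$ mentioned in the introduction, where the finite-sum identity is unavailable). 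Your bookkeeping correctly yields $O(\log^2 N/N)$, comfortably within the stated $O(\log^3 N/N)$, and you correctly identify that the whole argument hinges on $\rho_j$ staying in a compact subinterval of $(0,1)$ uniformly over the $O(\log N)$-window of indices, which is exactly where the hypothesis $R^2>1-c$ is used.
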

	
	\begin{proof}
		According to Lemma \ref{bound on big sum} there exists constants  $ M,K>0 $ (depending only on $c,R $) such that the set  $ {\mathcal{B}_N = \{\mathbf{n} \colon \sum_k n_{k} \leq M\log(N)\} } $ satisfies $\frac{\sum_{\mathbf{n}\notin \mathcal{B}_N} P_{\mathbf{n}}}{P_{\mathbf{0}}} \leq K N^{-10} $. 
		
		For $ \mathbf{n} \in \mathcal{B}_N$ we have
		
		\begin{align*}
			\frac{P_{\mathbf{n}}}{P_{\mathbf{0}}} &= \prod_{j=1}^{N_c} \frac{Q(N-N_c+j-n_j, NR^{2} )}{Q(N-N_c+j, NR^{2} )} \cdot \frac{1 - Q(N-N_c+j, NR^{2}}{1- Q(N-N_c+j-n_j, NR^{2} )}\\
			&=\prod_{j=1}^{\lfloor M\log(N) \rfloor} \frac{Q(N-N_c+j-n_j, NR^{2} )}{Q(N-N_c+j, NR^{2} )} \cdot \frac{1 -Q(N-N_c+j, NR^{2} )}{1- Q(N-N_c+j-n_j, NR^{2} )}  .
		\end{align*}
		
		Since for $j<M \log(N) $ we have $N-cN+j-n_j \leq NR^2(1-\delta)$ for $N $ big enough and by Lemma~\ref{Incomplete Gamma estimates} we get:
		
		\begin{align*}
			Q(N-N_c +j -n_j,NR^2) &= \frac{e^{-NR^2+(N-N_c+j-n_j) (1+\log \frac{NR^2}{N-N_c+j-n_j})}}{\sqrt{2 \pi}} \cdot  \frac{1}{\frac{NR^2}{N-N_c+j-n_j} -1 } \cdot \\
			&\cdot  \frac{1}{\sqrt{N-N_c+j-n_j}} \left(1 + O(N^{-1})\right) \\
			&= A_{N,c,j} e^{(N-N_c+j-n_j)(1+\log \frac{NR^2}{N-N_c+j-n_j})} \left(1+O\left(\frac{n_j}{N}\right)\right) ,
		\end{align*}
		where $A_{N,c,j}$ does not depend on $n_j $. Hence, for some $\epsilon >0 $,

		\begin{align*}
			\frac{P_{\mathbf{n}}}{P_{\mathbf{0}}} &= \prod_{j=1}^{\lfloor M\log(N) \rfloor} e^{(N-N_c +j-n_j)(1+\log \frac{NR^2}{N-N_c+j-n_j}) - (N-N_c +j)(1+\log \frac{NR^2}{N-N_c+j})} \\
			&\cdot \left(1+O(\frac{n_j}{N}) + O(e^{-\epsilon N})\right) \\
			&= \prod_{j=1}^{\lfloor M\log(N) \rfloor} e^{-n_j +(N-N_c+j)\log \frac{N-N_c+j}{N-N_c+j-n_j} -n_j\log \frac{NR^2}{N-N_c+j-n_j}} \cdot \left(1+O(\frac{n_j}{N})\right) \\
			&=  \prod_{j=1}^{\lfloor M\log(N) \rfloor} e^{-n_j +(N-N_c+j)(\frac{n_j}{N-N_c+j} + O(\frac{n_j^2}{N^2}))-n_j \log \left(\frac{R^2}{1-c} + O(\frac{j+n_j}{N})\right)} \cdot \left(1+O\left(\frac{n_j}{N}\right)\right) \\
			&= \prod_{j=1}^{\lfloor M\log(N) \rfloor} e^{-n_j \log \frac{R^2}{1-c} + O(\frac{jn_j+n_j^2}{N})} \cdot \left(1+O\left(\frac{n_j}{N}\right)\right) \\
			&= \left(\frac{1-c}{R^2}\right)^{\sum_j n_j} \left(1+O\left(\frac{\log^{3} N}{N}\right) \right) .
		\end{align*}

		This completes the proof.
	\end{proof}

	\section{Probability of the Overcrowding Event} \label{Overcrowding Probability}
	
	In this section we will estimate the probability of the overcrowding event 
	\begin{equation*}
		\mathbb{P}\left(\# \{\lambda\colon \lambda >R \} = N_c \right) =\sum_{\#J=N_c} \tilde{P}_J .
	\end{equation*}
		
	By Theorem \ref{unconditioned set probability} we have
	\begin{equation*}
		\tilde{P}_{\mathbf{0}} = \prod_{k=0}^{N-N_c}(1-a_k) \prod_{k=N-N_c+1}^{N} a_k
	\end{equation*}
	where $ a_k = \frac{\Gamma(k+1,NR^2)}{k!} =Q(k+1,NR^2)$. We begin by estimating the first product. Based on Lemma \ref{Incomplete gamma monotonicity} we have
	\begin{equation} \label{inner eigenvalues prob bound first}
		\prod_{k=0}^{N-N_c}(1-Q(k+1,NR^2)) \geq \left(1-Q(N-N_c+1,NR^2)\right)^{N-N_c+1} .
	\end{equation}
	
	Now, Lemma \ref{Incomplete Gamma estimates} gives as that
	\begin{equation*}
		Q(N-N_c+1,NR^2) = \frac{e^{-NR^2+(N-N_c+1)(1+\log\frac{NR^2}{N-N_c+1})}}{\sqrt{2\pi (N-N_c+1)}(\frac{NR^2}{N-N_c+1}-1)} \left(1+O\left(\frac{1}{N}\right)\right).
	\end{equation*}
	So for some constants $A,\epsilon >0$ (that might depends on $c,R$) we have
	\begin{equation*}
		Q(N-N_c+1,NR^2) \leq A e^{-\epsilon N} .
	\end{equation*}
	Substituting into \eqref{inner eigenvalues prob bound first} we get 
	\begin{equation} \label{inner eigenvalues prob bound}
		1 \geq \prod_{k=0}^{N-N_c}(1-Q(k+1,NR^2)) \geq 1-AN e^{-\epsilon N} .
	\end{equation}
	
	Now we note that  
	\begin{equation} \label{outer eigenvalues prob estimate}
		 \prod_{N-N_c+1}^{N} Q(k+1,NR^2) = \mathbb{P}_{\mathcal{G}_{N_c}} (\forall \lambda \colon \abs{\lambda}>R ) .
	\end{equation}  
	Thus we are left to estimate $ \frac{\mathbb{P}(\#J = N_c)}{\tilde{P}_{\mathbf{0}}} = \frac{\sum_{\mathbf{n}}P_{\mathbf{n}}}{P_{\mathbf{0}}} $.
	By Theorem \ref{set distribution} we have 
	\begin{equation*}
		\frac{\sum_{\mathbf{n}}P_{\mathbf{n}}}{P_{\mathbf{0}}} = \sum_{\mathbf{n}}\left(\frac{R^2}{1-c}\right)^{-\sum_{j=1}^{N}n_j} \left(1+O\left(\frac{\log^{3}N}{N}\right)\right) .
	\end{equation*}
	Hence 
	\begin{align*}
		\mathbb{P}(\#J = N_c) &= P_{\mathbf{0}} \sum_{l=0}^{\infty}\sum_{\mathbf{n}\colon \sum_{j=1}^{N} N_j =l} \left(\frac{R^2}{1-c}\right)^{-l} \left(1+O\left(\frac{\log^{3}N}{N}\right)\right) \\
		&= P_{\mathbf{0}} \sum_{l=0}^{\infty} p_{N_c}(l) \left(\frac{R^2}{1-c}\right)^{-l} \left(1+O\left(\frac{\log^{3}N}{N}\right)\right)
	\end{align*}
	where $p_{N_c}(l) $ is the number of ways to write $l $ as a sum of natural numbers no larger than $N_c$ where order does not matter. We compare this number to the partition function $ p(l)$ [See Appendix~\ref{partition function}]. As $p_{N_c}(l) $ is counting a subset of all the ways to write $l $ as a sum of natural numbers we have $0\leq p_{N_c}(l) \leq p(l)$. In addition since for $l<N_c$ whenever we write $l $ as a sum of natural numbers they all are less than $ N_c$ we have $p_{N_c}(l)=p(l)$. Hence
	\begin{equation*}
		\sum_{l=0}^{\infty} p(l)\left(\frac{R^2}{1-c}\right)^{-l} \geq \frac{\mathbb{P}(\#J = N_c)}{P_{\mathbf{0}}}\left(1+O\left(\frac{\log^{3}N}{N}\right)\right) \geq \sum_{l=0}^{N_c} p(l)\left(\frac{R^2}{1-c}\right)^{-l} .
	\end{equation*}
	
	Based on Lemma \ref{partition function bound} for big enough values of $N_c$ we have $ \sum_{l=N_c+1}^{\infty} p(l) \leq \left(\frac{R^2}{1-c}\right)^{-\frac{1}{2}N_c} $ and so for big enough $N$ : 
	\begin{equation} \label{ratio total prob to P0}
		\frac{\mathbb{P}(\#J = N_c)}{P_{\mathbf{0}}} = 	\sum_{l=0}^{\infty} p(l)\left(\frac{R^2}{1-c}\right)^{-l} \left(1+O\left(\frac{\log^{3}N}{N}\right)\right).
	\end{equation}
	
	Combining \eqref{inner eigenvalues prob bound}, \eqref{outer eigenvalues prob estimate} and \eqref{ratio total prob to P0} we conclude that
		\begin{align*}
			\mathbb{P} (\# \{\lambda: &\abs{\lambda} >R\} = N_c) = \\
			&\mathbb{P}_{\mathcal{G}_{N_c}} (\forall \lambda \colon \abs{\lambda}>R ) \cdot 	\sum_{l=0}^{\infty} p(l)\left(\frac{R^2}{1-c}\right)^{-l} \left(1+O\left(\frac{\log^{3}N}{N}\right)\right)
		\end{align*}
	which completes the proof of Theorem~\ref{overcrowding probability}.

	\section{Conditional distribution of eigenvalues} \label{CondDistSection}

	In this section we will use Theorem \ref{set distribution} to investigate the conditional distribution of the eigenvalues in three different regions, and show that with suitable rescaling it converges to determinantal point processes. Those are the region inside the disk, $\abs{z} < R $, the region outside of the disk not close to the boundary, $ \abs{z} >R+\epsilon $, where there is no rescaling required and the region outside the disk near the boundary where there is a normalisation of order $N$, i.e. $ \abs{z} = R+O(\frac{1}{N}) $. We begin with the first two cases.

	\begin{prop} \label{Ginibre regions}
		For any natural number $k$, and any bounded Lipschitz function with compact support $f \colon \mathbb{C}^k \to \mathbb{C}$, the expectation $E_f = \mathbb{E}_{\{\lambda_1,\dots,\lambda_k\} \sim \mathcal{G}_{N,c,R}^{\wedge k} }f(\lambda_1,\dots,\lambda_k)$,  satisfies:
		\begin{enumerate}
			\item Suppose that $\operatorname{supp}f \subseteq \{z\in \mathbb{C} \colon \abs{z}<R\}^k $, then
			\begin{equation*}
				\abs{E_f - \mathbb{E}_{(\lambda_1,\dots,\lambda_k) \sim \mathcal{Y}^{\wedge k} }(f(\lambda_1,\dots, \lambda_k)) }  = O\left(\frac{\log N}{N}\right)
			\end{equation*}
			where $ \mathcal{Y}\sim \sqrt{\frac{N-N_c}{N}}\cdot \mathcal{G}_{N-N_c} $ is distributed as a Ginibre Ensemble of size $N-N_c $, scaled by $\sqrt{\frac{N-N_c}{N}} $.

			\item  Suppose that for some $\epsilon >0$, $\operatorname{supp}f \subseteq \{z\in \mathbb{C} \colon \abs{z}>R+\epsilon\}^k $, then
			\begin{equation*}
				\abs{E_f - \mathbb{E}_{\left(\lambda_1,\dots,\lambda_k\right)\sim \mathcal{G}_{N}^{\wedge k} 
				 } \left(f(\lambda_1,\dots,\lambda_k)\right)} = O\left(\frac{\log N}{N}\right) .
			\end{equation*}
		\end{enumerate} 
	\end{prop}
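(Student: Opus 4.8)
The plan is to run the mixture representations of Propositions~\ref{conditioned mixture} and~\ref{conditioned mixture inner} against the weight estimate of Theorem~\ref{set distribution}. Fix $k$ and $f$, write $M:=N-N_c$, and introduce the Ginibre building blocks $\varphi_j(z):=\sqrt{N^{j+1}/(\pi j!)}\,e^{-N\abs z^2/2}z^j$, so that $K_N(z,w)=\sum_{j=0}^{N-1}\varphi_j(z)\overline{\varphi_j(w)}$ while the kernel of $\mathcal Y=\sqrt{M/N}\,\mathcal G_M$ is $K_{\mathcal Y}(z,w)=\sum_{j=0}^{M-1}\varphi_j(z)\overline{\varphi_j(w)}$; note also that $\tilde\phi_j=\varphi_j/\sqrt{1-Q(j+1,NR^2)}$ and $\phi_j=\varphi_j/\sqrt{Q(j+1,NR^2)}$. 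Since $\operatorname{supp}f$ is compact and contained in $\{\abs z<R\}^k$ (part~1), respectively in $\{\abs z>R+\epsilon\}^k$ (part~2), the definition of $\mathcal G_{N,c,R}^{\wedge k}$ expresses $E_f$, up to a fixed combinatorial normalisation, as an integral of $f$ against the $k$-point correlation function of the restriction of $\mathcal G_{N,c,R}$ to $D(0,R)$, respectively to $\mathbb C\setminus D(0,R)$; the same is true of $\mathbb E_{\mathcal Y^{\wedge k}}f$ and $\mathbb E_{\mathcal G_N^{\wedge k}}f$ on the support of $f$, with the same normalisation, so it is enough to compare these correlation functions. By Proposition~\ref{conditioned mixture inner} (resp.~\ref{conditioned mixture}) the relevant correlation function equals $\sum_{\mathbf n}P_{\mathbf n}\det\!\big(K^{J_{\mathbf n}}(z_i,z_j)\big)_{i,j=1}^k$ with $\sum_{\mathbf n}P_{\mathbf n}=1$ and $K^{J_{\mathbf n}}$ as in~\eqref{J kernel inner} (resp.~\eqref{J kernel}); writing $E_f^{(\mathbf n)}$ for the contribution of $J_{\mathbf n}$, we have $E_f=\sum_{\mathbf n}P_{\mathbf n}E_f^{(\mathbf n)}$, and the strategy is to show that the single term $\mathbf n=\mathbf 0$ (i.e.\ $J_{\mathbf 0}=\{M+1,\dots,N\}$) already produces the claimed limit up to an exponentially small error, and that the remaining terms contribute $O(\log N/N)$ once weighted by the geometrically decaying weights $P_{\mathbf n}/P_{\mathbf 0}=\big(\tfrac{1-c}{R^2}\big)^{\sum_j n_j}\big(1+O(\log^3 N/N)\big)$ supplied by Theorem~\ref{set distribution}.

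For the main term one uses the Poisson large-deviation estimates for the incomplete gamma function (Appendix~\ref{incomplete gamma appendix}). In part~1 every index $j$ occurring in $K^{J_{\mathbf 0}}$ satisfies $j\le M<NR^2$ (this is where the hypothesis $R^2>1-c$ enters), so $Q(j+1,NR^2)=O(e^{-\eta N})$ uniformly in such $j$ for some $\eta>0$; hence $\tilde\phi_j=\varphi_j\big(1+O(e^{-\eta N})\big)$ and $K^{J_{\mathbf 0}}(z,w)=\big(1+O(e^{-\eta N})\big)K_{\mathcal Y}(z,w)$ on $D(0,R)^2$, the indicator $\mathds{1}_{\abs z,\abs w<R}$ in~\eqref{J kernel inner} being irrelevant because $\operatorname{supp}f$ lies at positive distance from $\{\abs z=R\}$; expanding the $k\times k$ determinant and using $K_{\mathcal Y}(z,z)\le N/\pi$ then gives $E_f^{(\mathbf 0)}=\mathbb E_{\mathcal Y^{\wedge k}}f+O(e^{-\eta N})$. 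In part~2 the same estimates show that, on $\{\abs z>R+\epsilon\}$, the functions $\phi_j$ with $j/N$ bounded below $(R+\epsilon)^2$ (and likewise the $\varphi_j$ with the same $j$) have $L^2$-mass $O(e^{-\eta N})$, whereas the $\phi_j$ with larger $j$ satisfy $\phi_j=\varphi_j\big(1+O(e^{-\eta N})\big)$; consequently $K^{J_{\mathbf 0}}$ and $K_N$, restricted to $\{\abs z,\abs w>R+\epsilon\}$, both coincide with $\sum_{j\ge N(R+\epsilon)^2}\varphi_j(z)\overline{\varphi_j(w)}$ up to an entrywise error $O(e^{-\eta N})$ on the main part and $O(e^{-\eta N})$ in $L^2$-mass on the remainder, and the determinant expansion gives $E_f^{(\mathbf 0)}=\mathbb E_{\mathcal G_N^{\wedge k}}f+O(e^{-\eta N})$.

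To handle the remaining terms, first record the uniform a priori bound: by Hadamard's inequality $\abs{\det(K^{J_{\mathbf n}}(z_i,z_j))}\le\prod_i K^{J_{\mathbf n}}(z_i,z_i)$, and since the $\tilde\phi_j$ (resp.\ $\phi_j$) are orthonormal on $D(0,R)$ (resp.\ on $\mathbb C\setminus D(0,R)$), integrating the product over $\operatorname{supp}f$ yields $\int_{\operatorname{supp}f}\prod_i K^{J_{\mathbf n}}(z_i,z_i)\,dz\le M^k$ (resp.\ $\le N_c^k$), so $\abs{E_f^{(\mathbf n)}}\le C_k\norm f_\infty$ uniformly in $\mathbf n$; together with $\sum_{\mathbf n\notin\mathcal B_N}P_{\mathbf n}=O(N^{-10})$ from Theorem~\ref{set distribution} this discards $\mathbf n\notin\mathcal B_N$ at cost $O(N^{-10})$. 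For $\mathbf n\in\mathcal B_N$, the set $J_{\mathbf n}$ is obtained from $J_{\mathbf 0}$ by replacing its $m$ smallest elements $M+1,\dots,M+m$ by $M+1-n_1,\dots,M+m-n_m$, where $m$ is the number of nonzero parts of $\mathbf n$; in particular $\#(J_{\mathbf n}\triangle J_{\mathbf 0})\le 2\sum_j n_j$ and every index in $J_{\mathbf n}\triangle J_{\mathbf 0}$ is $\le M+\sum_j n_j$. Expanding $\det(K^{J_{\mathbf n}})-\det(K^{J_{\mathbf 0}})$ by multilinearity of the determinant in its rows, bounding the $(k-1)$-minors by Hadamard together with the pointwise bounds $K^{J_{\mathbf n}}(z,z),K^{J_{\mathbf 0}}(z,z)=O(N)$ valid on $\operatorname{supp}f$, and using the $L^1$ estimates $\int_{\operatorname{supp}f}\big|(K^{J_{\mathbf n}}-K^{J_{\mathbf 0}})(z,z)\big|\le\sum_{j\in J_{\mathbf n}\triangle J_{\mathbf 0}}\int\abs{\tilde\phi_j}^2$ (and its two-variable analogue, again by orthonormality, together with the exponential smallness of the $L^2(\abs z>R+\epsilon)$-masses of the affected $\phi_j$ in part~2), one obtains $\abs{E_f^{(\mathbf n)}-E_f^{(\mathbf 0)}}=O\big(\norm f_\infty\,N^{-1}\sum_j n_j\big)$ in part~1 and $O\big(e^{-\eta N}\sum_j n_j\big)$ in part~2 (for the at most exponentially rare $\mathbf n\in\mathcal B_N$ with $\sum_j n_j$ of order $N$, for which the displaced indices could leave the droplet, one falls back on the a priori bound). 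Summing against the weights and invoking Lemma~\ref{partition function bound} to control $\sum_{\mathbf n\in\mathcal B_N}P_{\mathbf n}\sum_j n_j\le C\sum_{\ell\ge1}\ell\,p(\ell)\big(\tfrac{1-c}{R^2}\big)^\ell<\infty$, we conclude $E_f=E_f^{(\mathbf 0)}+O(\log N/N)$ in both cases, which together with the previous step proves the proposition.

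I expect this last step to be the main obstacle. The mixture weights decay geometrically, but the individual kernels $K^{J_{\mathbf n}}$ are \emph{not} close to $K^{J_{\mathbf 0}}$ in the uniform norm --- exchanging a single index perturbs the kernel by $O(N)$ pointwise --- so the comparison has to exploit that the difference of kernels is a low-rank operator whose building blocks have unit $L^2$-mass on the relevant region, combined with the exponential confinement of the boundary-layer functions $\phi_j$ (those with $j/N$ below the relevant threshold) to a shrinking annulus near $\abs z=R$. Carrying this out uniformly over all admissible $\mathbf n\in\mathcal B_N$, for which $\sum_j n_j$ may be as large as $M\log N$, is the delicate point; the uniform a priori bound on $E_f^{(\mathbf n)}$ and the orthonormality of the families $\tilde\phi_j$ and $\phi_j$ are the two inputs that make this uniformity available.
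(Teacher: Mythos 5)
Your proposal is correct, and its overall architecture coincides with the paper's: restrict to the relevant region, invoke the mixture representations of Propositions~\ref{conditioned mixture} and~\ref{conditioned mixture inner}, discard the indices outside $\mathcal{B}_N$ via Lemma~\ref{bound on big sum} together with a uniform a priori bound, identify $K^{J_{\mathbf{0}}}$ with the limiting Ginibre kernel up to exponentially small errors, and compare every other $K^{J_{\mathbf{n}}}$ with $K^{J_{\mathbf{0}}}$. Where you genuinely diverge is in the mechanism of that last comparison. The paper proves a \emph{pointwise} bound: Lemma~\ref{g maximum} shows, by a two-variable optimisation, that swapping the index $l_k$ for $l_k-n_k$ changes the kernel by only $O(n_k)$ in sup norm (a cancellation between two rank-one bumps, each of sup norm of order $\sqrt{N}$), whence $\sup\abs{K^{J}-K^{J_{\mathbf{0}}}}\le C\sum_k n_k\le CM\log N$ for $\mathbf{n}\in\mathcal{B}_N$; it then feeds $A=O(N)$, $B=O(\log N)$ into the generic comparison Lemma~\ref{determinantal process expectation bound} and never needs the geometric decay of the weights. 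You avoid any pointwise cancellation: the kernel difference is a sum of $\#(J_{\mathbf{n}}\triangle J_{\mathbf{0}})\le 2\sum_j n_j$ rank-one terms of unit $L^2$-mass, so its $L^1$-norm over the compact support is $O(\sum_j n_j)$ by Cauchy--Schwarz, and inserting this into one row of the determinant expansion gives $\abs{E_f^{(\mathbf{n})}-E_f^{(\mathbf{0})}}=O(N^{-1}\sum_j n_j)$; you then genuinely need the geometric weights of Theorem~\ref{set distribution} and Lemma~\ref{partition function bound} to conclude $\sum_{\mathbf{n}}P_{\mathbf{n}}\sum_j n_j=O(1)$. Both routes close: yours is more elementary (it bypasses Lemma~\ref{g maximum} entirely and in fact gives $O(1/N)$ for the $\mathcal{B}_N$ contribution), while the paper's yields a uniform kernel estimate, hence locally uniform convergence of the correlation functions, which your $L^1$ argument does not directly provide. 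Your closing worry that exchanging a single index perturbs the kernel by $O(N)$ pointwise is over-pessimistic --- each rank-one term has sup norm $O(\sqrt{N})$ and the difference of the two exchanged terms is only $O(n_k)$, which is precisely the content of Lemma~\ref{g maximum} --- so the uniform-norm route is viable after all; this does not affect the validity of your alternative. Finally, both your write-up and the paper's proof gloss over the normalisation mismatch between $\binom{N}{k}$ (for $\mathcal{G}_{N,c,R}^{\wedge k}$) and $\binom{N-N_c}{k}$ (for $\mathcal{Y}^{\wedge k}$) in part~1, which affects the statement by a constant factor $(1-c)^k$; this is an imprecision inherited from the statement rather than a defect specific to your argument.
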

	
	Before we get to the proof of the Proposition we will prove a general Lemma.
	
	\begin{Lemma} \label{determinantal process expectation bound}
		Let $\mathcal{Y}_1,\mathcal{Y}_2 $ be determinantal point processes on $\mathbb{C}$ with kernels $K_1,K_2 $ with respect to the Lebesgue background measure respectively. Let $k $ be a natural number and $f \colon \mathbb{C}^k \to \mathbb{C}$ be a bounded Lipschitz function with support contained in $ S^k$ for a compact set $S\subseteq \mathbb{C} $.
		Assume that for all $x,y\in S $, $\abs{K_1(x,y)},\abs{K_2(x,y)} \leq A $ and $\abs{K_1(x,y)-K_2(x,y)}\leq B $.
		Denote for $m=1,2$, $E_{m,f} = \mathbb{E}(\sum f(y_1,\dots, y_k)) $ where the sum is over all the ordered subsets of $\mathcal{Y}_m $ of size $ k$. Then for some constant $C$ depending on $k,f $
		
		\begin{equation} \label{expectation of sum difference equation}
			\abs{E_{1,f}-E_{2,f}} \leq CA^{k-1}B .
		\end{equation}
		
		In addition, if we assume that each of those processes contains exactly $ N$ points, then 
		\begin{equation} \label{expectation of random subsets difference equation}
			\abs{\mathbb{E}_{\{\lambda_1,\dots,\lambda_k\} \sim \mathcal{Y}_{1}^{\wedge k} }f(\lambda_1,\dots,\lambda_k) - \mathbb{E}_{\{\lambda_1,\dots,\lambda_k\} \sim \mathcal{Y}_{2}^{\wedge k} }f(\lambda_1,\dots,\lambda_k)} \leq \frac{CA^{k-1}B}{N^{k}}.
		\end{equation}
	\end{Lemma}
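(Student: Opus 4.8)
The plan is to express both $E_{m,f}$ via the determinantal structure, so that the difference becomes a sum of differences of $k\times k$ determinants built from $K_1$ and $K_2$, and then bound each such difference using a telescoping (hybrid) argument together with the multilinearity of the determinant. Concretely, for a determinantal process $\mathcal{Y}_m$ with kernel $K_m$, the $k$-point correlation function is $\rho^{(k)}_m(y_1,\dots,y_k) = \det\bigl(K_m(y_i,y_j)\bigr)_{i,j=1}^k$, and
\[
E_{m,f} = \int_{S^k} f(y_1,\dots,y_k)\,\det\bigl(K_m(y_i,y_j)\bigr)_{i,j=1}^k \; \mathrm{d}y_1\cdots\mathrm{d}y_k .
\]
Hence $\abs{E_{1,f}-E_{2,f}} \le \norm{f}_\infty \int_{S^k} \abs{\det(K_1(y_i,y_j)) - \det(K_2(y_i,y_j))}\,\mathrm{d}y$. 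Here I only need $f$ bounded; the Lipschitz hypothesis is presumably there for other parts of the paper, not for \eqref{expectation of sum difference equation}.

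The first key step is a pointwise bound on $\abs{\det(K_1(y_i,y_j)) - \det(K_2(y_i,y_j))}$ in terms of $A$ and $B$. I would write $\det M_1 - \det M_2$ as a telescoping sum $\sum_{r=1}^k \bigl(\det \widetilde M^{(r)} - \det \widetilde M^{(r-1)}\bigr)$, where $\widetilde M^{(r)}$ has its first $r$ rows taken from $M_1$ and its remaining rows from $M_2$. Each consecutive difference changes a single row, from a row of $M_2$ to a row of $M_1$, and by multilinearity of the determinant in that row it equals $\det$ of a matrix whose $r$-th row is $(K_1 - K_2)(y_r,\cdot)$, hence has entries bounded by $B$, and whose other $k-1$ rows have entries bounded by $A$. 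Expanding that determinant (Leibniz formula, $k!$ terms, each a product of one entry $\le B$ and $k-1$ entries $\le A$) gives the bound $k!\,A^{k-1}B$ per term, so $\abs{\det M_1 - \det M_2} \le k\cdot k!\,A^{k-1}B$. Integrating over $S^k$ and multiplying by $\norm{f}_\infty$ yields \eqref{expectation of sum difference equation} with $C = k\cdot k!\,\norm{f}_\infty\,\abs{S}^k$ (where $\abs{S}$ is the Lebesgue measure of $S$), a constant depending only on $k$ and $f$ (and $S$, which is fixed by $f$).

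For the second claim \eqref{expectation of random subsets difference equation}, I would relate the expectation over a uniformly chosen ordered $k$-subset to $E_{m,f}$. If $\mathcal{Y}_m$ has exactly $N$ points almost surely, then conditionally on the configuration there are $N(N-1)\cdots(N-k+1)$ ordered $k$-subsets, so
\[
\mathbb{E}_{\{\lambda_1,\dots,\lambda_k\}\sim\mathcal{Y}_m^{\wedge k}} f(\lambda_1,\dots,\lambda_k) = \frac{1}{N(N-1)\cdots(N-k+1)}\, E_{m,f},
\]
using that $f$ is symmetric-sum-compatible (here the sum over ordered subsets in the definition of $E_{m,f}$ already accounts for the ordering, so the counting factor is exactly $N!/(N-k)!$). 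Since this normalising factor is the same for $m=1$ and $m=2$, dividing the bound \eqref{expectation of sum difference equation} by $N(N-1)\cdots(N-k+1) \ge$ (for $N$ large, up to a constant) $N^k$, possibly after absorbing the constant into $C$, gives \eqref{expectation of random subsets difference equation}.

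The only genuinely delicate point is the combinatorial determinant estimate in the middle step — making sure the telescoping is set up correctly and that the power of $A$ is exactly $k-1$ rather than $k$; everything else is bookkeeping. One should also note that when applying this lemma to non-compactly-supported kernels (as in Proposition~\ref{Ginibre regions}) the relevant $S$ is the compact support of $f$, and the bounds $A,B$ need only hold on $S\times S$, which is exactly how the hypothesis is phrased.
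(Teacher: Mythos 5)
Your proposal is correct and follows essentially the same route as the paper: both reduce to a pointwise bound $\abs{\det(K_1(y_i,y_j))-\det(K_2(y_i,y_j))}\le k\cdot k!\,A^{k-1}B$ via a telescoping/hybrid argument (you telescope over rows using multilinearity of the determinant, the paper telescopes over the factors of each permutation product after a Leibniz expansion --- the same estimate with the same constant), then integrate against $\abs{f}$ over its support, and normalise by the number of ordered $k$-subsets for the second claim. Your observation that only boundedness of $f$ is used, and the remark that the $k!$ discrepancy between your normalisation $N!/(N-k)!$ and the paper's $\binom{N}{k}$ is absorbed into $C$, are both consistent with the paper's treatment.
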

	
	\begin{rmk}
		We do not care about the dependence on $ k$ so we show an elementary proof here.	It is possible to get a better constant $C$, using Lemma~3.4.2 of \cite{IntroRandomMatrices}. 
	\end{rmk}
	\begin{proof}
		For $m=1,2 $ the $k$-point correlation function of $\mathcal{Y}_i $ is 
		\begin{align*}
			p_{m}^{(k)}(x_1,\dots,x_k) = \det \left(K_m(x_i,x_j)\right)_{i,j=1}^{k} = \sum_{\pi \in S_{k}} \sgn{\pi} \prod_{i=1}^{k} K_m\left(x_i,x_{\pi(i)}\right).
		\end{align*}
		Hence, for $x_1,\dots x_k \in S $,
		\begin{align} \label{correlation function bound}
			\big| p_{1}^{(k)} (x_1,\dots,x_k) &-p_{2}^{(k)} (x_1,\dots,x_k) \big| \nonumber \\
			&= \abs{\sum_{\pi \in S_{k}} \sgn{\pi} \left(\prod_{i=1}^{k} K_1\left(x_i,x_{\pi(i)}\right)-\prod_{i=1}^{k} K_2\left(x_i,x_{\pi(i)}\right)\right)} \nonumber \\
			&\leq  \sum_{\pi \in S_{k}} \abs{\prod_{i=1}^{k} K_1\left(x_i,x_{\pi(i)}\right)-\prod_{i=1}^{k} K_2\left(x_i,x_{\pi(i)}\right)}. 
		\end{align}
		We denote $K_m\left(x_i,x_{\pi(i)}\right)=K_{m}^{\pi,i} $ and then 
		\begin{align} \label{product difference}
			\abs{\prod_{i=1}^{k} K_1^{\pi,i}-\prod_{i=1}^{k} K_2^{\pi,i}} &= \abs{\sum_{j=1}^{k} \left[\prod_{i=1}^{j-1}K_1^{\pi,i}\right] \left(K_1^{\pi,j}-K_2^{\pi,j}\right) \left[\prod_{i=j+1}^{k}K_2^{\pi,i}\right]} \nonumber \\
			&\leq k A^{k-1}B .
		\end{align}
		Substituting \eqref{product difference} into \eqref{correlation function bound} we get 
		\begin{equation} \label{partition function difference final bound}
			\abs{p_{1}^{(k)} (x_1,\dots,x_k) -p_{2}^{(k)} (x_1,\dots,x_k)} \leq \sum_{\pi \in S_k}  k A^{k-1}B = k! \cdot kA^{k-1}B .
		\end{equation}
		Hence 
		\begin{align*}
			\abs{E_{1,f}-E_{2,f}} &\leq \int_{\mathbb{C}^k} \abs{f(x_1,\dots,x_k)} \abs{p_{1}^{(k)} (x_1,\dots,x_k) -p_{2}^{(k)} (x_1,\dots,x_k)} \mathrm{d}x_1 \cdots \mathrm{d}x_k \\
			&\leq k! k A^{k-1} B \cdot \operatorname{Vol}\left(\operatorname{supp}(f)\right) \max \abs{f} .
		\end{align*}
		That proves \eqref{expectation of sum difference equation}.
		
		Now, assuming each process contains exactly $ N$ points, we have
		\begin{equation*}
			\mathbb{E}_{\{\lambda_1,\dots,\lambda_k\} \sim \mathcal{Y}_{m}^{\wedge k} }f(\lambda_1,\dots,\lambda_k) = \binom{N}{k}^{-1} \int_{\mathbb{C}^k} f(x_1,\dots,x_k) p_{m}^{k}(x_1,\dots,x_k) \mathrm{d}x_1 \cdots \mathrm{d}x_k .
		\end{equation*}
		Hence,
		\begin{align*}
			&\abs{\mathbb{E}_{\{\lambda_1,\dots,\lambda_k\} \sim \mathcal{Y}_{1}^{\wedge k} }f(\lambda_1,\dots,\lambda_k) - \mathbb{E}_{\{\lambda_1,\dots,\lambda_k\} \sim \mathcal{Y}_{2}^{\wedge k} }f(\lambda_1,\dots,\lambda_k)} \\
			&\leq \binom{N}{k}^{-1} k! k A^{k-1} B \cdot \operatorname{Vol}\left(\operatorname{supp}(f)\right) \max \abs{f}.
		\end{align*}
		Since we have $\binom{N}{k} = \prod_{i=1}^{k}\frac{N-i}{k-i} \geq \left(\frac{N}{k}\right)^k $ we conclude that for $ C(k,f)=C>0$ we have
		\begin{equation*}
			\abs{\mathbb{E}_{\{\lambda_1,\dots,\lambda_k\} \sim \mathcal{Y}_{1}^{\wedge k} }f(\lambda_1,\dots,\lambda_k) - \mathbb{E}_{\{\lambda_1,\dots,\lambda_k\} \sim \mathcal{Y}_{2}^{\wedge k} }f(\lambda_1,\dots,\lambda_k)} \leq \frac{C A^{k-1} B}{N^k} .
		\end{equation*}
		
	\end{proof}

	Now we can go back to prove the two parts of Proposition~\ref{Ginibre regions}.
	
	\begin{proof}[Proof of part 1. of Proposition~\ref{Ginibre regions}]
		As $\operatorname{supp}f \subseteq \{z\in \mathbb{C} \colon \abs{z}<R\}^k$, to calculate $E_f $ we can look at the process restricted to the disk $ \{z\in \mathbb{C} \colon \abs{z}<R\}$. According to Theorem~\ref{conditioned mixture inner}, this is a mixture of determinantal point processes indexed by subsets $ {J \subseteq \{0,1,\dots,N-1\}}$ of size $N_c$ with kernels 
		\begin{equation*}
			K_{\text{inner}}^{J}(z,w) = \sum_{k \notin J} \phi_k (z) \overline{\phi_k (w)} \mathds{1}_{\abs{z},\abs{w}<R} .
		\end{equation*}
		
		According to Lemma~\ref{bound on big sum} with probability $1 - O(N^{-10})$ we have that $\sum_k \left(n(J)_k\right) <M\log(N) $ and in particular every $k=1,\dots, N_c$ satisfies $ n_k < M\log(N) $ and moreover for $k> M\log(N) $, $n_k=0$. Now, we denote $l_k = N-N_c+k$ and by \eqref{inner orthogonal pol def} we have, for all $x,y\in D(0,R) $, $k \leq M \log N$,
		\begin{align*} 
			\delta_k &= \abs{\tilde{\phi}_{l_k-n_k}(x) \overline{\tilde{\phi}_{l_k-n_k}(y)} - \tilde{\phi}_{l_k}(x) \overline{\tilde{\phi}_{l_k}(y)}} \\
			&=\abs{\frac{N^{l_k-n_k+1} \abs{xy}^{l_k-n_k}}{\pi \gamma(l_k-n_k+1,NR^2)} - \frac{N^{l_k+1} \abs{xy}^{l_k}}{\pi \gamma(l_k+1,NR^2)}}e^{-\frac{N}{2}(\abs{x}^2+\abs{y}^2)} \\
			&\leq  N^{l_k-n_k+1}\abs{xy}^{l_k-n_k}e^{-\frac{N}{2}(\abs{x}^2+\abs{y}^2)} \abs{\frac{1}{\gamma(l_k-n_k+1,NR^2)} - \frac{N^{n_k}\abs{xy}^{n_k}}{\gamma(l_k+1,NR^2)}}. \\
		\end{align*}
		By the triangle inequality we get 
		\begin{align} \label{delta k estimate}
			\delta_k &\leq N^{l_k-n_k+1}\abs{xy}^{l_k-n_k}e^{-\frac{N}{2}(\abs{x}^2+\abs{y}^2)} \bigg(\abs{\frac{1}{\Gamma(l_k-n_k+1)} -\frac{N^{n_k}\abs{xy}^{n_k}}{\Gamma(l_k+1)} } \nonumber \\
			&+ \abs{\frac{1}{\Gamma(l_k-n_k+1)} -\frac{1}{\gamma(l_k-n_k+1,NR^2)}}  + \abs{\frac{N^{n_k}\abs{xy}^{n_k}}{\Gamma(l_k+1)} - \frac{N^{n_k}\abs{xy}^{n_k}}{\gamma(l_k+1,NR^2)}} \bigg) \nonumber \\
			&= A_1 + N^{l_k-n_k+1}\abs{xy}^{l_k-n_k}e^{-\frac{N}{2}(\abs{x}^2+\abs{y}^2)}\left(A_2 + A_3\right).
		\end{align}
		Then we have by Lemma~\ref{Incomplete Gamma estimates}
		\begin{align} 
			A_2 &= \frac{ \abs{\gamma(l_k-n_k+1,NR^2)-\gamma(l_k-n_k+1)}} {\gamma(l_k-n_k+1,NR^2)\Gamma(l_k-n_k+1)}  \\ 
			&= \frac{\Gamma(l_k-n_k+1)-\gamma(l_k-n_k+1,NR^2)}{\gamma(l_k-n_k+1,NR^2)} \nonumber \\
			&\leq K e^{-NR^2+(l_k-n_k+1)(1+\log\frac{NR^2}{l_k-n_k+1})} \nonumber 
		\end{align}
		for some constant $K>0$. Hence, as $l_k - N(1-c),n_k =O(\log N) $,
		\begin{equation} \label{A2 estimate}
			A_2 \leq  e^{N\left(-R^2+(1-c)(1+\log \frac{R^2}{1-c})\right) (1+O(\log(N))) }  .
		\end{equation}
		
		Similarly 
		\begin{equation} \label{A3 estimate}
			A_3 \leq  N^{n_k}\abs{xy}^{n_k}e^{N\left(-R^2+(1-c)(1+\log \frac{R^2}{1-c})\right) (1+O(\log(N))) }  .
		\end{equation}
		
		Meanwhile we have 
		\begin{align*}
			A_1 &= \frac{N^{l_k-n_k+1}\abs{xy}^{l_k-n_k}e^{-\frac{N}{2}(\abs{x}^2+\abs{y}^2)}}{\pi} \abs{\frac{1}{\Gamma(l_k-n_k+1)}-\frac{N^{n_k}\abs{xy}^{n_k}}{\Gamma(l_k+1)}} \\
			&= \frac{N^{l_k-n_k+1}\abs{xy}^{l_k-n_k}e^{-\frac{N}{2}(\abs{x}^2+\abs{y}^2)}}{\pi \Gamma(l_k+1)} \abs{\frac{\Gamma(l_k+1)}{\Gamma(l_k-n_k+1)}-N^{n_k}\abs{xy}^{n_k}}  .
		\end{align*}
		We will denote 
		\begin{equation*}
			g(s,t)= \left(st\right)^{\frac{1}{2}(l_k - n_k)} e^{-\frac{N}{2}(s+t)} \left(\frac{1}{\Gamma(l_k-n_k+1)}- \frac{N^{n_k}\left(st\right)^{\frac{1}{2}n_k}}{\Gamma(l_k+1)}\right)
		\end{equation*}
		so that $A_1 = \frac{N^{l_k-n_k+1}}{\pi} \abs{g\left(\abs{x}^2,\abs{y}^2\right)}$. In order to bound $A_1$ we will look for extrema of $g(s,t)$ in the quadrant $s,t\geq0 $.
		
		\begin{Lemma} \label{g maximum}
			For $g $ as above, 
			\begin{equation*}
				\max_{s,t>0} \abs{N^{l_k-n_k}g(s,t)} \leq \frac{n_k }{N(1-c)} \left(1+O\left(\frac{n_k}{\sqrt{l_k}}\right)\right).
			\end{equation*}
			
		\end{Lemma}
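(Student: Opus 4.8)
The plan is to reduce the two-variable maximization to a one-variable problem on the diagonal $\{s=t\}$, and then to recognize the diagonal restriction, after rescaling, as the pointwise difference of two adjacent Gamma densities, whose sup-norm distance is small precisely because $l_k$ and $l_k-n_k$ are close. If $n_k=0$ then $g\equiv 0$ and there is nothing to prove, so assume $n_k\geq 1$; throughout, $m:=l_k-n_k$ and $l_k$ are of order $N(1-c)$ while $n_k=O(\log N)$.

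First I would factor $g(s,t)=\alpha(s,t)-\beta(s,t)$, where
\[
\alpha(s,t)=\frac{(st)^{m/2}e^{-N(s+t)/2}}{\Gamma(m+1)},\qquad \beta(s,t)=\frac{N^{n_k}(st)^{l_k/2}e^{-N(s+t)/2}}{\Gamma(l_k+1)}
\]
are positive on $(0,\infty)^2$. Since $\alpha$ and $\beta$ tend to $0$ along the two axes (using $m\geq 1$) and at infinity, $g$ extends continuously to $[0,\infty)^2$, vanishes on both axes and at infinity, so $\max_{s,t>0}\abs{g}$ is positive and attained at an interior critical point. Using $\partial_s\log\alpha=\tfrac{m}{2s}-\tfrac N2$ and $\partial_s\log\beta=\tfrac{l_k}{2s}-\tfrac N2$, the equations $\partial_s g=\partial_t g=0$ read $\alpha\bigl(\tfrac ms-N\bigr)=\beta\bigl(\tfrac{l_k}{s}-N\bigr)$ and $\alpha\bigl(\tfrac mt-N\bigr)=\beta\bigl(\tfrac{l_k}{t}-N\bigr)$ (all evaluated at $(s,t)$); writing $r:=\beta/\alpha$ (and noting $r\neq 1$, else $m=l_k$) both give $s=t=\frac{m-r l_k}{N(1-r)}$. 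Thus every interior critical point lies on the diagonal, and $\max_{s,t>0}\abs{N^{l_k-n_k}g(s,t)}=\max_{s>0}\abs{N^{m}g(s,s)}$. On the diagonal, with $x=Ns$,
\[
N^{m}g(s,s)=\frac{x^{m}e^{-x}}{\Gamma(m+1)}-\frac{x^{l_k}e^{-x}}{\Gamma(l_k+1)}=:F(x)-G(x),
\]
the pointwise difference of the densities of the $\mathrm{Gamma}(m+1)$ and $\mathrm{Gamma}(l_k+1)$ distributions.

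It then remains to bound $\sup_{x>0}\abs{F(x)-G(x)}$. I would interpolate through the Gamma family $\rho_a(x)=x^{a}e^{-x}/\Gamma(a+1)$: since $\partial_a\rho_a(x)=\rho_a(x)\bigl(\log x-\psi_0(a+1)\bigr)$, with $\psi_0$ the digamma function,
\[
\abs{F(x)-G(x)}\leq\int_m^{l_k}\sup_{y>0}\Bigl(\rho_a(y)\,\abs{\log y-\psi_0(a+1)}\Bigr)\,\mathrm{d}a.
\]
For fixed $a$, a Laplace analysis about the mode $y=a$, using Stirling, $\rho_a(a)=(2\pi a)^{-1/2}(1+O(1/a))$, the expansion $\psi_0(a+1)=\log a+O(1/a)$, the identity $\rho_a(a(1+u))=\rho_a(a)\,e^{a(\log(1+u)-u)}$, and the fact that the region $\abs{u}\geq\tfrac12$ contributes only $e^{-\epsilon a}$, shows that the inner supremum equals $\rho_a(a)\cdot\sup_u\abs{u}e^{-au^{2}/2}\cdot(1+O(a^{-1/2}))=\frac{e^{-1/2}}{\sqrt{2\pi}\,a}\bigl(1+O(a^{-1/2})\bigr)$. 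Integrating over $a\in[m,l_k]$ and using $\log(l_k/m)\leq n_k/m$, $m=l_k-n_k$, $l_k\geq N(1-c)$ and $\tfrac{e^{-1/2}}{\sqrt{2\pi}}<1$ then gives $\sup_{x>0}\abs{F(x)-G(x)}\leq\frac{n_k}{N(1-c)}\bigl(1+O(n_k/\sqrt{l_k})\bigr)$, the desired bound.

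The main obstacle is the uniform Laplace estimate of $\sup_{y>0}\rho_a(y)\abs{\log y-\psi_0(a+1)}$: one needs a relative error $O(a^{-1/2})$ holding uniformly for $a\in[m,l_k]$, which forces one to keep the Stirling and digamma corrections as well as the quadratic term in $\log(1+u)=u-u^{2}/2+\cdots$, which at the optimal scale (where $\abs{u}$ is of order $a^{-1/2}$) is itself of relative size $O(a^{-1/2})$, and to verify that the tail $\abs{u}\geq\tfrac12$ is exponentially negligible. Once that is in place, the integration in $a$ and the comparison with $n_k/(N(1-c))$ are elementary, while the reduction to the diagonal and the identification as a difference of Gamma densities are routine.
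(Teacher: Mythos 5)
Your proposal is correct, and while the first half coincides with the paper's strategy, the second half takes a genuinely different and arguably cleaner route. Like the paper, you reduce to the diagonal by showing every interior critical point satisfies $s=t$; your derivation via $s=t=\frac{m-rl_k}{N(1-r)}$ with $r=\beta/\alpha$ is a tidier version of the paper's computation (and you correctly dispose of the degenerate cases $n_k=0$ and $r=1$). The divergence is in the one-dimensional bound. The paper solves the critical-point equation asymptotically, writing $s_{\max}=\frac{l_k}{N}(1+\epsilon)$ with $\epsilon=\pm l_k^{-1/2}+O(n_k/l_k)$ (under an a posteriori assumption $1/\epsilon\gg n_k$), and then evaluates $g$ at $s_{\max}$ via Stirling; the cancellation producing the factor $n_k$ comes from the difference $\prod_{i=0}^{n_k-1}(l_k-i)-(Ns_{\max})^{n_k}$. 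You instead recognize the diagonal restriction as $\rho_m-\rho_{l_k}$ and use the interpolation $\abs{\rho_m-\rho_{l_k}}\leq\int_m^{l_k}\sup_y\abs{\partial_a\rho_a(y)}\,\mathrm{d}a$, so that the factor $n_k$ arises transparently from the length of the integration interval, and all the hard analysis is concentrated in one uniform Laplace estimate $\sup_y\rho_a(y)\abs{\log y-\psi_0(a+1)}=\frac{e^{-1/2}}{\sqrt{2\pi}\,a}(1+O(a^{-1/2}))$, which you have only sketched but whose ingredients (Stirling for $\rho_a(a)$, the digamma expansion, the cubic term in $\log(1+u)$, and the exponentially small tail) you identify correctly and which are routine to complete. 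Your approach avoids locating the maximizer at all, decouples the $n_k$-dependence from the Laplace analysis, and even yields the sharper constant $e^{-1/2}/\sqrt{2\pi}<1$; the cost is working with the Gamma density at non-integer parameter and invoking digamma asymptotics, neither of which is an obstacle. No gap to report.
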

		\begin{proof}[Proof of Lemma~\ref{g maximum}]

			Note that as $g(0,t)=g(\infty,t)=g(s,0)=g(s,\infty)=0 $ we can look only at points where $ \nabla g(s,t) = (0,0)$. We compute 
			\begin{align*}
				0 &= \frac{\partial g}{\partial s} = g(s,t)\left[\frac{l_k - n_k}{2s} -\frac{N}{2} - \frac{\frac{N^{n_k}n_k\left(st\right)^{\frac{1}{2}n_k}}{2s \Gamma(l_k+1)}}{\frac{1}{\Gamma(l_k-n_k+1)}- \frac{N^{n_k}\left(st\right)^{\frac{1}{2}n_k}}{\Gamma(l_k+1)}} \right] , \\
				0 &= \left(\frac{l_k-n_k}{s}-N\right) \left(\frac{1}{\Gamma(l_k-n_k+1)}- \frac{N^{n_k}\left(st\right)^{\frac{1}{2}n_k}}{\Gamma(l_k+1)} \right) -\frac{N^{n_k}n_k\left(st\right)^{\frac{1}{2}n_k}}{s \Gamma(l_k+1)} .
			\end{align*}
			Rearranging the terms we get
			\begin{align*}
				n_k N^{n_k} \left(st\right)^{\frac{1}{2}n_k} &= \left(l_k-n_k-Ns\right)\left(\prod_{i=0}^{n_k-1} (l_k-i) - N^{n_k} \left(st\right)^{\frac{1}{2}n_k}\right) , \\
				l_k - n_k -Ns &=\frac{n_k N^{n_k} \left(st\right)^{\frac{1}{2}n_k}}{\prod_{i=0}^{n_k-1} (l_k-i) - N^{n_k} \left(st\right)^{\frac{1}{2}n_k}} .
			\end{align*}
		
			Similarly, the equation $ \frac{\partial g}{\partial t} =0$  gives $ l_k - n_k -Nt =\frac{n_k N^{n_k} \left(st\right)^{\frac{1}{2}n_k}}{\prod_{i=0}^{n_k-1} (l_k-i) - N^{n_k} \left(st\right)^{\frac{1}{2}n_k}} $ and hence the extremal points satisfy $s=t$. Substituting it back we get 
			\begin{equation*}
				n_k N^{n_k} s^{n_k} = \left(l_k-n_k-Ns\right)\left(\prod_{i=0}^{n_k-1} (l_k-i) - N^{n_k} s^{n_k}\right) .
			\end{equation*}
			All the solutions satisfy $s=\frac{l_k}{N} +o(1) $ as for other values of $s\geq 0 $ the right-hand side is much bigger than the left-hand side. Hence we denote $s=\frac{l_k}{N}(1+\epsilon)$ and then we have
			\begin{align*}
				n_k l_k^{n_k} (1+\epsilon)^{n_k} &= l_k^{n_k+1} (1-\frac{n_k}{l_k} -1 - \epsilon) \left(\prod_{i=0}^{n_k-1} \left(\frac{l_k-i}{l_k}\right) - (1+\epsilon)^{n_k}\right) \\
				\frac{n_k}{l_k} (1+O(\epsilon n_k)) &= \left(\epsilon+\frac{n_k}{l_k}\right) \left(\epsilon n_k +\frac{n_k(n_k-1)}{l_k} \right) \left(1 + O\left( \epsilon n_k + \frac{n_k^2}{l_k}  \right)\right) .
			\end{align*}
			Assuming the solution will satisfy $ \frac{1}{\epsilon} \gg n_k $ we have
			\begin{align*}
				\frac{1}{l_k} &= \epsilon^2 \left(1+O\left(\epsilon n_k +\frac{n_k }{\epsilon l_k} \right)\right) .
			\end{align*}
			Hence
			\begin{equation*}
				\epsilon = \pm \frac{1+O\left(\epsilon n_k +\frac{n_k }{\epsilon l_k} \right)}{\sqrt{l_k}} = \pm \frac{1}{\sqrt{l_k}}  + O\left(\frac{n_k}{l_k}\right)  .
			\end{equation*}
			The maximum of $\abs{g(s,t)}$ is attained at $ s_{\max}=t_{\max}= \frac{l_k}{N}(1+\epsilon) = \frac{l_k \pm \sqrt{l_k} +O(n_k)}{N}  $. 
			Thus
			\begin{align*}
				\max_{s,t>0} \abs{N^{l_k-n_k}g(s,t)} &= e^{(l_k-n_k)\log \left(Ns_{\max}\right)-Ns_{\max}} \abs{\frac{1}{\Gamma(l_k-n_k+1)}- \frac{\left(Ns_{\max}\right)^{n_k}}{\Gamma(l_k+1)}} \\
				&= \frac{e^{\phi(-Ns_{\max})}}{\Gamma(l_k+1)} \abs{\prod_{i=0}^{n_k-1} (l_k-i) - \left(l_k \pm \sqrt{l_k} +O(n_k)\right)^{n_k}}
			\end{align*}
			where we denoted $\phi(x) = -x+ (l_k-n_k)\log x$. Since $\sqrt{l_k} \gg n_k $ we get
			\begin{align*}
				\max_{s,t>0} \abs{N^{l_k-n_k}g(s,t)} &= \frac{e^{\phi(-Ns_{\max})}}{ \Gamma(l_k+1)} \abs{\pm n_k l_k^{n_k-1} \sqrt{l_k} \left(1+O\left(\frac{n_k}{\sqrt{l_k}}\right)\right) } \\
				&\leq \frac{n_k l_k^{n_k-\frac{1}{2}} e^{\phi(-(l_k-n_k) \pm \sqrt{l_k} +O(n_k)) }}{\Gamma(l_k+1)} \left(1+O\left(\frac{n_k}{\sqrt{l_k}}\right)\right) \\
				&= \frac{n_k l_k^{n_k-\frac{1}{2}} e^{\phi(l_k-n_k) + \frac{1}{2(l_k-n_k)} (l_k+O(n_k\sqrt{l_k})) + O(\frac{1}{\sqrt{l_k}}) }}{\Gamma(l_k+1)} \left(1+O\left(\frac{n_k}{\sqrt{l_k}}\right)\right) \\
				&= \frac{n_k l_k^{n_k-\frac{1}{2}}  e^{-l_k+n_k +(l_k-n_k)\log (l_k-n_k)}  e^{ \frac{1}{2}}}{\Gamma(l_k+1)} \left(1+O\left(\frac{n_k}{\sqrt{l_k}}\right)\right) \\
				&= \frac{n_k  e^{-l_k+n_k +(l_k-n_k)\log (l_k-n_k)+n_k\log l_k}  e^{ \frac{1}{2}}}{\sqrt{l_k}\Gamma(l_k+1)} \left(1+O\left(\frac{n_k}{\sqrt{l_k}}\right)\right) .
			\end{align*}
			Using Stirling`s approximation we get
			\begin{align*}
				\max_{s,t>0} \abs{N^{l_k-n_k}g(s,t)} &\leq \frac{n_k  e^{-l_k+n_k +(l_k-n_k)\log (l_k-n_k)+n_k\log l_k}  e^{ \frac{1}{2}}}{\sqrt{2\pi}l_k \left(\frac{l_k}{e}\right)^{l_k}} \left(1+O\left(\frac{n_k}{\sqrt{l_k}}\right)\right) \\
				&\leq \frac{ n_k  e^{n_k +(l_k-n_k)\log (\frac{l_k-n_k}{l_k})}  }{l_k } \left(1+O\left(\frac{n_k}{\sqrt{l_k}}\right)\right) \\
				&= \frac{ n_k  e^{n_k +(l_k-n_k) \left(-\frac{n_k}{l_k} + O\left(\frac{n_k^2}{l_k^2}\right) \right)}  }{l_k } 	\left(1+O\left(\frac{n_k}{\sqrt{l_k}}\right)\right) \\
				&= \frac{n_k }{N(1-c)} \left(1+O\left(\frac{n_k}{\sqrt{l_k}}\right)\right) .
			\end{align*}
		\end{proof}
		 
		Using Lemma~\ref{g maximum} we get that
		\begin{equation} \label{A1 estimate}
			A_1 \leq N\max \abs{N^{l_k-n_k}g(s,t)} \leq \frac{n_k }{1-c} \left(1+O\left(\frac{n_k}{\sqrt{l_k}}\right)\right).
		\end{equation}

		Now, combining \eqref{delta k estimate}, \eqref{A2 estimate},\eqref{A3 estimate} and \eqref{A1 estimate} we get $\delta_k \leq C \left(n_k + e^{-\epsilon N}\right)$, for some constants $C,\epsilon> 0 $. Hence we have 
		\begin{equation*}
			\abs{K_{\operatorname{inner}}^{J}(z,w)-K_{\operatorname{inner}}^{J_{\mathbf{0}}}(z,w) } < C M \log(N) .
		\end{equation*}
		
		Now, we note that the determinantal process with kernel $K_{\operatorname{inner}}^{J_{\mathbf{0}}} $ is the Ginibre ensemble of size $N-N_c $ multiplied by $\sqrt{\frac{N-N_c}{N}} $ conditioned on the event that no eigenvalues that are bigger in absolute value than $ R \sqrt{\frac{N}{N-N_c}} >1 $. The complementary event, that there exist an eigenvalue bigger in absolute value than $ R \sqrt{\frac{N}{N-N_c}}$ happens with exponentially small probability in $N $, so we have
		
		\begin{equation*}
			\abs{K_{\operatorname{inner}}^{J}(z,w)-K_{\sqrt{\frac{N-N_c}{N}}\mathcal{G}_{N-N_c}}(z,w) } < \tilde{C} \log(N) .
		\end{equation*} 
		
		Hence, by Lemma~\ref{determinantal process expectation bound},  $\abs{E_f - \mathbb{E}^{\sqrt{\frac{N-N_c}{N}}\mathcal{G}_{N-N_c}}(f(\lambda_1,\dots, \lambda_k)) }  = O\left(\frac{\log N}{N}\right) $.

		\end{proof}
				
		\begin{proof}[Proof of part 2. of Proposition~\ref{Ginibre regions}]
		When we look at $f $ with support in $ \{ z\in \mathbb{C} | \abs{z}>R+\epsilon\}^k $, to calculate $E_f $ we need to consider only the distribution of eigenvalues outside the disk $D(0,R) $. According to Theorem~\ref{conditioned mixture} this is a mixture of determinantal point processes indexed by $ J \subseteq \{0,1,\dots,N-1\} $ of size $N_c $ with kernels $K^J(z,w)=\sum_{k\in J} \phi_{k}(z)\overline{\phi_k(w)} \mathds{1}_{\abs{z},\abs{w}>R}  $. Now, by Lemma~\ref{bound on big sum}, with probability at least $ 1-O(N^{-10}) $ the set $J $ is such that $\sum_k \left(n(J)_k\right) <M\log(N) $ for some constant $M>0 $, and by a similar computation to the one above we have $\abs{K^J(z,w)-K^{J_0}(z,w)}<C \log(N) $ for some constant $C>0 $. For $ \abs{z},\abs{w}>R+\epsilon $,  $\abs{K^{J_0}(z,w) - K^{\mathcal{G}_N}(z,w)} $ is exponentially small in $N $. And thus we conclude, by Lemma~\ref{determinantal process expectation bound},
		\begin{equation*}
			\abs{E_f - \mathbb{E}^{\mathcal{G}_N}(f(\lambda_1,\dots,\lambda_k))} = O\left(\frac{\log N}{N}\right) .
		\end{equation*}
	\end{proof}
	
	Now we will show convergence in the region $\abs{z}=R+O\left(\frac{1}{N}\right) $. The region $\abs{z} > R+\epsilon $ typically contains about $(1-R^2)N $ eigenvalues, and thus we expect the region $\abs{z}=R+O\left(\frac{1}{N}\right) $ to have about $(R^2-1+c)N $ eigenvalues. The circumference of the disk $D(0,R) $ is proportional to $R $ so we expect to get a normalised limit process when we zoom in by a factor of $ N\frac{R^2-1+c}{R} $. This is indeed what we find in the next proposition.
	
	\begin{prop} \label{hard wall limit distribution}
		For any natural number $ k $, and any bounded Lipschitz function with compact support in $\mathbb{C}_+^k $, $f \colon \mathbb{C}^k \to \mathbb{C} $ , denote 
		\begin{equation*}
			 h_N(\lambda_1,\dots,\lambda_k) = f\left(\frac{N(R^2-1+c)}{R}(\lambda_1-R),\dots, \frac{N(R^2-1+c)}{R}(\lambda_k-R)\right) .
		\end{equation*}
		Then, denoting $E_{f,N}= \mathbb{E} \left(\sum h_N(\lambda_1,\dots,\lambda_k)\right)$ where the sum is over (ordered) subsets of $k $ distinct eigenvalues of the conditioned process and $E_{\mathcal{X},f} =\mathbb{E} \left(\sum f(x_1,\dots,x_k)\right) $ where the sum is over all the (ordered) subsets of $\mathcal{X} $ of size $k$, we have
		\begin{equation*}
			E_{f,N} = E_{\mathcal{X},f} + O\left(\frac{\log^2(N)}{N}\right) .
		\end{equation*}
		Above $\mathcal{X}$ is the determinantal process on the right half plane defined by the kernel 
		\begin{equation*}
			K_{\mathcal{X}}(z,w)= \frac{1-(z+\overline{w}+1)e^{-z-\overline{w}}}{\pi \left(z+\overline{w}\right)^2},
		\end{equation*}
		with respect to the Lebesgue background measure.
	\end{prop}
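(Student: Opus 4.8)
The plan is to imitate the proof of part~1 of Proposition~\ref{Ginibre regions}: first reduce the mixture to the single determinantal kernel $K^{J_{\mathbf{0}}}$ built from the top $N_c$ indices $J_{\mathbf{0}}=\{N-N_c+1,\dots,N\}$, and then run a Laplace / Riemann-sum analysis of $K^{J_{\mathbf{0}}}$ in the microscopic window $|z|=R+O(1/N)$. Write $a_N=\frac{N(R^2-1+c)}{R}$ for the zoom factor, so $h_N(\lambda_\bullet)=f(a_N(\lambda_\bullet-R))$; after the substitution $\lambda_i=R+\zeta_i/a_N$ the quantity $E_{f,N}$ is the expectation of $\sum f$ over the mixture of the rescaled kernels $(\zeta,\omega)\mapsto \frac1{a_N^2}K^{J}(R+\tfrac{\zeta}{a_N},R+\tfrac{\omega}{a_N})$, and for $\zeta,\omega$ in a fixed compact subset of $\mathbb{C}_+$ and $N$ large one has $|z|,|w|>R$, so the indicator in \eqref{J kernel} is inactive on $\operatorname{supp}f$.

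\emph{Reduction to $K^{J_{\mathbf{0}}}$.} As only points outside $D(0,R)$ matter, Proposition~\ref{conditioned mixture} applies, and by Theorem~\ref{set distribution} all but an $O(N^{-10})$ fraction of the $\mathbb{P}_c$-mass sits on $J$ with $n(J)\in\mathcal{B}_N$; for such $J$ only the $O(\log N)$ smallest indices of $J$, which cluster near $N-N_c\approx N(1-c)$, differ from those of $J_{\mathbf{0}}$. For those I would prove the edge analogue of \eqref{delta k estimate}--\eqref{A1 estimate}, i.e. $|\phi_\ell(z)\overline{\phi_\ell(w)}-\phi_{\ell-n}(z)\overline{\phi_{\ell-n}(w)}|=O(n)$ uniformly on $\{|z|,|w|\in R+O(1/N)\}$ (using the edge asymptotics of $\phi_k$ from the next step rather than Lemma~\ref{g maximum}), whence $\frac1{a_N^2}|K^{J}-K^{J_{\mathbf{0}}}|=O(\log N/N^2)$; Lemma~\ref{determinantal process expectation bound} via \eqref{expectation of sum difference equation} (with $A=O(1)$, $B=O(\log N/N^2)$) then leaves only the scaling limit of $K^{J_{\mathbf{0}}}$.

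\emph{Scaling limit of $K^{J_{\mathbf{0}}}$.} Writing $k=Nt$, I would split $J_{\mathbf{0}}$ into three ranges: indices with $t>R^2+\epsilon$, where $\phi_k$ is supported near $|z|=\sqrt t>R$ and is exponentially small in the window; the crossover band $|k-NR^2|\le\sqrt N\log N$, whose total contribution to the unrescaled kernel is $O(N\log N)$, hence $O(\log N/N)$ after division by $a_N^2$; and the ``pile-up'' range $t\in(1-c,R^2)$, carrying the $N(R^2-1+c)$ excess eigenvalues and the limit. On the pile-up range, Stirling together with the incomplete-gamma estimate $\Gamma(k+1,NR^2)=(NR^2)^k e^{-NR^2}\frac{R^2}{R^2-t}(1+O(1/N))$ (a form of Lemma~\ref{Incomplete Gamma estimates}) gives, uniformly,
\begin{equation*}
 \phi_k(z)\overline{\phi_k(w)}=\frac{N(R^2-t)}{\pi R^2}\Big(\tfrac{z\bar w}{R^2}\Big)^k e^{-N(\Re(z\bar w)-R^2)}(1+o(1))=\frac{N(R^2-1+c)\,s}{\pi R^2}\,g(\zeta)\overline{g(\omega)}\,e^{-s(\zeta+\bar\omega)}(1+o(1)),
\end{equation*}
with $s=\frac{NR^2-k}{N(R^2-1+c)}\in(0,1)$ and $g(\zeta)=e^{iR^2\Im\zeta/(R^2-1+c)}$ unimodular and independent of $k$. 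Summing over $k$ (so $s$ sweeps $(0,1)$ in steps $\frac1{N(R^2-1+c)}$) turns $\frac1{a_N^2}K^{J_{\mathbf{0}}}(R+\tfrac{\zeta}{a_N},R+\tfrac{\omega}{a_N})$ into a Riemann sum for $g(\zeta)\overline{g(\omega)}\frac1\pi\int_0^1 s\,e^{-s(\zeta+\bar\omega)}ds$, and since $\frac1\pi\int_0^1 s e^{-\mu s}ds=\frac{1-(\mu+1)e^{-\mu}}{\pi\mu^2}$ this is $g(\zeta)\overline{g(\omega)}K_{\mathcal{X}}(\zeta,\omega)$ (using $\frac{N^2(R^2-1+c)^2}{a_N^2 R^2}=1$). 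The $g$-factors cancel in every $k\times k$ determinant, so the rescaled correlation functions of $K^{J_{\mathbf{0}}}$ converge to $\det(K_{\mathcal{X}}(\zeta_i,\zeta_j))$; a last use of Lemma~\ref{determinantal process expectation bound} and bookkeeping of the errors give $E_{f,N}=E_{\mathcal{X},f}+O(\log^2 N/N)$.

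\emph{The main obstacle} is the uniform asymptotics of $\phi_k(z)$ over the whole macroscopic range $k\in[N(1-c),N]$ with $z$ in the $1/N$-window around $|z|=R$: one has to treat the crossover at $k\approx NR^2$ (where the incomplete-gamma normalisation passes from the exponentially small ``boosted-tail'' regime to the bulk regime) precisely enough that the band $|k-NR^2|\lesssim\sqrt N\log N$ and the indices well above $NR^2$ are genuinely negligible after rescaling, and to keep the pile-up-range error uniform in $t$ down to $t=R^2-O(N^{-1/2})$; the remaining steps are the bookkeeping already set up in Sections~\ref{Eigenvlaue mixture probabilities} and~\ref{CondDistSection}.
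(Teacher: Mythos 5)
Your outline follows the paper's route: use Lemma~\ref{bound on big sum} (via the typical set $\mathcal{B}_N$) to reduce the mixture to the single rescaled kernel built from $J_{\mathbf{0}}=\{N-N_c+1,\dots,N\}$, analyze that kernel in the $1/N$-window around $R$ by a three-range split together with a Riemann-sum limit on the pile-up range, and finish with Lemma~\ref{determinantal process expectation bound}. The few slips are inessential: the paper cuts the tail at $k>NR^2+\sqrt{N}\log N$ rather than $t>R^2+\epsilon$ so that the three ranges actually partition, the reduction-to-$J_{\mathbf{0}}$ bound need only be $O(\log N/N)$ (each rescaled summand near $k=N(1-c)$ is $O(1/N)$ by Lemma~\ref{Incomplete Gamma estimates}, so no edge analogue of Lemma~\ref{g maximum} is required), and the ``main obstacle'' you flag is exactly where the paper does its delicate work in \eqref{summand estimate E2} and in the $E_1$ Riemann-sum error term.
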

	
	\begin{rmk}
		This kernel is positive definite and thus defines a determinantal point process since for any complex continuous function $ g $ with compact support in the right half plane we have
		\begin{align*}
			\iint \mathrm{d}z \mathrm{d}w K_{\mathcal{X}}(z,w) g(z)\overline{g(w)} &= \frac{1}{\pi}\iint \mathrm{d}z \mathrm{d}w \int_{0}^{1}   se^{-s(z+\overline{w})}  g(z)\overline{g(w)} \mathrm{d}s \\
			&=\frac{1}{\pi} \int_{0}^{1}  s \abs{\int \mathrm{d} z e^{-sz}g(z)}^2 \mathrm{d}s \geq 0 .
		\end{align*}
	\end{rmk}
	
	\begin{proof}[Proof of Proposition~\ref{hard wall limit distribution}]
		The sum is dependent only on eigenvalues outside of the disk $D(0,R) $ and as such we will look only at their distribution. According to Theorem~\ref{conditioned mixture} this is a mixture of determinantal point processes indexed by $ J \subseteq \{0,1,\dots,N-1\} $ of size $N_c $ with kernels $K^J(z,w)=\sum_{k\in J} \phi_{k}(z)\overline{\phi_k(w)} \mathds{1}_{\abs{z},\abs{w}>R}  $.
		
		We want to find the distribution of the variables $x_i = \frac{N(R^2-1+c)}{R}(\lambda_i-R) $. For ease of computation we will first look at the distribution of $\mathcal{Y}=\{y_1,\dots, y_N\} $ where $y_i = N(\lambda_i-R) $ . They are distributed as a mixture of determinantal point processes with the same probabilities and their kernels are
		\begin{align*}
			\tilde{K}^J_{\mathcal{Y}}(z,w) &= \frac{1}{N^2}K^J\left(R+\frac{1}{N}z,R+\frac{1}{N}w\right) \\
			&= \frac{1}{N^2} \sum_{k \in J} \frac{N^{k+1} e^{-\frac{1}{2}N \left(\abs{R+\frac{1}{N}z}^2+\abs{R+\frac{1}{N}w}^2\right)}}{\pi \Gamma(k+1,NR^2)} \left((R+\frac{1}{N}z)(R+\frac{1}{N}\overline{w})\right)^k \\
			&= \sum_{k\in J}\frac{N^{k-1} e^{-NR^2}R^{2k}}{\pi \Gamma(k+1,NR^2)} e^{-\frac{1}{2}R(z+\overline{z}+w+\overline{w})-\frac{\abs{z}^2+\abs{w}^2}{2N}} \left(1+\frac{z+\overline{w}}{NR}+\frac{z\overline{w}}{N^2R^2}\right)^k .
		\end{align*}
		The function $f $ is compactly supported so there exist a compact set $K_{\operatorname{supp}} \subseteq \mathbb{C} $ such that $\operatorname{supp}(f) \subseteq K_{\operatorname{supp}}^k  $. Thus it is enough to estimate the values of $\tilde{K}^{J}(z,w) $ for $ z,w \in K_{\operatorname{supp}} $.
		
		By Lemma~\ref{bound on big sum}, with probability at least $ 1-O(N^{-10}) $ the set $J $ is such that $\sum_k \left(n(J)_k\right) <M\log(N) $ for some constant $M>0 $. Hence for such values of $J $, for each $ k\in J\backslash J_0 \cup J_0 \backslash J $ we have $ \abs{k-(N-N_c)} \leq M\log(N) $. Hence, by Lemma~\ref{Incomplete Gamma estimates}
		\begin{align*}
			\frac{N^{k-1} e^{-NR^2}R^{2k}}{\pi \Gamma(k+1,NR^2)} &= \frac{\sqrt{2} e^{-(k+1)\log\frac{NR^2}{k+1}-k-1} \left(\frac{NR^2}{k+1}-1\right)\sqrt{k+1} N^{k-1} R^{2k}}{\sqrt{\pi} \Gamma(k+1)} \left(1+O\left(\frac{1}{N}\right)\right) \\
			&= \frac{ e^{(k+1)\log(k+1)-k\log(k)} (NR^2)^{-k-1} \left(\frac{NR^2}{k+1}-1\right)}{e\pi }   \\
			&\cdot \sqrt{1+\frac{1}{k}} N^{k-1} R^{2k} \left(1+O\left(\frac{1}{N}\right)\right) \\
			&= \frac{ (k+1)e^{k\log\left(1+\frac{1}{k}\right)} \sqrt{1+\frac{1}{k}} }{e\pi N^2 R^2 } \left(1+O\left(\frac{1}{N}\right)\right) = O\left(\frac{1}{N}\right) .
		\end{align*}
		Thus we conclude that for some constant $ \tilde{M}>0$, for all such values of $J$
		\begin{equation} \label{edge kernels similarity}
			\abs{\tilde{K}^{J_0}_{\mathcal{Y}}(z,w)-\tilde{K}^{J}_{\mathcal{Y}}(z,w)} \leq \tilde{M}\frac{\log(N)}{N} .
		\end{equation}
		
		Now we will estimate
		\begin{equation*}
			\tilde{K}^{J_0} = \sum_{n=N-N_c+1}^{N} \frac{N^{n-1} e^{-NR^2}R^{2n}}{\pi \Gamma(n+1,NR^2)} e^{-\frac{1}{2}R(z+\overline{z}+w+\overline{w})-\frac{\abs{z}^2+\abs{w}^2}{2N}} \left(1+\frac{z+\overline{w}}{NR}+\frac{z\overline{w}}{N^2R^2}\right)^n  .
		\end{equation*}
		We denote $\delta_N = \sqrt{N}\log(N) $ and then denote by $E_1, E_2, E_3 $ the sums from $n=N-N_c+1 $ to $ \lfloor NR^2 -\delta_N \rfloor$, from $n= \lfloor NR^2 -\delta_N \rfloor +1$ to $ \lfloor NR^2+\delta_N \rfloor$ and from $n= \lfloor NR^2+\delta_N\rfloor+1$ to $N$ respectively. Then we have
		\begin{equation} \label{edge kernel decomposition}
			\tilde{K}^{J_0}(z,w) = E_{1} + E_{2} + E_{3} .
		\end{equation}
		First we will estimate $ E_{1} $.
		\begin{align*}
			E_{1} &= \sum_{n=N-N_c+1}^{\lfloor NR^2 -\delta_N \rfloor} \frac{N^{n-1} e^{-NR^2}R^{2n}}{\pi \Gamma(n+1,NR^2)} e^{-\frac{1}{2}R(z+\overline{z}+w+\overline{w})-\frac{\abs{z}^2+\abs{w}^2}{2N}} \left(1+\frac{z+\overline{w}}{NR}+\frac{z\overline{w}}{N^2R^2}\right)^n \\
			&=  \sum_{n=N-N_c+1}^{\lfloor NR^2 -\delta_N \rfloor} \frac{N^{n-1} e^{-NR^2}R^{2n}}{\pi \Gamma(n+1,NR^2)} e^{-R\cdot \Re(z+w)} \left(1+\frac{z+\overline{w}}{NR}\right)^n \left(1+O\left(\frac{1}{N}\right)\right) .
		\end{align*}
		For $ N -N_c +1 \leq n \leq NR^2 - \delta_N $ we denote $\tau_n = \frac{n}{N} $ and then we have, by Lemma~\ref{Incomplete Gamma estimates},
		\begin{align*}
			\Gamma(n+1,NR^2) &= \Gamma(n+1) \frac{e^{-NR^2+(n+1)\left(1+\log \frac{NR^2}{n+1}\right)}} {\sqrt{2\pi(n+1)}\left(\frac{NR^2}{n+1}-1\right)}  \left(1+O\left(\frac{1}{n\left(\frac{NR^2}{n}-1\right)^2}\right)\right) \\
			&= \frac{\sqrt{2\pi (n+1)} e^{(n+1)(\log(n+1)-1)}}{n+1} \cdot \\
			&\cdot \frac{e^{-NR^2+(n+1)\left(1+\log \frac{NR^2}{n+1}\right)}} {\sqrt{2\pi(n+1)}\left(\frac{NR^2}{n+1}-1\right)}  \left(1+O\left(\frac{1}{n\left(\frac{NR^2}{n}-1\right)^2}\right)\right) \\
			&= \frac{e^{-NR^2} (NR^2)^{n+1}} {n\left(\frac{NR^2}{n}-1\right)}  \left(1+O\left(\frac{1}{N\left(\frac{NR^2}{n}-1\right)^2}\right)\right)
		\end{align*}
		and
		\begin{equation*}
			\frac{N^{n-1}e^{-NR^2}R^{2N}}{\Gamma(n+1,NR^2)} = \frac{R^2-\tau_n}{NR^2} \left(1+O\left(\frac{1}{N\left(\frac{NR^2}{n}-1\right)^2}\right)\right) .
		\end{equation*}
		Thus we conclude 
		\begin{align*}
			E_{1} &=  \sum_{n=N-N_c+1}^{\lfloor NR^2 -\delta_N \rfloor} \frac{N^{n-1} e^{-NR^2}R^{2n}}{\pi \Gamma(n+1,NR^2)} e^{-R\cdot \Re(z+w)} \left(1+\frac{z+\overline{w}}{NR}\right)^n \left(1+O\left(\frac{1}{N}\right)\right) \\
			&= \sum_{n=N-N_c+1}^{\lfloor NR^2 -\delta_N \rfloor} \frac{R^2-\tau_n}{\pi NR^2} e^{-R\cdot \Re(z+w)+\tau_n \frac{z+\overline{w}}{R} }  \left(1+O\left(\frac{1}{N\left(\frac{NR^2}{n}-1\right)^2}\right)\right) \\
			&= \frac{e^{-R\cdot \Re(z+w)}}{\pi N R^2} \sum_{n=N-N_c+1}^{\lfloor NR^2 -\delta_N \rfloor} \left(R^2-\tau_n\right) e^{\tau_n \frac{z+\overline{w}}{R} }  \left(1+O\left(\frac{1}{N\left(\frac{NR^2}{n}-1\right)^2}\right)\right) .
		\end{align*}
		We notice this is a Riemann sum and thus 
		\begin{align*}
			\frac{1}{N} &\sum_{n=N-N_c+1}^{\lfloor NR^2 -\delta_N \rfloor} \left(R^2-\tau_n\right) e^{\tau_n \frac{z+\overline{w}}{R} }  \left(1+O\left(\frac{1}{N\left(\frac{NR^2}{n}-1\right)^2}\right)\right) \\ 
			&=  \int_{1-c}^{R^2} \left(R^2-t\right) e^{t\frac{z+\overline{w}}{R}} \mathrm{d}t +O\left(\frac{\delta_N}{N} + \int_{1-c}^{R^2 - \frac{\delta_N^2}{N^2}} \frac{\mathrm{d}t}{N(t-R^2)} \right) .
		\end{align*}
		Substituting $ s=\frac{R^2-t}{R}(z+\overline{w}) $ we get
		\begin{align} \label{E1 estimate}
			E_{1} &= \frac{e^{-R\cdot \Re(z+w)}}{\pi \left(z+\overline{w}\right)^2} \int_{0}^{\frac{R^2-1+c}{R}\left(z+\overline{w}\right)} se^{-s} \mathrm{d}s + O\left( \frac{\log^2 N}{N}\right) \nonumber \\
			&= \frac{e^{-R\cdot \Re(z+w)}}{\pi \left(z+\overline{w}\right)^2} \left[1 - \left(\frac{R^2-1+c}{R}(z+\overline{w})+1\right) e^{-\frac{R^2-1+c}{R}\left(z+\overline{w}\right)}\right] + O\left( \frac{\log^2 N}{N}\right) .
		\end{align}

		Now, to bound $E_{2} $ and $E_{3} $  we will notice that, by Lemma~\ref{series rep of incomplete} ,
		\begin{equation} \label{summand representation E2}
			\frac{N^{n-1} e^{-NR^2}R^{2n}}{\pi \Gamma(n+1,NR^2)} = \frac{1}{\pi N} \frac{\frac{\left(NR^2\right)^n}{n!}}{\sum_{k=0}^{n} \frac{\left(NR^2\right)^k}{k!}} .
		\end{equation}
		We denote 
		\begin{equation*}
			k_n = \begin{cases}
				\frac{NR^2}{NR^2-n} & n<NR^2 -\sqrt{NR^2} , \\
				\sqrt{NR^2} & n\geq NR^2-\sqrt{NR^2} ,
			\end{cases}
		\end{equation*}
		and then
		\begin{align*}
			\frac{\frac{\left(NR^2\right)^n}{n!}}{\sum_{k=0}^{n} \frac{\left(NR^2\right)^k}{k!}} &=  \left(\sum_{k=0}^{n} \prod_{l=0}^{k-1} \frac{n-l}{NR^2} \right)^{-1} \\
			&=  \left(\sum_{k=0}^{n} \exp \left[\sum_{l=0}^{k-1} \log \frac{n-l}{NR^2}\right]  \right)^{-1} \\
			&\leq \left(\sum_{k=0}^{\lfloor k_n \rfloor} \exp \left[\sum_{l=0}^{k-1} \log \left(1-\frac{NR^2-n+l}{NR^2}\right)\right]  \right)^{-1}  .
		\end{align*}
		Using the fact that for $ x \in [0,0.5]$, $\log(1-x)\geq -2x$ we get, for $N $ big enough such that ${0.5NR^2 > \sqrt{NR^2} +\delta_N} $ ,
		\begin{align} \label{summand estimate E2}
			\frac{\frac{\left(NR^2\right)^n}{n!}}{\sum_{k=0}^{n} \frac{\left(NR^2\right)^k}{k!}} &\leq  \left(\sum_{k=0}^{\lfloor k_n \rfloor} \exp \left[\sum_{l=0}^{k-1} -2 \max\left(\frac{NR^2-n+l}{NR^2},0\right)\right]  \right)^{-1} \nonumber \\
			&\leq \left(\lfloor k_n \rfloor \exp \left[-2 \sum_{l=0}^{\lfloor k_n \rfloor}\max\left(\frac{NR^2-n+l}{NR^2},0\right) \right] \right)^{-1} \nonumber \\
			&\leq \left(\lfloor k_n \rfloor e^{-4} \right)^{-1} \leq \frac{e^4}{k_n-1} \leq \frac{C}{\sqrt{N}}
		\end{align}
		for some constant $C>0 $. Combining \eqref{summand representation E2} and \eqref{summand estimate E2} we conclude that for a different constant $\tilde{C}>0 $,
		\begin{align} \label{E2 estimate}
			E_2 &=  \sum_{n=\lfloor NR^2-\delta_N\rfloor+1}^{\lfloor NR^2+\sqrt{NR^2}\rfloor} \frac{N^{n-1} e^{-NR^2}R^{2n}}{\pi \Gamma(n+1,NR^2)} e^{-\frac{1}{2}R(z+\overline{z}+w+\overline{w})-\frac{\abs{z}^2+\abs{w}^2}{2N}} \left(1+\frac{z+\overline{w}}{NR}+\frac{z\overline{w}}{N^2R^2}\right)^n \nonumber \\
			&\leq \sum_{n=\lfloor NR^2-\delta_N\rfloor+1}^{\lfloor NR^2+\sqrt{NR^2}\rfloor} \frac{\tilde{C}}{N\sqrt{N}} \leq \frac{2\tilde{C}\delta_N}{N\sqrt{N}} = O\left(\frac{\log N}{N}\right).
		\end{align}

		
		For $ n > \lfloor NR^2+\delta_N \rfloor $ we have, by Lemma~\ref{Incomplete Gamma estimates}, $\Gamma(n+1,NR^2) = n! \cdot (1+o(1)) $  and hence, while denoting $n_+ = \lfloor NR^2+\delta_N\rfloor+1 $, we have for some other constants $ C, \tilde{C}>0 $ depending on $K_{\operatorname{supp}} $,
		\begin{align} \label{E3 estimate}
			E_{3} &= \sum_{n=\lfloor NR^2+\delta_N\rfloor+1}^{N} \frac{N^{n-1} e^{-NR^2}R^{2n}}{\pi \Gamma(n+1,NR^2)} e^{-\frac{1}{2}R(z+\overline{z}+w+\overline{w})-\frac{\abs{z}^2+\abs{w}^2}{2N}} \left(1+\frac{z+\overline{w}}{NR}+\frac{z\overline{w}}{N^2R^2}\right)^n \nonumber \\
			&\leq C \max_{n=n_{+},\dots,N} \frac{(NR^2)^n e^{-NR^2}}{n!} = C\frac{(NR^2)^{n_+} e^{-NR^2}}{n_{+}!} \leq \tilde{C} \frac{\left(NR^2\right)^{n_+} e^{n_{+}-NR^2}}{n_{+}^{n_{+}} \sqrt{n_{+}}} \nonumber \\
			&\leq \tilde{C} \frac{e^{\left(NR^2+\delta_N\right)(1-\log(\frac{NR^2+\delta_N}{NR^2}))-NR^2}}{\sqrt{n_{+}}} \leq \frac{\tilde{C}e^{-\frac{\delta_N^2}{NR^2} +O\left(\frac{\delta_N}{N}\right)}}{R\sqrt{N}} .
		\end{align}
		
		Combining \eqref{edge kernel decomposition}, \eqref{E1 estimate}, \eqref{E2 estimate} and \eqref{E3 estimate} we conclude 
		\begin{equation*}
			\tilde{K}^{J_0}_{\mathcal{Y}}(z,w) = \frac{e^{-R\cdot \Re(z+w)}}{\pi \left(z+\overline{w}\right)^2} \left[1 - \left(\frac{R^2-1+c}{R}(z+\overline{w})+1\right) e^{-\frac{R^2-1+c}{R}\left(z+\overline{w}\right)}\right] + O\left( \frac{\log^2 (N)}{N}\right) .
		\end{equation*}
		
		Thus by \eqref{edge kernels similarity}, we conclude that with probability $ 1 - O(N^{-10})$, 
		\begin{equation*}
			\tilde{K}^J_{\mathcal{Y}}(z,w) = \frac{e^{-R\cdot \Re(z+w)}}{\pi \left(z+\overline{w}\right)^2} \left[1 - \left(\frac{R^2-1+c}{R}(z+\overline{w})+1\right) e^{-\frac{R^2-1+c}{R}\left(z+\overline{w}\right)}\right] + O\left( \frac{\log^2 (N)}{N}\right) .
		\end{equation*}
		
		Thus the distribution of $x_i = y_i \cdot \frac{R^2-1+c}{R}  $ is a mixture of determinantal point processes, that with probability at least $1-O(N^{-10}) $ have kernels
		
		\begin{align*}
			\left(\frac{R}{R^2-1+c}\right)^2 &\tilde{K}^J_{\mathcal{Y}}\left(\frac{R}{R^2-1+c}z,\frac{R}{R^2-1+c}w\right) \\
			&= \frac{e^{-R\cdot \Re(z+w)} \left[1-(z+\overline{w}+1)e^{-z-\overline{w}}\right]}{\pi \left(z+\overline{w}\right)^2}  + O\left( \frac{\log^2 (N)}{N}\right) .
		\end{align*}
		
		Those kernels are equivalent to $	K_{\mathcal{X}}(z,w) + O\left( \frac{\log^2 (N)}{N}\right) $ when we recall that
		\begin{equation*}
			K_{\mathcal{X}}(z,w) = \frac{1-(z+\overline{w}+1)e^{-z-\overline{w}}}{\pi \left(z+\overline{w}\right)^2} .
		\end{equation*}
		
		As $ f$ is a bounded function with compact support we conclude, by Lemma~\ref{determinantal process expectation bound},
		\begin{equation*}
			E_{f,N} = E_{\mathcal{X},f} + O\left(\frac{\log(N)^2}{N}\right) .
		\end{equation*}
		
	\end{proof}

	\begin{appendices}	
		
	\section{Incomplete Gamma Function} \label{incomplete gamma appendix}
	
	We use the following definitions of the incomplete gamma functions and the normalised incomplete gamma function.
	\begin{Def}
		The incomplete gamma functions are defined by 
		\begin{align*}
			\Gamma(a,s) = \int_{s}^{\infty} x^{a-1}e^{-x}\mathrm{d}x, && 			\gamma(a,s) = \int_{0}^{s} x^{a-1}e^{-x}\mathrm{d}x  .
		\end{align*} 
	
	\end{Def}
	
	\begin{Def}
		The normalised incomplete Gamma function is 
		$Q(a,z)=\frac{\Gamma(a,z)}{\Gamma(a)}$.
	\end{Def}
	
	We will use the following estimates from \cite[Lemma~2.2 and Lemma~2.4]{LargeGapAsymptotics}, and see also \cite[Formulas~8.11.2,8.11.3]{DLMF} and \cite[Section~11.2.4]{SpecialFunctionsBook}.
	\begin{Lemma}
		Let $a >0 $ be fixed. As $z\to \infty$,
		\begin{equation*}
			\Gamma(a,z)= O(e^{-\frac{z}{2}}) .
		\end{equation*}
	\end{Lemma}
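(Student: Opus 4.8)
The plan is to extract from the integrand one full factor of exponential decay at rate $e^{-z/2}$ and to recognise what remains as a convergent Gamma integral. Concretely, starting from the definition $\Gamma(a,z) = \int_z^\infty x^{a-1} e^{-x}\, \mathrm{d}x$, I would split $e^{-x} = e^{-x/2}\cdot e^{-x/2}$ and use that $e^{-x/2} \le e^{-z/2}$ for every $x \ge z$. This yields
\[
\Gamma(a,z) \le e^{-z/2} \int_z^\infty x^{a-1} e^{-x/2}\, \mathrm{d}x \le e^{-z/2}\int_0^\infty x^{a-1} e^{-x/2}\, \mathrm{d}x,
\]
where the last inequality merely enlarges the domain of integration of a nonnegative integrand.

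It then remains to evaluate $\int_0^\infty x^{a-1} e^{-x/2}\, \mathrm{d}x$, which is finite precisely because $a>0$ guarantees integrability at the origin. Substituting $u = x/2$ gives $\int_0^\infty (2u)^{a-1} e^{-u}\cdot 2\, \mathrm{d}u = 2^a \Gamma(a)$, so altogether $\Gamma(a,z) \le 2^a \Gamma(a)\, e^{-z/2}$ for all $z>0$. Since $a$ is fixed, the factor $2^a\Gamma(a)$ is a genuine constant, and this is exactly the claimed bound $\Gamma(a,z) = O(e^{-z/2})$.

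There is essentially no obstacle here; the only point requiring a word of care is the hypothesis $a>0$, which is what makes the auxiliary integral $\int_0^\infty x^{a-1}e^{-x/2}\,\mathrm{d}x$ converge. The same argument with $e^{-\theta x}$ in place of $e^{-x/2}$, for any $\theta \in (0,1)$, would give the sharper-looking $\Gamma(a,z) = O(e^{-(1-\theta)z})$; the exponent $1/2$ is simply the choice $\theta = 1/2$, which is all that is needed in the applications later in the paper.
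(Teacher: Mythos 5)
Your argument is correct and complete: factoring $e^{-x}=e^{-x/2}e^{-x/2}$, bounding one factor by $e^{-z/2}$ on the domain $x\ge z$, and evaluating the remaining integral as $2^{a}\Gamma(a)$ gives the clean uniform bound $\Gamma(a,z)\le 2^{a}\Gamma(a)e^{-z/2}$ for all $z>0$, which is stronger than the stated asymptotic claim. The paper does not prove this lemma at all; it imports it (together with the sharper uniform asymptotics of Lemma~\ref{Q representation}) from Charlier \cite{LargeGapAsymptotics} and the DLMF. So your proof is not so much a different route as the only route on display: a two-line elementary estimate that makes the lemma self-contained, whereas the paper's citation bundles it with much heavier machinery (the $\operatorname{erfc}$ representation of $Q(a,z)$) that is genuinely needed only for the other lemmas in the appendix. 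Your closing remark that the exponent $1/2$ is an arbitrary choice of $\theta\in(0,1)$ is also accurate and correctly identifies why the crude rate suffices for the applications.
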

	
	\begin{Lemma} \label{Q representation}
		The following holds 
		\begin{equation*}
			Q(a,z) =\frac{1}{2}\operatorname{erfc}(-\eta\sqrt{\nicefrac{a}{2}}) -R_a(\eta)
		\end{equation*}
		where $\operatorname{erfc}$ is the error function, $\lambda = \frac{z}{a}$, $\eta = (\lambda-1)\sqrt{\frac{2(\lambda-1-\log \lambda)}{(\lambda-1)^2}} $ and $R_a$ satisfies, as $a\to \infty$ and uniformly in $ z\in [0,\infty)$, 
		\begin{equation*}
			R_a(\eta) \sim \frac{e^{-\frac{1}{2} a\eta^2}}{\sqrt{2\pi a}} \sum_{j=0}^{\infty} \frac{c_j(\eta)}{a^j}
		\end{equation*}
		where $c_j$ are bounded functions on $\mathbb{R}$ (i.e bounded for $z\in [0,\infty)$).
	\end{Lemma}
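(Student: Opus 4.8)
This is Temme's uniform asymptotic expansion of the incomplete gamma function for a large parameter; it is the content of \cite[Formulas~8.11.2,8.11.3]{DLMF} and \cite[Section~11.2.4]{SpecialFunctionsBook} (up to notation and the sign convention fixed for $\eta$), and is recorded in the precise form we use in \cite[Lemma~2.2 and Lemma~2.4]{LargeGapAsymptotics}, so in the write-up I would simply cite those references; I sketch the mechanism here. The plan is to pass to a scaled saddle-point integral: substituting $t=a\tau$ in $Q(a,z)=\frac{1}{\Gamma(a)}\int_{z}^{\infty}t^{a-1}e^{-t}\,dt$ and using $t^{a-1}e^{-t}=\tau^{-1}e^{-a(\tau-\log\tau)}$ gives
\[
	Q(a,z)=\frac{a^{a}e^{-a}}{\Gamma(a)}\int_{\lambda}^{\infty}e^{-a(\tau-1-\log\tau)}\,\frac{d\tau}{\tau},\qquad \lambda=\frac{z}{a}.
\]
The function $\tau\mapsto\tau-1-\log\tau$ is nonnegative, strictly convex, and vanishes only at its minimum $\tau=1$, so $\tfrac12\eta^{2}=\tau-1-\log\tau$ with $\sgn{\eta}=\sgn{\tau-1}$ defines a real-analytic bijection $\tau\leftrightarrow\eta$ of $(0,\infty)$ onto $\mathbb{R}$ sending $\tau=\lambda$ to the $\eta$ of the statement, and $\frac{d\tau}{\tau}=\frac{\eta}{\tau-1}\,d\eta=:\psi(\eta)\,d\eta$, where $\psi$ extends analytically across $\eta=0$ with $\psi(0)=1$. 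Hence $Q(a,z)=\frac{a^{a}e^{-a}}{\Gamma(a)}\int_{\eta(\lambda)}^{\infty}e^{-\frac12 a\eta^{2}}\psi(\eta)\,d\eta$.

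I would then extract the error-function term and expand the rest. Writing $\psi=1+(\psi-1)$, the constant $1$ integrates to a complementary error function, and combining it with the Stirling expansion $a^{a}e^{-a}/\Gamma(a)=\sqrt{a/(2\pi)}\,(1+O(1/a))$ produces the leading term $\tfrac12\operatorname{erfc}(-\eta\sqrt{a/2})$ of the statement (the sign inside being fixed by the requirement that this term vanish as $z/a\to\infty$). Since $\psi(\eta)-1=O(\eta)$ near $0$, the identity $\eta e^{-\frac12 a\eta^{2}}=-\tfrac1a\frac{d}{d\eta}e^{-\frac12 a\eta^{2}}$ lets the contribution of $\psi-1$ be developed by a standard integration-by-parts (Watson-type) recursion, each step gaining a power of $a^{-1}$ and a boundary value at $\eta(\lambda)$; this, together with the subleading Stirling corrections, yields $R_{a}(\eta)\sim\frac{e^{-\frac12 a\eta^{2}}}{\sqrt{2\pi a}}\sum_{j\ge0}c_{j}(\eta)a^{-j}$ with $c_{0}(\eta)=\frac{1}{\lambda-1}-\frac{1}{\eta}$ (where $\lambda=\lambda(\eta)$). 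The slightly delicate point is that one must verify the $c_{j}$ are genuinely analytic and bounded on all of $\mathbb{R}$: at $\eta=0$ the apparent poles cancel because the change of variables is analytic there — concretely $\tau-1=\eta+\tfrac13\eta^{2}+\cdots$ — and as $\eta\to\pm\infty$ one has $\tau(\eta)\to+\infty$ and $\tau(\eta)\to0$ respectively, so that $\frac1{\tau-1}$ and its $\eta$-derivatives stay bounded; these cancellations are the content of the careful bookkeeping in the references.

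The main obstacle is the uniformity in $z\in[0,\infty)$, i.e. in $\eta\in\mathbb{R}$, and in particular the turning point $\lambda=1$ ($\eta=0$), where the saddle of the integrand coalesces with the endpoint of integration: one must show that the remainder after $n$ steps is bounded by $C_{n}\,a^{-n}\,e^{-\frac12 a\eta^{2}}/\sqrt a$ with $C_{n}$ independent of $\eta$. This follows from the boundedness of the $c_{j}$ together with the elementary bound
\[
	\int_{\eta_{0}}^{\infty}e^{-\frac12 a\eta^{2}}|\eta|^{m}\,d\eta\ \le\ C_{m}\,a^{-(m+1)/2}+|\eta_{0}|^{m}\int_{\eta_{0}}^{\infty}e^{-\frac12 a\eta^{2}}\,d\eta,
\]
uniform in $\eta_{0}\in\mathbb{R}$ — near $\eta_{0}=0$ all powers of $\eta$ are harmless, while for large $|\eta_{0}|$ the Gaussian decay dominates. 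Assembling these pieces gives the expansion; the concrete estimates for $Q$ used later in the paper then follow by elementary manipulations, e.g. $Q(a,z)=\frac{e^{-z+a(1+\log(z/a))}}{\sqrt{2\pi a}\,(z/a-1)}(1+O(1/a))$ when $z/a$ is bounded away from $1$ with $z/a>1$, because in that regime both the $\operatorname{erfc}$ term and $R_{a}$ are of order $e^{-\frac12 a\eta^{2}}/\sqrt a$ and the $1/\eta$ coming from the large-argument expansion of $\operatorname{erfc}$ cancels the $-1/\eta$ in $c_{0}(\eta)$, leaving $\tfrac1{\lambda-1}$.
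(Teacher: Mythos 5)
The paper gives no proof of this lemma at all: it is imported directly from Temme's uniform asymptotic expansion as recorded in Charlier's Lemmas 2.2 and 2.4 and in DLMF 8.12, so your citation-plus-sketch matches the paper's treatment, and the sketch itself (the substitution $t=a\tau$, the analytic change of variables $\tfrac12\eta^2=\tau-1-\log\tau$ with $\sgn{\eta}=\sgn{\tau-1}$, extraction of the error-function term via Stirling, and the integration-by-parts recursion giving $c_0(\eta)=\frac{1}{\lambda-1}-\frac{1}{\eta}$ with the cancellation of apparent poles at $\eta=0$) is the correct standard derivation, including the final reconciliation with the pointwise estimate used later in the paper. One small inconsistency worth flagging: the computation you outline produces the leading term $\tfrac12\operatorname{erfc}(+\eta\sqrt{a/2})$ for the upper-tail ratio $Q(a,z)=\Gamma(a,z)/\Gamma(a)$ — which is the version that vanishes as $\lambda\to\infty$, i.e.\ $\eta\to+\infty$ — whereas the displayed formula has $\operatorname{erfc}(-\eta\sqrt{a/2})$, which tends to $2$ in that limit; your own parenthetical sign check therefore selects the opposite sign from the one you wrote down, and the $-\eta$ version is in fact the expansion of $\gamma(a,z)/\Gamma(a)$, so this looks like a sign convention slip inherited from the statement as printed rather than a defect in your argument.
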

	
	In addition we have the following estimate for the error function (see \cite[7.12.1]{DLMF}) .
	\begin{Lemma} \label{erfc asymptotics}
		The error function satisfies, as $y\to \infty$, 
		\begin{equation*}
			\operatorname{erfc}(y) = \frac{e^{-y^2}}{\sqrt{\pi}} \left( \frac{1}{y} - \frac{1}{2y^3} +O(y^-5)\right)
		\end{equation*}
		and $\operatorname{erfc} (-y) = 2-\operatorname{erfc}(y)$.
	\end{Lemma}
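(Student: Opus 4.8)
The plan is to prove the expansion by repeated integration by parts on the tail integral representation
\begin{equation*}
	\operatorname{erfc}(y) = \frac{2}{\sqrt{\pi}}\int_y^\infty e^{-t^2}\,\mathrm{d}t ,
\end{equation*}
and to derive the reflection formula $\operatorname{erfc}(-y) = 2-\operatorname{erfc}(y)$ from the evenness of the Gaussian together with the normalisation $\int_{-\infty}^\infty e^{-t^2}\,\mathrm{d}t = \sqrt{\pi}$.

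First I would write $e^{-t^2} = \left(-\tfrac{1}{2t}\right)\tfrac{\mathrm{d}}{\mathrm{d}t}e^{-t^2}$ and integrate by parts on $[y,\infty)$; the boundary term at infinity vanishes because $e^{-t^2}/t \to 0$, giving
\begin{equation*}
	\int_y^\infty e^{-t^2}\,\mathrm{d}t = \frac{e^{-y^2}}{2y} - \frac{1}{2}\int_y^\infty \frac{e^{-t^2}}{t^2}\,\mathrm{d}t .
\end{equation*}
Applying the same device once more, via $\tfrac{e^{-t^2}}{t^2} = \left(-\tfrac{1}{2t^3}\right)\tfrac{\mathrm{d}}{\mathrm{d}t}e^{-t^2}$, yields
\begin{equation*}
	\int_y^\infty e^{-t^2}\,\mathrm{d}t = \frac{e^{-y^2}}{2y} - \frac{e^{-y^2}}{4y^3} + \frac{3}{4}\int_y^\infty \frac{e^{-t^2}}{t^4}\,\mathrm{d}t .
\end{equation*}
Then I would bound the remaining integral crudely: for $t \ge y$ one has $t^{-4} \le y^{-5} t$, so
\begin{equation*}
	\int_y^\infty \frac{e^{-t^2}}{t^4}\,\mathrm{d}t \le \frac{1}{y^5}\int_y^\infty t e^{-t^2}\,\mathrm{d}t = \frac{e^{-y^2}}{2y^5} ,
\end{equation*}
whence the remainder is $O\!\left(y^{-5}e^{-y^2}\right)$ as $y\to\infty$. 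Multiplying through by $\tfrac{2}{\sqrt{\pi}}$ gives exactly $\operatorname{erfc}(y) = \tfrac{e^{-y^2}}{\sqrt{\pi}}\left(\tfrac{1}{y} - \tfrac{1}{2y^3} + O(y^{-5})\right)$. For the reflection identity I would use evenness of $e^{-t^2}$ and the Gaussian integral to write $\operatorname{erfc}(-y) = \tfrac{2}{\sqrt{\pi}}\int_{-y}^\infty e^{-t^2}\,\mathrm{d}t = \tfrac{2}{\sqrt{\pi}}\left(\int_{-\infty}^\infty - \int_{-\infty}^{-y}\right) e^{-t^2}\,\mathrm{d}t = 2 - \tfrac{2}{\sqrt{\pi}}\int_y^\infty e^{-t^2}\,\mathrm{d}t = 2 - \operatorname{erfc}(y)$.

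There is essentially no obstacle here; the only point warranting a word of care is that the $O$-term is asserted uniformly as $y\to\infty$, which is automatic from the explicit bound above (one may take the implied constant to be $1$ for, say, $y\ge 1$). If one wanted the full asymptotic series rather than the first two terms, one would iterate the integration-by-parts step $n$ times and control the resulting remainder by the same comparison argument, but for the statement of Lemma~\ref{erfc asymptotics} two steps suffice.
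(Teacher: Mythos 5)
Your proof is correct. Note, however, that the paper does not prove this lemma at all: it is quoted directly from the DLMF (formula 7.12.1), so there is no internal argument to compare against. What you supply is the standard self-contained derivation of that cited fact: two integrations by parts on $\int_y^\infty e^{-t^2}\,\mathrm{d}t$, with the remainder $\tfrac{3}{4}\int_y^\infty t^{-4}e^{-t^2}\,\mathrm{d}t$ controlled by the comparison $t^{-4}\le y^{-5}t$ for $t\ge y$, which gives the explicit bound $\tfrac{3}{8}y^{-5}e^{-y^2}$ and hence the $O(y^{-5})$ term with a concrete constant. All the coefficients check out ($e^{-y^2}/(2y)$, $-e^{-y^2}/(4y^3)$, and the factor $2/\sqrt{\pi}$ recover $\tfrac{1}{y}-\tfrac{1}{2y^3}$), and the reflection identity follows correctly from evenness of the Gaussian and $\int_{-\infty}^{\infty}e^{-t^2}\,\mathrm{d}t=\sqrt{\pi}$. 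Your approach has the minor virtue of making the lemma independent of the external reference, at the cost of a few lines; for the purposes of the paper the citation suffices, since only the displayed two-term expansion is ever used.
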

	By combining Lemma~\ref{Q representation} and Lemma~\ref{erfc asymptotics} we conclude
	
	\begin{Lemma} \label{Incomplete Gamma estimates}
		\begin{enumerate}
			\item Let $\delta>0 $ be fixed. As $a\to \infty , \lambda \to \lambda_0 \geq 1$ ,such that $ (\lambda-1) \sqrt{a} \to \infty $ ,
			\begin{equation*}
				\frac{\Gamma(a,\lambda a)}{\Gamma(a)} = \frac{e^{-a (\lambda -1 -\log(\lambda))}}{\sqrt{2\pi}} \frac{1}{\lambda -1 } \frac{1}{\sqrt{a}}  \left(1 +O\left(\frac{1}{a} + \frac{1}{a(\lambda-1)^2}\right)\right) .
			\end{equation*}
			   
			\item As $ a\to \infty,  \lambda \to \lambda_0 > 0 $, such that $ (1-\lambda)\sqrt{a} \to \infty $,
			\begin{equation*}
				\frac{\Gamma(a,\lambda a)}{\Gamma(a)} = 1 - \frac{e^{-a (\lambda -1 -\log(\lambda))}}{\sqrt{2\pi}} \frac{1}{\lambda -1 } \frac{1}{\sqrt{a}} \left(1 + O\left(\frac{1}{a} + \frac{1}{a(\lambda-1)^2}\right)\right) .
			\end{equation*}
		\end{enumerate}
	\end{Lemma}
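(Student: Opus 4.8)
The plan is to substitute the uniform representation of Lemma~\ref{Q representation} for $Q(a,\lambda a)$ and then expand each of its two pieces --- the error-function term $\tfrac12\operatorname{erfc}(\mp\eta\sqrt{a/2})$ and the remainder $R_a(\eta)$ --- using Lemma~\ref{erfc asymptotics} and the asymptotic series for $R_a$ respectively. The starting point is the identity $\tfrac12 a\eta^2 = a(\lambda-1-\log\lambda)$, immediate from $\eta^2 = 2(\lambda-1-\log\lambda)$; this is what produces the exponential factor $e^{-a(\lambda-1-\log\lambda)}$ in both claimed formulas. The hypothesis $|\lambda-1|\sqrt a\to\infty$ is exactly what makes $|\eta|\sqrt{a/2}\to\infty$ (near a fixed $\lambda_0\in(0,\infty)$ one has $\eta^2\asymp(\lambda-1)^2$), so that the large-argument expansion of $\operatorname{erfc}$ from Lemma~\ref{erfc asymptotics} applies.

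First I would treat Case~1, $\lambda\to\lambda_0\ge 1$, where $\eta>0$ and $\tfrac12\operatorname{erfc}(\eta\sqrt{a/2})$ is the small quantity. By Lemma~\ref{erfc asymptotics} this term equals $\tfrac{e^{-a(\lambda-1-\log\lambda)}}{\sqrt{2\pi a}}\bigl(\tfrac1\eta + O(a^{-1}\eta^{-3})\bigr)$, while $R_a(\eta) = \tfrac{e^{-a(\lambda-1-\log\lambda)}}{\sqrt{2\pi a}}\bigl(c_0(\eta)+O(a^{-1})\bigr)$ with $c_0$ bounded. Adding the two and invoking the known value $c_0(\eta) = \tfrac1{\lambda-1}-\tfrac1\eta$ (see \cite[\S8.12]{DLMF}; equivalently this is forced by matching the leading-order endpoint estimate of $\Gamma(a,\lambda a)$), the $\tfrac1\eta$ contributions cancel and the coefficient $\tfrac1{\lambda-1}$ survives. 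Collecting the leftover errors --- the $O(a^{-1})$ from the $R_a$-series and the $O(a^{-1}\eta^{-2}) = O(\tfrac1{a(\lambda-1)^2})$ from the $\operatorname{erfc}$-expansion, both measured relative to the main term $\tfrac1{\lambda-1}$ --- yields the first formula. Case~2, $\lambda\to\lambda_0>0$ with $(1-\lambda)\sqrt a\to\infty$, is the mirror image: now $\eta<0$, one writes $Q=1-P$ with $P=\gamma(a,\lambda a)/\Gamma(a)$ in the role of the small quantity, and the same computation (now with $\lambda-1<0$) gives the second formula.

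I expect the only genuinely delicate point to be uniformity as $\lambda\to1$: there $\tfrac1\eta$ and $c_0(\eta)$ individually blow up, and only their combination $\tfrac1\eta+c_0(\eta)=\tfrac1{\lambda-1}$ stays controlled, so after the cancellation one must check that the leftover errors really are $O(a^{-1}+(a(\lambda-1)^2)^{-1})$ uniformly on the stated range --- using $\eta\asymp\lambda-1$ there, so that $a^{-1}\eta^{-3}$ relative to $\tfrac1\eta$ is $O((a(\lambda-1)^2)^{-1})$, and using boundedness of $c_1(\eta)$ near $\eta=0$. A cleaner alternative, which sidesteps quoting the precise value of $c_0(\eta)$, is to establish both formulas directly by Laplace's method at the endpoint: after the substitution $x=\lambda a s$ one has $\Gamma(a,\lambda a)=(\lambda a)^a\int_1^\infty e^{a(\log s-\lambda s)}s^{-1}\,ds$ and $\gamma(a,\lambda a)=(\lambda a)^a\int_0^1 e^{a(\log s-\lambda s)}s^{-1}\,ds$, each dominated by the endpoint $s=1$, where the phase has derivative $a(1-\lambda)$; dividing by Stirling's formula for $\Gamma(a)$ produces $\tfrac{e^{-a(\lambda-1-\log\lambda)}}{|\lambda-1|\sqrt{2\pi a}}$ at leading order, and the separation hypothesis $|\lambda-1|\sqrt a\to\infty$ is precisely what keeps $s=1$ away from the interior critical point $s=1/\lambda$ and makes that endpoint expansion, with the stated relative error, legitimate.
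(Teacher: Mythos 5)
Your proposal takes exactly the route the paper does: the paper derives Lemma~\ref{Incomplete Gamma estimates} simply ``by combining'' Lemma~\ref{Q representation} with Lemma~\ref{erfc asymptotics}, which is your first argument. In fact you supply the two details the paper leaves implicit --- the explicit value $c_0(\eta)=\frac{1}{\lambda-1}-\frac{1}{\eta}$ (the paper's Lemma~\ref{Q representation} only asserts boundedness of the $c_j$) and the cancellation of the $\frac{1}{\eta}$ terms needed to get the coefficient $\frac{1}{\lambda-1}$ with the stated uniform error --- so your write-up is, if anything, more complete than the paper's.
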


	In addition we will use the following facts for integer values of $ a$.
	\begin{Lemma} [See {\cite[Section 8.4.10]{DLMF}}] \label{series rep of incomplete}
		
		For integer $n$ and positive $z$
		\begin{equation*}
			Q(n,z) = e^{-z}\sum_{k=0}^{n-1} \frac{z^k}{k!} .
		\end{equation*}
	\end{Lemma}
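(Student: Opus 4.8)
The plan is to reduce the claim to the elementary identity $\Gamma(n,z) = (n-1)!\, e^{-z}\sum_{k=0}^{n-1} z^k/k!$, since by definition $Q(n,z) = \Gamma(n,z)/\Gamma(n)$ and $\Gamma(n) = (n-1)!$ for integer $n$.

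First I would set $I_n = \Gamma(n,z) = \int_z^{\infty} x^{n-1} e^{-x}\,\mathrm{d}x$ and establish the recurrence $I_n = z^{n-1} e^{-z} + (n-1) I_{n-1}$ by a single integration by parts, noting that the boundary term at $+\infty$ vanishes because $x^{n-1} e^{-x} \to 0$. The base case $I_1 = \int_z^{\infty} e^{-x}\,\mathrm{d}x = e^{-z}$ matches the claimed formula for $n=1$.

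Then I would run an induction on $n$: assuming $I_{n-1} = (n-2)!\, e^{-z}\sum_{k=0}^{n-2} z^k/k!$, substitute into the recurrence, rewrite the extra term as $z^{n-1} e^{-z} = (n-1)!\, e^{-z}\cdot z^{n-1}/(n-1)!$, and fold it into the partial sum to get $I_n = (n-1)!\, e^{-z}\sum_{k=0}^{n-1} z^k/k!$. Dividing by $\Gamma(n) = (n-1)!$ yields the stated formula.

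There is no genuine obstacle here; the only points needing minor care are the vanishing of the boundary term at infinity and the index shift when the extra summand is absorbed into the partial sum. Alternatively one could verify the identity directly by observing that both sides equal $1$ at $z=0$ and have the same derivative in $z$, since $\frac{\mathrm{d}}{\mathrm{d}z}\bigl(e^{-z}\sum_{k=0}^{n-1} z^k/k!\bigr) = -e^{-z} z^{n-1}/(n-1)! = \frac{\mathrm{d}}{\mathrm{d}z} Q(n,z)$ by differentiating under the integral sign; I would likely present the integration-by-parts argument as it is the most self-contained.
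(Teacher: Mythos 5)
Your proof is correct. Note that the paper does not actually prove this lemma --- it simply cites it from DLMF, Section 8.4.10, as a standard fact about the incomplete gamma function at integer parameter. Your integration-by-parts recurrence $\Gamma(n,z) = z^{n-1}e^{-z} + (n-1)\Gamma(n-1,z)$ with base case $\Gamma(1,z)=e^{-z}$, followed by induction and division by $\Gamma(n)=(n-1)!$, is the standard self-contained derivation and handles the two minor points you flag (vanishing boundary term at infinity, index shift) correctly. Your alternative argument --- matching values at $z=0$ and derivatives in $z$ --- is also valid and arguably even shorter. Either is a fine way to fill in what the paper leaves to the reference.
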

	
	As a consequence we get the following lemma.
	\begin{Lemma} \label{Incomplete gamma monotonicity}
		For integer values of $n$ and positive $z$
		
		\begin{equation*}
			Q(n,z) < Q(n+1,z) .
		\end{equation*}
		
	\end{Lemma}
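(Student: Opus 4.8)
The plan is to derive this directly from the series representation in Lemma~\ref{series rep of incomplete}. Applying that lemma to both $n$ and $n+1$, I would write
\begin{equation*}
	Q(n+1,z) = e^{-z}\sum_{k=0}^{n} \frac{z^k}{k!} = e^{-z}\sum_{k=0}^{n-1} \frac{z^k}{k!} + e^{-z}\frac{z^n}{n!} = Q(n,z) + e^{-z}\frac{z^n}{n!}.
\end{equation*}
Since $z>0$ and $n$ is a nonnegative integer, the term $e^{-z}\frac{z^n}{n!}$ is strictly positive, which yields $Q(n+1,z) > Q(n,z)$.

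There is no real obstacle here: the statement is an immediate consequence of the explicit finite-sum formula for $Q$ at integer arguments, and the only thing to check is the positivity of the extra summand, which is trivial for $z>0$. (If one preferred to avoid invoking the series representation, an alternative would be to integrate by parts in $\Gamma(n,z)=\int_z^\infty x^{n-1}e^{-x}\,dx$ to obtain the recursion $\Gamma(n+1,z) = n\,\Gamma(n,z) + z^n e^{-z}$, divide by $\Gamma(n+1)=n!$, and again read off the positive correction term; but the route through Lemma~\ref{series rep of incomplete} is the shortest.)
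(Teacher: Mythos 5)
Your argument is exactly the one the paper intends: the lemma is stated there as an immediate consequence of the series representation $Q(n,z)=e^{-z}\sum_{k=0}^{n-1}\frac{z^k}{k!}$, and comparing the sums for $n$ and $n+1$ gives the strictly positive extra term $e^{-z}\frac{z^n}{n!}$. The proposal is correct and matches the paper's approach.
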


	\section{The Partition Function} \label{partition function}
	
	The partition function $p(n) $ is the number of different way to write $n$ as a sum of positive integers, up to summation order. 
	The asymptotics of the partition function is discussed for example in \cite{PartitionFunctionBook}, we will use only a basic claim, proved here for completeness.
	
	\begin{Lemma} \label{partition function bound}
		For any constant $ k>1 $ , for $n $ big enough the partition function satisfy $ p(n) \leq k^n $.
	\end{Lemma}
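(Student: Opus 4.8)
The plan is to bound the coefficients of the Euler product for the partition generating function. Fix $x\in(0,1)$. Since a partition of $n$ is the same data as a choice of multiplicity $j_m\ge 0$ of each part size $m\ge1$ with $\sum_{m\ge1} m\,j_m=n$, and every term below is nonnegative (so the rearrangement is unconditionally valid), one expands
\[
\sum_{n=0}^{\infty} p(n)\,x^{n}=\prod_{m=1}^{\infty}\Bigl(\sum_{j\ge0}x^{mj}\Bigr)=\prod_{m=1}^{\infty}\frac{1}{1-x^{m}}.
\]
The infinite product converges, because $\prod_{m}(1-x^{m})^{-1}$ converges exactly when $\sum_{m}\frac{x^{m}}{1-x^{m}}<\infty$, and $\sum_{m\ge1}\frac{x^{m}}{1-x^{m}}\le\frac{1}{1-x}\sum_{m\ge1}x^{m}<\infty$. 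Denote its value by $C_{x}\in(0,\infty)$.

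From $\sum_{n}p(n)x^{n}=C_{x}$ and positivity of all summands we get $p(n)\,x^{n}\le C_{x}$, i.e. $p(n)\le C_{x}\,x^{-n}$ for every $n$. Now, given $k>1$, choose $x\in(1/k,1)$ (possible since $1/k<1$), and set $q=\tfrac{1}{kx}\in(0,1)$. Then $p(n)\le C_{x}x^{-n}=C_{x}\,q^{n}k^{n}$, and since $C_{x}q^{n}\to0$ there is $n_{0}$ with $C_{x}q^{n}\le1$ for all $n\ge n_{0}$; hence $p(n)\le k^{n}$ for all $n\ge n_{0}$, which is the claim.

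There is essentially no obstacle here: the only points needing (routine) care are the termwise expansion of the infinite product, justified by nonnegativity, and the elementary convergence estimate for $\prod_{m}(1-x^{m})^{-1}$. (An even cruder route, avoiding the product entirely, would be to note that partitions inject into compositions to get $p(n)\le2^{n-1}$, but that only yields the bound for $k\ge2$, so the generating-function argument is the one to use for all $k>1$.)
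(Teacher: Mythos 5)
Your proof is correct and rests on the same key idea as the paper's: the Euler-product generating function $\sum_n p(n)x^n = \prod_{m\geq 1}(1-x^m)^{-1}$ converges in the unit disk, which forces $p(n)$ to grow subexponentially. The only cosmetic difference is that the paper invokes the radius-of-convergence criterion to get $\limsup_n \sqrt[n]{p(n)} \leq 1$ directly, whereas you extract the explicit coefficient bound $p(n) \leq C_x x^{-n}$ from nonnegativity of the terms and then optimize over $x$; both are routine and equivalent.
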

	
	\begin{proof}
			We look at the function $ f(q)=\frac{1}{\prod_{j=1}^{\infty}(1-q^j) } $. Since it is analytic in the unit disk its Taylor expansion converges there, but as its Taylor expansion is $ f(q)=\sum_{n=0}^{\infty} p(n) q^n $ we conclude that $ \limsup_{n \to \infty} \sqrt[n]{p(n)} \leq 1 $.
			Hence, for any $ k>1 $, for all except finitely many values of $n$ we have $ p(n) \leq k^n $.
	\end{proof}
	
	\end{appendices}
	
	\printbibliography[heading=bibliography]
	
\end{document}